\newtheorem{cor}[subsection]{Corollary}
\newtheorem{lem}[subsection]{Lemma}
\newtheorem{prop}[subsection]{Proposition}
\newtheorem{conj}[subsection]{Conjecture}
\newtheorem{thm}[subsection]{Theorem}
\newtheorem{rem}[subsection]{Remark}
\theoremstyle{definition}
\theoremstyle{remark}
\newcommand{\nc}{\newcommand}
\nc{\renc}{\renewcommand} \nc{\ssec}{\subsection}
\nc{\sssec}{\subsubsection} \nc{\on}{\operatorname}
\nc\ol{\overline} \nc\ul{\underline} \nc\wt{\widetilde}
\nc\tboxtimes{\wt{\boxtimes}} \nc{\alp}{\alpha}
\nc{\ZZ}{{\mathbb Z}} \nc{\NN}{{\mathbb N}} \nc{\CC}{{\mathbb C}}
\nc{\OO}{{\mathbb O}} \renc{\SS}{{\mathbb S}} \nc{\DD}{{\mathbb
D}}
\nc{\Fq}{{\mathbb F}_q} \nc{\Fqb}{\ol{{\mathbb F}_q}}
\nc{\Ql}{\ol{{\mathbb Q}_\ell}} \nc{\id}{\text{id}} \nc\X{\mathcal
X}
\nc{\Hom}{\on{Hom}} \nc{\Lie}{\on{Lie}} \nc{\Loc}{\on{Loc}}
\nc{\Pic}{\on{Pic}} \nc{\Bun}{\on{Bun}} \nc{\IC}{\on{IC}}
\nc{\Aut}{\on{Aut}} \nc{\rk}{\on{rk}} \nc{\Sh}{\on{Sh}}
\nc{\Perv}{\on{Perv}} \nc{\pos}{{\on{pos}}} \nc{\Conv}{\on{Conv}}
\nc{\Sph}{\on{Sph}} \nc{\Sym}{\on{Sym}}
\nc{\BunBb}{\overline{\Bun}_B} \nc{\Buno}{\overset{o}{\Bun}}
\nc{\BunPb}{{\overline{\Bun}_P}}
\nc{\BunBM}{\overline{\Bun}_{B(M)}}
\nc{\BunPbw}{{\widetilde{\Bun}_P}}
\nc{\BunBP}{\widetilde{\Bun}_{B,P}} \nc{\GUb}{\overline{G/U}}
\nc{\GUPb}{\overline{G/U(P)}}
\nc{\iso}{{\stackrel{\sim}{\longrightarrow}}}
\nc{\Hhom}{\underline{\on{Hom}}} \nc\syminfty{\on{Sym}^{\infty}}
\nc\lal{\ol{\lambda}} \nc\xl{\ol{x}} \nc\thl{\ol{\theta}}
\nc\nul{\ol{\nu}} \nc\mul{\ol{\mu}} \nc\Sum\Sigma
\nc{\oX}{\overset{o}{X}{}}
\nc{\M}{{\mathcal M}} \nc{\N}{{\mathcal N}} \nc{\F}{{\mathcal F}}
\nc{\D}{{\mathcal D}} \nc{\Q}{{\mathcal Q}} \nc{\Y}{{\mathcal Y}}
\nc{\G}{{\mathcal G}} \nc{\E}{{\mathcal E}} \nc{\CalC}{{\mathcal
C}}
\nc\Dh{\widehat{\D}}
\nc{\C}{{\mathcal C}} \nc{\K}{{\mathcal K}}
\renewcommand{\H}{{\mathcal H}}
\nc{\T}{{\mathcal T}} \nc{\V}{{\mathcal V}} \renc{\P}{{\mathcal
P}} \nc{\A}{{\mathcal A}} \nc{\B}{{\mathcal B}} \nc{\U}{{\mathcal
U}}
\nc{\Gr}{\on{Gr}}
\nc{\frn}{{\check{\mathfrak u}(P)}}
\nc\f{{\mathfrak f}}
\nc{\q}{{\mathfrak q}} \nc{\p}{{\mathfrak p}} \nc{\s}{{\mathfrak
s}} \nc\w{\text{w}}
\nc\Spec{\on{Spec}} \nc\Mod{\on{Mod}}
\nc{\tw}{\widetilde{\mathfrak t}} \nc{\pw}{\widetilde{\mathfrak
p}} \nc{\qw}{\widetilde{\mathfrak q}} \nc{\jw}{\widetilde j}
\nc{\grb}{\overline{\Gr}} \nc{\I}{\mathcal I}
\nc{\lambdach}{{\check\lambda}} \nc{\Lambdach}{{\check\Lambda}{}}
\nc{\much}{{\check\mu}} \nc{\omegach}{{\check\omega}}
\nc{\nuch}{{\check\nu}} \nc{\etach}{{\check\eta}}
\nc{\alphach}{{\check\alpha}} \nc{\betach}{{\check\beta}}
\nc{\rhoch}{{\check\rho}} \nc{\ch}{{\check h}}
\nc{\Hb}{\overline{\H}}
\nc{\BA}{{\mathbb{A}}} \nc{\BC}{{\mathbb{C}}}
\nc{\BM}{{\mathbb{M}}} \nc{\BN}{{\mathbb{N}}}
\nc{\BP}{{\mathbb{P}}} \nc{\BR}{{\mathbb{R}}}
\nc{\BZ}{{\mathbb{Z}}} \nc{\BS}{{\mathbb{S}}}
\nc{\CA}{{\mathcal{A}}} \nc{\CB}{{\mathcal{B}}}
\nc{\CE}{{\mathcal{E}}} \nc{\CF}{{\mathcal{F}}}
\nc{\CG}{{\mathcal{G}}} \nc{\CH}{{\mathcal{H}}}
\nc{\CI}{{\mathcal{I}}} \nc{\CL}{{\mathcal{L}}}
\nc{\CM}{{\mathcal{M}}} \nc{\CN}{{\mathcal{N}}}
\nc{\CO}{{\mathcal{O}}} \nc{\CP}{{\mathcal{P}}}
\nc{\CQ}{{\mathcal{Q}}} \nc{\CR}{{\mathcal{R}}}
\nc{\CS}{{\mathcal{S}}} \nc{\CT}{{\mathcal{T}}}
\nc{\CU}{{\mathcal{U}}} \nc{\CV}{{\mathcal{V}}}
\nc{\CW}{{\mathcal{W}}} \nc{\CZ}{{\mathcal{Z}}}
\nc{\cM}{{\check{\mathcal M}}{}} \nc{\csM}{{\check{\mathcal A}}{}}
\nc{\oM}{{\overset{\circ}{\mathcal M}}{}}
\nc{\obM}{{\overset{\circ}{\mathbf M}}{}}
\nc{\oCA}{{\overset{\circ}{\mathcal A}}{}}
\nc{\obA}{{\overset{\circ}{\mathbf A}}{}}
\nc{\ooM}{{\overset{\circ}{M}}{}}
\nc{\osM}{{\overset{\circ}{\mathsf M}}{}}
\nc{\vM}{{\overset{\bullet}{\mathcal M}}{}}
\nc{\nM}{{\underset{\bullet}{\mathcal M}}{}}
\nc{\oD}{{\overset{\circ}{\mathcal D}}{}}
\nc{\obD}{{\overset{\circ}{\mathbf D}}{}}
\nc{\oA}{{\overset{\circ}{\mathbb A}}{}}
\nc{\op}{{\overset{\bullet}{\mathbf p}}{}}
\nc{\cp}{{\overset{\circ}{\mathbf p}}{}}
\nc{\oU}{{\overset{\bullet}{\mathcal U}}{}}
\nc{\oZ}{{\overset{\circ}{\mathcal Z}}{}}
\nc{\ofZ}{{\overset{\circ}{\mathfrak Z}}{}}
\nc{\ff}{{\mathfrak{f}}} \nc{\fv}{{\mathfrak{v}}}
\nc{\fa}{{\mathfrak{a}}} \nc{\fb}{{\mathfrak{b}}}
\nc{\fd}{{\mathfrak{d}}} \nc{\fe}{{\mathfrak{e}}}
\nc{\fg}{{\mathfrak{g}}} \nc{\fgl}{{\mathfrak{gl}}}
\nc{\fh}{{\mathfrak{h}}} \nc{\fri}{{\mathfrak{i}}}
\nc{\fj}{{\mathfrak{j}}} \nc{\fk}{{\mathfrak{k}}}
\nc{\fm}{{\mathfrak{m}}} \nc{\fn}{{\mathfrak{n}}}
\nc{\ft}{{\mathfrak{t}}} \nc{\fu}{{\mathfrak{u}}}
\nc{\fw}{{\mathfrak{w}}} \nc{\fz}{{\mathfrak{z}}}
\nc{\fp}{{\mathfrak{p}}} \nc{\frr}{{\mathfrak{r}}}
\nc{\fs}{{\mathfrak{s}}} \nc{\fsl}{{\mathfrak{sl}}}
\nc{\hsl}{{\widehat{\mathfrak{sl}}}}
\nc{\hgl}{{\widehat{\mathfrak{gl}}}}
\nc{\hg}{{\widehat{\mathfrak{g}}}}
\nc{\chg}{{\widehat{\mathfrak{g}}}{}^\vee}
\nc{\hn}{{\widehat{\mathfrak{n}}}}
\nc{\chn}{{\widehat{\mathfrak{n}}}{}^\vee}
\nc{\fA}{{\mathfrak{A}}} \nc{\fB}{{\mathfrak{B}}}
\nc{\fD}{{\mathfrak{D}}} \nc{\fE}{{\mathfrak{E}}}
\nc{\fF}{{\mathfrak{F}}} \nc{\fG}{{\mathfrak{G}}} \nc{\fH}{{\mathfrak{H}}}
\nc{\fI}{{\mathfrak{I}}} \nc{\fJ}{{\mathfrak{J}}}
\nc{\fK}{{\mathfrak{K}}} \nc{\fL}{{\mathfrak{L}}}
\nc{\fM}{{\mathfrak{M}}} \nc{\fN}{{\mathfrak{N}}}
\nc{\frP}{{\mathfrak{P}}} \nc{\fQ}{{\mathfrak{Q}}}
\nc{\fT}{{\mathfrak{T}}} \nc{\fU}{{\mathfrak{U}}}
\nc{\fV}{{\mathfrak{V}}} \nc{\fW}{{\mathfrak{W}}}
\nc{\fX}{{\mathfrak{X}}} \nc{\fY}{{\mathfrak{Y}}}
\nc{\fZ}{{\mathfrak{Z}}}
\nc{\ba}{{\mathbf{a}}}
\nc{\bb}{{\mathbf{b}}} \nc{\bc}{{\mathbf{c}}}
\nc{\be}{{\mathbf{e}}} \nc{\bj}{{\mathbf{j}}}
\nc{\bn}{{\mathbf{n}}} \nc{\bp}{{\mathbf{p}}}
\nc{\bq}{{\mathbf{q}}} \nc{\bfr}{{\mathbf{r}}}
\nc{\bfu}{{\mathbf{u}}} \nc{\bv}{{\mathbf{v}}}
\nc{\bx}{{\mathbf{x}}} \nc{\by}{{\mathbf{y}}}
\nc{\bw}{{\mathbf{w}}} \nc{\bA}{{\mathbf{A}}}
\nc{\bB}{{\mathbf{B}}} \nc{\bC}{{\mathbf{C}}}
\nc{\bD}{{\mathbf{D}}} \nc{\bF}{{\mathbf{F}}}
\nc{\bH}{{\mathbf{H}}} \nc{\bK}{{\mathbf{K}}}
\nc{\bM}{{\mathbf{M}}} \nc{\bN}{{\mathbf{N}}}
\nc{\bO}{{\mathbf{O}}} \nc{\bS}{{\mathbf{S}}}
\nc{\bV}{{\mathbf{V}}} \nc{\bW}{{\mathbf{W}}}
\nc{\bX}{{\mathbf{X}}}
\nc{\bY}{{\mathbf{Y}}} \nc{\bP}{{\mathbf{P}}}
\nc{\bZ}{{\mathbf{Z}}} \nc{\bh}{{\mathbf{h}}}
\nc{\sA}{{\mathsf{A}}} \nc{\sB}{{\mathsf{B}}}
\nc{\sC}{{\mathsf{C}}} \nc{\sD}{{\mathsf{D}}}
\nc{\sE}{{\mathsf{E}}} \nc{\sF}{{\mathsf{F}}}
\nc{\sK}{{\mathsf{K}}} \nc{\sL}{{\mathsf{L}}}
\nc{\sM}{{\mathsf{M}}} \nc{\sO}{{\mathsf{O}}}
\nc{\sQ}{{\mathsf{Q}}} \nc{\sP}{{\mathsf{P}}}
\nc{\sT}{{\mathsf{T}}} \nc{\sZ}{{\mathsf{Z}}}
\nc{\sV}{{\mathsf{V}}}
\nc{\sfp}{{\mathsf{p}}} \nc{\sr}{{\mathsf{r}}}
\nc{\st}{{\mathsf{t}}} \nc{\sfb}{{\mathsf{b}}}
\nc{\sfc}{{\mathsf{c}}} \nc{\sd}{{\mathsf{d}}}
\nc{\sz}{{\mathsf{z}}}
\nc{\BK}{{\bar{K}}}
\nc{\tA}{{\widetilde{\mathbf{A}}}}
\nc{\tB}{{\widetilde{\mathcal{B}}}}
\nc{\tg}{{\widetilde{\mathfrak{g}}}} \nc{\tG}{{\widetilde{G}}}
\nc{\TM}{{\widetilde{\mathbb{M}}}{}}
\nc{\tO}{{\widetilde{\mathsf{O}}}{}}
\nc{\tU}{{\widetilde{\mathfrak{U}}}{}} \nc{\TZ}{{\tilde{Z}}}
\nc{\tx}{{\tilde{x}}} \nc{\tbv}{{\tilde{\bv}}}
\nc{\tfP}{{\widetilde{\mathfrak{P}}}{}} \nc{\tz}{{\tilde{\zeta}}}
\nc{\tmu}{{\tilde{\mu}}}
\nc{\urho}{\underline{\rho}} \nc{\uB}{\underline{B}}
\nc{\uC}{{\underline{\mathbb{C}}}} \nc{\ui}{\underline{i}}
\nc{\uj}{\underline{j}} \nc{\ofP}{{\overline{\mathfrak{P}}}}
\nc{\oB}{{\overline{\mathcal{B}}}}
\nc{\og}{{\overline{\mathfrak{g}}}} \nc{\oI}{{\overline{I}}}
\nc{\eps}{\varepsilon} \nc{\hrho}{{\hat{\rho}}}
\nc{\blambda}{{\boldsymbol{\lambda}}}
\nc{\one}{{\mathbf{1}}} \nc{\two}{{\mathbf{t}}}
\nc{\Rep}{{\mathop{\operatorname{\rm Rep}}}}
\nc{\Tot}{{\mathop{\operatorname{\rm Tot}}}}
\nc{\Ker}{{\mathop{\operatorname{\rm Ker}}}}
\nc{\Hilb}{{\mathop{\operatorname{\rm Hilb}}}}
\nc{\End}{{\mathop{\operatorname{\rm End}}}}
\nc{\Ext}{{\mathop{\operatorname{\rm Ext}}}}
\nc{\CHom}{{\mathop{\operatorname{{\mathcal{H}}\it om}}}}
\nc{\GL}{{\mathop{\operatorname{\rm GL}}}}
\nc{\gr}{{\mathop{\operatorname{\rm gr}}}}
\nc{\Id}{{\mathop{\operatorname{\rm Id}}}}
\nc{\defi}{{\mathop{\operatorname{\rm def}}}}
\nc{\length}{{\mathop{\operatorname{\rm length}}}}
\nc{\supp}{{\mathop{\operatorname{\rm supp}}}}
\nc{\Cliff}{{\mathsf{Cliff}}}
\nc{\Fl}{{\mathsf{Fl}}} \nc{\Fib}{{\mathsf{Fib}}}
\nc{\Coh}{{\mathsf{Coh}}} \nc{\FCoh}{{\mathsf{FCoh}}}
\nc{\reg}{{\text{\rm reg}}}
\nc{\cplus}{{\mathbf{C}_+}} \nc{\cminus}{{\mathbf{C}_-}}
\nc{\cthree}{{\mathbf{C}_*}} \nc{\Qbar}{{\bar{Q}}}
\nc{\bOmega}{{\overline{\Omega}}}
\nc{\seq}[1]{\stackrel{#1}{\sim}}
\nc{\aff}{\operatorname{aff}}
\def\dsp{\displaystyle}
\begin{document}

\author{Boris Feigin, Michael Finkelberg, Andrei Negut and Leonid Rybnikov}
\title
{Yangians and cohomology rings of Laumon spaces}

\dedicatory{To our friend Sasha Shen on his 50th birthday}




\address{{\it Address}:\newline
B.F.: Landau Institute for Theoretical Physics, Kosygina st 2,
Moscow 117940, Russia \newline
M.F.: IMU, IITP, and
State University Higher School of Economics, \newline
Department of Mathematics, \newline
20 Myasnitskaya st,
Moscow 101000, Russia \newline
A.N.: Simion Stoilow Institute of Mathematics of the Romanian Academy,
Calea Grivitei nr. 21, Bucuresti, 010702, Romania \newline
L.R.: Institute for the Information Transmission Problems and \newline
State University Higher School of Economics, \newline
Department of Mathematics, \newline
20 Myasnitskaya st, Moscow 101000, Russia}

\email{\newline bfeigin@gmail.com, fnklberg@gmail.com,
andrei.negut@gmail.com, leo.rybnikov@gmail.com}

\begin{abstract}
Laumon moduli spaces are certain smooth closures of the moduli spaces of
maps from the projective line to the flag variety of $GL_n$. We construct
the action of the Yangian of $\fsl_n$ in the cohomology of Laumon spaces by
certain natural correspondences. We construct the action of the affine
Yangian (two-parametric deformation of the universal enveloping algebra of the
universal central extension of
$\fsl_n[s^{\pm1},t]$) in the cohomology
of the affine version of Laumon spaces. We compute the matrix coefficients
of the generators of the affine Yangian in the fixed point basis of cohomology.
This basis is an affine analogue of the Gelfand-Tsetlin basis.
The affine analogue of the Gelfand-Tsetlin algebra surjects onto the
equivariant cohomology rings of the affine Laumon spaces. The cohomology ring
of the moduli space $\fM_{n,d}$ of torsion free sheaves on the plane, of rank
$n$ and second Chern class $d$, trivialized at infinity, is naturally embedded
into the cohomology ring of certain affine Laumon space. It is the image of
the center $Z$ of the Yangian of $\fgl_n$ naturally embedded into the affine
Yangian. In particular, the first Chern class of the determinant line bundle
on $\fM_{n,d}$ is the image of a noncommutative power sum in $Z$.
\end{abstract}
\maketitle

\section{Introduction}
\subsection{Laumon spaces and Yangians}
This note is a sequel to~\cite{fr}.
The moduli spaces $\fQ_{\ul{d}}$ were introduced by G.~Laumon in~\cite{la1}
and~\cite{la2}. They are certain partial compactifications
of the moduli spaces of degree
$\ul{d}$ based maps from $\BP^1$ to the flag variety $\CB_n$ of $GL_n$.
In~\cite{fr} we have studied the equivariant cohomology ring
$H^\bullet_{\widetilde{T}\times\BC^*}(\fQ_{\ul{d}})$ where $\widetilde T$
is a Cartan torus of $GL_n$ acting naturally on the target $\CB_n$,
and $\BC^*$ acts as ``loop rotations'' on the source $\BP^1$. The method
of~\cite{fr} was to introduce an action of $U(\fgl_n)$ on
$V=\bigoplus_{\ul{d}}H^\bullet_{\widetilde{T}\times\BC^*}(\fQ_{\ul{d}})
\otimes_{H^\bullet_{\widetilde{T}\times\BC^*}(pt)}\on{Frac}
(H^\bullet_{\widetilde{T}\times\BC^*}(pt))$ by certain natural
correspondences, and then to realize the
cohomology ring $H^\bullet_{\widetilde{T}\times\BC^*}(\fQ_{\ul{d}})$
as a certain quotient of the Gelfand-Tsetlin subalgebra
$\fG\subset U(\fgl_n)$.

In this note we adopt the following approach to the Gelfand-Tsetlin subalgebra
going back to I.~Cherednik. Namely, $\fG$ is the image of the maximal
commutative subalgebra $\fA$ of the Yangian
$Y(\fgl_n)$ (Gelfand-Tsetlin subalgebra)
under the evaluation homomorphism to $U(\fgl_n)$
(see~\cite{mbook}). Composing the evaluation homomorphism
$Y(\fgl_n)$ to $U(\fgl_n)$ with the action of $U(\fgl_n)$ on $V$ we obtain
an action of $Y(\fgl_n)$ on $V$. The main observation of this note is that
the ``new Drinfeld generators''~\cite{d} of $Y(\fsl_n)\subset Y(\fgl_n)$
act on $V$ by natural correspondences (Theorem~\ref{var}).
In fact they are very similar to
the correspondences used by M.~Varagnolo~\cite{v} to construct the action
of Yangians in the equivariant cohomology of quiver varieties.

\subsection{Affine Laumon spaces and affine Gelfand-Tsetlin bases}
There is an affine version of the Laumon spaces, namely the moduli spaces
$\CP_{\ul{d}}$ of parabolic sheaves on $\BP^1\times\BP^1$, see~\cite{fgk}.
The similar correspondences give rise to the action of the affine Yangian
$\widehat Y$ (two-parametric deformation of the universal enveloping algebra
of the universal central extension of $\fsl_n[s^{\pm1},t]$,
see~\cite{g}) on the localized equivariant cohomology
$M=\bigoplus_{\ul{d}}H^\bullet_{\widetilde{T}\times\BC^*\times\BC^*}
(\CP_{\ul{d}})
\otimes_{H^\bullet_{\widetilde{T}\times\BC^*\times\BC^*}(pt)}\on{Frac}
(H^\bullet_{\widetilde{T}\times\BC^*\times\BC^*}(pt))$ where the second
copy of $\BC^*$ acts by the loop rotation on the second copy of $\BP^1$
(Theorem~\ref{vara}). We compute explicitly the action of Drinfeld generators
of $\widehat Y$ in the fixed point basis of $M$ (Theorem~\ref{affei}).

Since the fixed point basis of $V$ corresponds to the Gelfand-Tsetlin basis
of the universal Verma module over $U(\fgl_n)$, we propose to call the
fixed point basis of $M$ the {\em affine Gelfand-Tsetlin basis}.
In particular, Conjecture~\ref{kuzn} asserts that $M$ is isomorphic to the
universal Verma module over $U(\widehat{\fgl}_n)$. Moreover, we expect
that the specialization of the affine Gelfand-Tsetlin basis gives rise
to a basis in the integrable $\widehat{\fgl}_n$-modules (which we also
propose to call the affine Gelfand-Tsetlin basis), see Conjecture~\ref{hypot}.
It seems likely that applying the Schur-Weyl functor of~\cite{g} to these
integrable modules and then going to the limit $n\to\infty$ one obtains the 
``tableaux representations'' (see~\cite{s})
of the trigonometric Cherednik algebra of type $A$.
The set of affine Gelfand-Tsetlin patterns has a structure of
$\widehat{\fsl}_n$-crystal of the integrable $\widehat{\fgl}_n$-module
(Theorem~\ref{okun}),
equivalent to that of cylindric plane partitions~\cite{t}.
We expect that the action of $\widehat Y$ on the
integrable $\widehat{\fgl}_n$-modules coincides with D.~Uglov's Yangian
action~\cite{u}.

\subsection{Cohomology ring of the Giesecker moduli space}
We prove that the maximal commutative subalgebra
of Cartan currents $\fA_{\on{aff}}\subset\widehat Y$
(the affine Gelfand-Tsetlin algebra) surjects
onto the cohomology ring of $\CP_{\ul{d}}$ (Theorem~\ref{feiryb}).
Furthermore, let $\fM_{n,d}$ denote the moduli space of torsion free sheaves
of rank $n$ and second Chern class
$d$, trivialized at infinity. The equivariant cohomology ring
$H^\bullet_{\widetilde{T}\times\BC^*\times\BC^*}(\fM_{n,d})$ is naturally
a subring of
$H^\bullet_{\widetilde{T}\times\BC^*\times\BC^*}(\CP_{d,\ldots,d})$.
There is a natural embedding $Y(\fgl_n)\hookrightarrow\widehat Y$ which
realizes the center $ZY(\fgl_n)$ as a subalgebra of $\fA_{\on{aff}}$.
This subalgebra surjects onto the cohomology ring
$H^\bullet_{\widetilde{T}\times\BC^*\times\BC^*}(\fM_{n,d})\subset
H^\bullet_{\widetilde{T}\times\BC^*\times\BC^*}(\CP_{d,\ldots,d})$.
In particular, the first Chern class of the determinant line bundle
$\D_0$ on $\fM_{n,d}$ is expressed as a certain noncommutative symmetric
function (a power sum of the second kind, see~\cite{gel})
$\Phi\in ZY(\fgl_n)$ (Theorem~\ref{xvi}).

Our results are only proved when $n>2$; however we expect them to hold for
$n=2$ as well, and it is instructive to compare them with the known results
for $n=1$. In this case $\fM_{n,d}$ is the Hilbert scheme
$\on{Hilb}^d(\BA^2)$. The first Chern class of the determinant line bundle
on $\on{Hilb}^d(\BA^2)$ was computed by M.~Lehn as a certain infinite cubic
expression (Calogero-Sutherland operator) of the
generators of the Heisenberg algebra acting by correspondences between
Hilbert schemes. In our case the role of the Heisenberg algebra is played
by $U(\widehat{\fgl}_n)$, and we were unable to express $c_1(\D_0)$ in terms
of $U(\widehat{\fgl}_n)$, but there is an explicit formula for it in terms
of $ZY(\fgl_n)$.

Finally, let us mention a trigonometric version of our note where the (affine)
Yangian is replaced with the (toroidal) affine quantum group,
and the equivariant cohomology is
replaced with the equivariant $K$-theory. This is the subject of~\cite{ts}.

\subsection{Acknowledgments}
We are obliged to A.~I.~Molev for the explanations about the Yangian
$Y(\fgl_n)$, to N.~Guay for the explanations about his affine Yangian
$\widehat Y$, and to A.~Okounkov for bringing the
references~\cite{bi},~\cite{eo},~\cite{t} to our attention.
M.~F. is grateful to A.~Braverman and A.~Kuznetsov who have been
teaching him about Laumon spaces for many years. In particular, the
construction of the transversal section $s$ in~\ref{aku} is due to
A.~Kuznetsov. Also, the assignment to describe the cohomology ring of
$\fM_{n,d}$ was given to M.F. by A.~Braverman in 2004. Thanks are due to
A.~Tsymbaliuk for the careful reading of the first draft of this note and
spotting several mistakes. We would like to thank the referee for many useful
suggestions.
B.~F. was partially supported by the grants
RFBR 08-01-00720, RFBR 05-01-01934, and NSh-6358.2006.2. M.~F. was partially
supported by the RFBR grant 09-01-00242, the Ministry of Education and
Science of Russian Federation, grant No. 2010-1.3.1-111-017-029,
and the AG Laboratory HSE, RF government grant, ag. 11.G34.31.0023.
The work of L.~R. was partially supported
by  RFBR grants 07-01-92214-CNRSL-a, 05-01-02805-CNRSL-a, 09-01-00242, 
the Science Foundation of the
SU-HSE grant 10-01-0078, and the Ministry of Education and
Science of Russian Federation, grant No. 2010-1.3.1-111-017-029.
He gratefully acknowledges the support from Deligne 2004 Balzan prize in
mathematics. A.~N. would like to thank the Institut des Hautes Etudes
Scientifiques for their hospitality during the time when this paper was
written, and LEA Math-Mode for their support.

\section{Laumon spaces and $\fsl_n$-Yangians}
\label{odin}

\subsection{Laumon spaces}
We recall the setup of ~\cite{fr}. Let $\bC$ be a smooth
projective curve of genus zero. We fix a coordinate $z$ on $\bC$,
and consider the action of $\BC^*$ on $\bC$ such that
$v(z)=v^{-2}z$. We have $\bC^{\BC^*}=\{0,\infty\}$.

We consider an $n$-dimensional vector space $W$ with a basis
$w_1,\ldots,w_n$. This defines a Cartan torus $T\subset
G=GL_n\subset Aut(W)$. We also consider its $2^n$-fold cover,
the bigger torus $\widetilde{T}$, acting on $W$ as follows: for
$\widetilde{T}\ni\ul{t}=(t_1,\ldots,t_n)$ we have
$\ul{t}(w_i)=t_i^2w_i$. We denote by $\CB$ the flag variety of
$G$.

Given an $(n-1)$-tuple of nonnegative integers
$\ul{d}=(d_1,\ldots,d_{n-1})$, we consider the Laumon's
quasiflags' space $\CQ_{\ul{d}}$, see ~\cite{la2}, ~4.2. It is the
moduli space of flags of locally free subsheaves
$$0\subset\CW_1\subset\ldots\subset\CW_{n-1}\subset\CW=W\otimes\CO_\bC$$
such that $\on{rank}(\CW_k)=k$, and $\deg(\CW_k)=-d_k$.

It is known to be a smooth projective variety of dimension
$2d_1+\ldots+2d_{n-1}+\dim\CB$, see ~\cite{la1}, ~2.10.

$\fQ_{\ul{d}}\subset\CQ_{\ul{d}}$ (quasiflags based at
$\infty\in\bC$) formed by the flags
$$0\subset\CW_1\subset\ldots\subset\CW_{n-1}\subset\CW=W\otimes\CO_\bC$$
such that $\CW_i\subset\CW$ is a vector subbundle in a
neighbourhood of $\infty\in\bC$, and the fiber of $\CW_i$ at
$\infty$ equals the span $\langle w_1,\ldots,w_i\rangle\subset W$.

It is known to be a smooth quasiprojective variety of dimension
$2d_1+\ldots+2d_{n-1}$.

\subsection{Fixed points}
\label{fixed points} The group $G\times\BC^*$ acts naturally on
$\CQ_{\ul{d}}$, and the group $\widetilde{T}\times\BC^*$ acts
naturally on $\fQ_{\ul{d}}$. The set of fixed points of
$\widetilde{T}\times\BC^*$ on $\fQ_{\ul{d}}$ is finite; we recall
its description from ~\cite{fk}, ~2.11.

Let $\widetilde{\ul{d}}$ be a collection of nonnegative integers
$(d_{ij}),\ i\geq j$, such that $d_i=\sum_{j=1}^id_{ij}$, and for
$i\geq k\geq j$ we have $d_{kj}\geq d_{ij}$. Abusing notation we
denote by $\widetilde{\ul{d}}$ the corresponding
$\widetilde{T}\times\BC^*$-fixed point in $\fQ_{\ul{d}}$:

$\CW_1=\CO_\bC(-d_{11}\cdot0)w_1,$

$\CW_2=\CO_\bC(-d_{21}\cdot0)w_1\oplus\CO_\bC(-d_{22}\cdot0)w_2,$

$\ldots\ \ldots\ \ldots\ ,$

$\CW_{n-1}=\CO_\bC(-d_{n-1,1}\cdot0)w_1\oplus\CO_\bC(-d_{n-1,2}\cdot0)w_2
\oplus\ldots\oplus\CO_\bC(-d_{n-1,n-1}\cdot0)w_{n-1}.$

\subsection{Correspondences}
\label{classic}
For $i\in\{1,\ldots,n-1\}$, and $\ul{d}=(d_1,\ldots,d_{n-1})$, we
set $\ul{d}+i:=(d_1,\ldots,d_i+1,\ldots,d_{n-1})$. We have a
correspondence $\CE_{\ul{d},i}\subset\CQ_{\ul{d}}\times
\CQ_{\ul{d}+i}$ formed by the pairs $(\CW_\bullet,\CW'_\bullet)$
such that for $j\ne i$ we have $\CW_j=\CW'_j$, and
$\CW'_i\subset\CW_i$, see ~\cite{fk}, ~3.1. In other words,
$\CE_{\ul{d},i}$ is the moduli space of flags of locally free
sheaves
$$0\subset\CW_1\subset\ldots\CW_{i-1}\subset\CW'_i\subset\CW_i\subset
\CW_{i+1}\ldots\subset\CW_{n-1}\subset\CW$$ such that
$\on{rank}(\CW_k)=k$, and $\deg(\CW_k)=-d_k$, while
$\on{rank}(\CW'_i)=i$, and $\deg(\CW'_i)=-d_i-1$.

According to ~\cite{la1}, ~2.10, $\CE_{\ul{d},i}$ is a smooth
projective algebraic variety of dimension
$2d_1+\ldots+2d_{n-1}+\dim\CB+1$.

We denote by $\bp$ (resp. $\bq$) the natural projection
$\CE_{\ul{d},i}\to\CQ_{\ul{d}}$ (resp.
$\CE_{\ul{d},i}\to\CQ_{\ul{d}+i}$). We also have a map $\bfr:\
\CE_{\ul{d},i}\to\bC,$
$$(0\subset\CW_1\subset\ldots\CW_{i-1}\subset\CW'_i\subset\CW_i\subset
\CW_{i+1}\ldots\subset\CW_{n-1}\subset\CW)\mapsto\on{supp}(\CW_i/\CW'_i).$$

The correspondence $\CE_{\ul{d},i}$ comes equipped with a natural
line bundle $\CL_i$ whose fiber at a point
$$(0\subset\CW_1\subset\ldots\CW_{i-1}\subset\CW'_i\subset\CW_i\subset
\CW_{i+1}\ldots\subset\CW_{n-1}\subset\CW)$$ equals
$\Gamma(\bC,\CW_i/\CW'_i)$.

Finally, we have a transposed correspondence
$^\sT\CE_{\ul{d},i}\subset \CQ_{\ul{d}+i}\times\CQ_{\ul{d}}$.

Restricting to $\fQ_{\ul{d}}\subset\CQ_{\ul{d}}$ we obtain the
correspondence
$\fE_{\ul{d},i}\subset\fQ_{\ul{d}}\times\fQ_{\ul{d}+i}$ together
with line bundle $\fL_i$ and the natural maps $\bp:\
\fE_{\ul{d},i}\to\fQ_{\ul{d}},\ \bq:\
\fE_{\ul{d},i}\to\fQ_{\ul{d}+i},\ \bfr:\
\fE_{\ul{d},i}\to\bC-\infty$. We also have a transposed
correspondence $^\sT\fE_{\ul{d},i}\subset
\fQ_{\ul{d}+i}\times\fQ_{\ul{d}}$. It is a smooth quasiprojective
variety of dimension $2d_1+\ldots+2d_{n-1}+1$.

\subsection{Equivariant cohomology}
We denote by ${}'V$ the direct sum of equivariant (complexified)
cohomology:
${}'V=\oplus_{\ul{d}}H^\bullet_{\widetilde{T}\times\BC^*}(\fQ_{\ul{d}})$.
It is a module over
$H^\bullet_{\widetilde{T}\times\BC^*}(pt)=\BC[\ft\oplus\BC]=
\BC[x_1,\ldots,x_n,\hbar]$. Here $\ft\oplus\BC$ is the
Lie algebra of $\widetilde{T}\times\BC^*$. We define $\hbar$ as twice
the positive generator of $H^2_{\BC^*}(pt,\BZ)$. Similarly, we define
$x_i\in H^2_{\widetilde{T}}(pt,\BZ)$ in terms of the corresponding
one-parametric subgroup.
We define $V=\ {}'V\otimes_{H^\bullet_{\widetilde{T}\times\BC^*}(pt)}
\on{Frac}(H^\bullet_{\widetilde{T}\times\BC^*}(pt))$.

We have an evident grading $$V=\oplus_{\ul{d}}V_{\ul{d}},\
V_{\ul{d}}=H^\bullet_{\widetilde{T}\times\BC^*}(\fQ_{\ul{d}})
\otimes_{H^\bullet_{\widetilde{T}\times\BC^*}(pt)}
\on{Frac}(H^\bullet_{\widetilde{T}\times\BC^*}(pt)).$$

\subsection{Universal Verma module}
We denote by
$\fU$ the universal enveloping algebra of $\fgl_n$ over the field
$\BC(\ft\oplus\BC)$. For $1\leq j,k\leq n$ we denote by
$E_{jk}\in\fgl_n\subset\fU$ the usual elementary matrix.
The standard Chevalley generators are expressed as follows:
$$\fe_i:=E_{i+1,i},\
\ff_i:=E_{i,i+1},\ \fh_i:=E_{i+1,i+1}-E_{ii}$$
(note that $\fe_i$ is represented by a {\em lower} triangular matrix).
Note also that $\fU$ is generated by $E_{ii},\ 1\leq i\leq n,\
E_{i,i+1},E_{i+1,i},\ 1\leq i\leq n-1$.
We denote by $\fU_{\leq0}$ the subalgebra of $\fU$ generated by
$E_{ii},\ 1\leq i\leq n,\ E_{i,i+1},\ 1\leq i\leq n-1$.
It acts on the field $\BC(\ft\oplus\BC)$ as follows:
$E_{i,i+1}$ acts trivially for any $1\leq i\leq n-1$, and
$E_{ii}$ acts by multiplication by
$\hbar^{-1}x_i+i-1$. We define the {\em universal Verma module}
$\fV$ over $\fU$ as $\fU\otimes_{\fU_{\leq0}}\BC(\ft\oplus\BC)$.
The universal Verma module $\fV$ is an irreducible $\fU$-module.

\subsection{The action of generators}
\label{operators} The grading and the correspondences
$^\sT\fE_{\ul{d},i},\fE_{\ul{d},i}$ give rise to the following
operators on $V$ (note that though $\bp$ is not proper, $\bp_*$ is
well defined on the localized equivariant cohomology due to the
finiteness of the fixed point sets):

$E_{ii}=\hbar^{-1}x_i+d_{i-1}-d_i+i-1:\ V_{\ul{d}}\to
V_{\ul{d}}$;

$\fh_i=\hbar^{-1}(x_{i+1}-x_i)+2d_i-d_{i-1}-d_{i+1}+1:\ V_{\ul{d}}\to
V_{\ul{d}}$;

$\ff_i=E_{i,i+1}=\bp_*\bq^*:\ V_{\ul{d}}\to V_{\ul{d}-i}$;

$\fe_i=E_{i+1,i}=-\bq_*\bp^*:\
V_{\ul{d}}\to V_{\ul{d}+i}$.

The following theorem is Theorem~2.7 of~\cite{fr}.

\begin{thm}
\label{brav}
The operators $\fe_i=E_{i+1,i},E_{ii},\ff_i=E_{i,i+1}$ on $V$ defined
in ~\ref{operators} satisfy the relations in $\fU$, i.e.
they give rise to the action of $\fU$ on $V$.
There is a unique isomorphism $\Psi$ of $\fU$-modules
$V$ and $\fV$ carrying $1\in H^0_{\widetilde{T}\times\BC^*}(\fQ_0)\subset V$
to the lowest weight vector $1\in\BC(\ft\oplus\BC)\subset\fV$.
\end{thm}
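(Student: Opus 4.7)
The plan is to pass to the fixed-point basis of $V$ via equivariant localization and match the matrix coefficients of the geometric operators with the classical Gelfand--Tsetlin formulas for $\fU$ acting on $\fV$. First, I would invoke equivariant localization: since $\fQ_{\ul{d}}^{\widetilde{T}\times\BC^*}$ is finite, parametrized as in~\ref{fixed points} by tuples $\widetilde{\ul{d}}=(d_{ij})$, the fundamental classes $[\widetilde{\ul{d}}]$ form a basis of $V_{\ul{d}}$ over $\BC(\ft\oplus\BC)$. The combinatorics of these tuples matches that of Gelfand--Tsetlin patterns of the appropriate shape, providing the bijective dictionary with the Gelfand--Tsetlin basis of $\fV$.

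Next, I would analyze the correspondence $\fE_{\ul{d},i}$. Its $\widetilde{T}\times\BC^*$-fixed points are enumerated by pairs $(\widetilde{\ul{d}},\widetilde{\ul{d}}')$ where $\widetilde{\ul{d}}'$ is obtained from $\widetilde{\ul{d}}$ by incrementing a single entry $d_{ij}$ (for some $1\leq j\leq i$) by one: this is forced by the requirement that the subsheaf $\CW_i'\subset\CW_i$ be $\widetilde{T}\times\BC^*$-fixed. At each such fixed point the tangent weights of $\fQ_{\ul{d}}$, $\fQ_{\ul{d}+i}$, and $\fE_{\ul{d},i}$ can be read off from the splittings of the line-bundle summands of $\CW_k/\CW_{k-1}$ near $0\in\bC$. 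Feeding these into the Atiyah--Bott localization formula produces closed-form rational expressions for the matrix coefficients of $\ff_i=\bp_*\bq^*$ and $\fe_i=-\bq_*\bp^*$ in the fixed-point basis, each a product of linear forms in the $x_k$, $\hbar$, and the $d_{kl}$'s.

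Having these explicit matrix coefficients, I would compare them with the Gelfand--Tsetlin matrix coefficients for $\fe_i,\ff_i,\fh_i$ acting on the Gelfand--Tsetlin basis of $\fV$, as recorded in~\cite{mbook}. After a suitable rescaling of $[\widetilde{\ul{d}}]$ by an explicit rational function, the two sets of formulas should agree on the nose. This simultaneously verifies all relations of $\fU$ for the geometric operators (they hold in $\fV$) and produces an explicit $\fU$-linear bijection $\Psi\colon V\iso\fV$. The lowest-weight condition at $\ul{d}=0$ is automatic: by Grothendieck's splitting theorem, $\fQ_0$ reduces to a single point (the constant flag $\langle w_1,\ldots,w_k\rangle\otimes\CO_\bC$), so $1\in V_0$ is a genuine basis vector; it is annihilated by each $\ff_i$ for degree reasons, and the formula of~\ref{operators} specialized to $d_0=\ldots=d_{n-1}=0$ gives $E_{ii}\cdot 1=(\hbar^{-1}x_i+i-1)\cdot 1$, matching the defining relations of the canonical generator of $\fV$. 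Uniqueness of $\Psi$ then follows from the universal property of $\fV$ combined with the irreducibility of $\fV$ asserted in the excerpt.

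The main obstacle is the localization computation in the second step: correctly enumerating the fixed points of $\fE_{\ul{d},i}$, describing the tangent weights of $\fQ_{\ul{d}}$ and of the fibers of $\bp$ and $\bq$, and massaging the resulting products of linear forms into the standard Gelfand--Tsetlin shape. The computation is purely local at the fixed points, but the bookkeeping --- in particular pinning down the sign that makes $\fe_i=-\bq_*\bp^*$ rather than $+\bq_*\bp^*$, and the telescoping cancellations needed to reach the Gelfand--Tsetlin normal form --- is the technical heart of the argument and is where the specific geometry of the correspondences $\fE_{\ul{d},i}$ enters in an essential way.
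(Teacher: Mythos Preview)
The paper does not actually prove this theorem: immediately before the statement it says ``The following theorem is Theorem~2.10 of~\cite{fr},'' and no argument is given. So there is no proof in the present paper to compare against; the result is imported wholesale from the companion paper~\cite{fr}.

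That said, your outline is consistent with the surrounding evidence. The very next result quoted (Theorem~\ref{feigin}, which is Theorem~2.12 of~\cite{fr}) records exactly the fixed-point matrix coefficients of $\fe_i,\ff_i$ that your localization computation would produce, and asserts that $\Psi$ carries $[\widetilde{\ul{d}}]$ to $(-\hbar)^{|\ul{d}|}\xi_{\widetilde{\ul{d}}}$. This strongly suggests that the proof in~\cite{fr} follows precisely the strategy you sketch: compute the matrix coefficients of the geometric operators in the fixed-point basis via Atiyah--Bott, compare with the Gelfand--Tsetlin formulas of~\cite{m}, and read off both the $\fU$-relations and the isomorphism $\Psi$ (with the rescaling you anticipate being the factor $(-\hbar)^{|\ul{d}|}$). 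Your identification of the technical heart --- the tangent-weight bookkeeping on $\fE_{\ul{d},i}$ and the sign in $\fe_i=-\bq_*\bp^*$ --- is on target. If you want to fill in the details yourself rather than cite~\cite{fr}, the tangent-character computations in Section~\ref{torus computations} of the present paper (done there in the affine setting) provide a template for the finite case.
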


\subsection{Gelfand-Tsetlin basis of the universal Verma module}
\label{classical}
We will follow the notations of~\cite{m} on the Gelfand-Tsetlin bases in
representations of $\fgl_n$.
To a collection $\widetilde{\ul{d}}=(d_{ij}),\ n-1\geq i\geq j$ we associate
a {\em Gelfand-Tsetlin pattern}
$\Lambda=\Lambda(\widetilde{\ul{d}}):=(\lambda_{ij}),\ n\geq i\geq j$
as follows:
$\lambda_{nj}:=\hbar^{-1}x_j+j-1,\ n\geq j\geq 1;\
\lambda_{ij}:=\hbar^{-1}x_j+j-1-d_{ij},\ n-1\geq i\geq j\geq1$.
Now we define $\xi_{\widetilde{\ul{d}}}=\xi_\Lambda\in\fV$ by the formulas
(2.9)--(2.11) of {\em loc. cit.} (where $\xi=\xi_0=1\in\fV$).
According to Theorem~2.7 of {\em loc. cit.}, the set
$\{\xi_{\widetilde{\ul{d}}}\}$ (over all collections $\widetilde{\ul{d}}$)
forms a basis of $\fV$.

According to the Thomason localization theorem,
restriction to the $\widetilde{T}\times\BC^*$-fixed
point set induces an isomorphism
$$H^\bullet_{\widetilde{T}\times\BC^*}(\fQ_{\ul{d}})
\otimes_{H^\bullet_{\widetilde{T}\times\BC^*}(pt)}
\on{Frac}(H^\bullet_{\widetilde{T}\times\BC^*}(pt))\iso$$
$$\iso H^\bullet_{\widetilde{T}\times\BC^*}(\fQ_{\ul{d}}^{\widetilde{T}\times\BC^*})
\otimes_{H^\bullet_{\widetilde{T}\times\BC^*}(pt)}
\on{Frac}(H^\bullet_{\widetilde{T}\times\BC^*}(pt))$$

The fundamental cycles $[\widetilde{\ul{d}}]$ of the
$\widetilde{T}\times\BC^*$-fixed points $\widetilde{\ul{d}}$ (see
~\ref{fixed points}) form a basis in
$\oplus_{\ul{d}}H^\bullet_{\widetilde{T}\times\BC^*}
(\fQ_{\ul{d}}^{\widetilde{T}\times\BC^*})
\otimes_{H^\bullet_{\widetilde{T}\times\BC^*}(pt)}\on{Frac}
(H^\bullet_{\widetilde{T}\times\BC^*}(pt))$. The embedding of a point
$\widetilde{\ul{d}}$ into $\fQ_{\ul{d}}$ is a proper morphism, so the
direct image in the equivariant cohomology is well defined, and we will
denote by $[\widetilde{\ul{d}}]\in V_{\ul{d}}$ the direct image of the
fundamental cycle of the point $\widetilde{\ul{d}}$. The set
$\{[\widetilde{\ul{d}}]\}$ forms a basis of $V$.

The following theorem is~Theorem~2.11 and~Proposition~2.9 of~\cite{fr}, cf. also~\cite{ne}~8.2.

\begin{thm}
\label{feigin}
a) The isomorphism $\Psi:\ V\iso\fV$ of Theorem~\ref{brav} takes
$[\widetilde{\ul{d}}]$ to $(-\hbar)^{|\ul{d}|}\xi_{\widetilde{\ul{d}}}$
where $|\ul{d}|=d_1+\ldots+d_{n-1}$.

b) The matrix coefficients of the operators $\fe_i,\ff_i$ in the basis
$\{[\widetilde{\ul{d}}]\}$ are as follows:
$$\fe_{i[\widetilde{\ul{d}},\widetilde{\ul{d}}{}']}=
-\hbar^{-1}\prod_{j\ne k\leq i}(x_j-x_k+(d_{i,k}-d_{i,j})\hbar)^{-1}
\prod_{k\leq i-1}(x_j-x_k+(d_{i-1,k}-d_{i,j})\hbar)$$ if
$d'_{i,j}=d_{i,j}+1$ for certain $j\leq i$;

$$\ff_{i[\widetilde{\ul{d}},\widetilde{\ul{d}}{}']}=
\hbar^{-1}\prod_{j\ne k\leq i}(x_k-x_j+(d_{i,j}-d_{i,k})\hbar)^{-1}
\prod_{k\leq i+1}(x_k-x_j+(d_{i,j}-d_{i+1,k})\hbar)$$ if
$d'_{i,j}=d_{i,j}-1$ for certain $j\leq i$;

All the other matrix coefficients of $\fe_i,\ff_i$ vanish.
\end{thm}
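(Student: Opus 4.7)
The plan is to establish part (b) first by equivariant localization on the correspondence $\fE_{\ul{d},i}$, and then derive part (a) by matching the resulting matrix coefficients against Molev's classical formulas for $\fe_i$, $\ff_i$ in the Gelfand--Tsetlin basis of $\fV$. Since the fixed point sets are finite, Thomason's theorem provides the fixed-point basis and the Atiyah--Bott formula applies to the proper projections of $\fE_{\ul{d},i}$ to $\fQ_{\ul{d}}$ and $\fQ_{\ul{d}+i}$.

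The first step is to enumerate the $\widetilde T\times\BC^*$-fixed points of $\fE_{\ul{d},i}$. A fixed pair $(\CW_\bullet,\CW'_\bullet)$ is determined by a fixed Laumon flag $\CW_\bullet$ and a torus-stable codimension-one subsheaf $\CW'_i\subset\CW_i$ supported at $0\in\bC$; such $\CW'_i$ simply shrinks one summand $\CO_\bC(-d_{i,j}\cdot 0)w_j$ to $\CO_\bC(-(d_{i,j}+1)\cdot 0)w_j$ for some $j\le i$. Hence over each pair $(\widetilde{\ul{d}},\widetilde{\ul{d}}{}')$ with $d'_{i,j}=d_{i,j}+1$ there is a unique fixed point $z_j$, and the Atiyah--Bott sum collapses to a single term
\[
\fe_{i[\widetilde{\ul{d}},\widetilde{\ul{d}}{}']}=-\frac{e(T_{\widetilde{\ul{d}}}\fQ_{\ul{d}})}{e(T_{z_j}\fE_{\ul{d},i})},
\]
and similarly for $\ff_i$ after passing to the adjoint correspondence.

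The second step is the tangent-weight computation. The tangent space $T_{\widetilde{\ul{d}}}\fQ_{\ul{d}}$ is controlled by an $\Ext^1$-group built from $\CHom(\CW_a/\CW_{a-1},\CW/\CW_b)$ for $a<b$, with the framing condition at $\infty$ truncating the polynomial parts of each direct summand; the surviving weights have the form $(x_k-x_j)+c\hbar$, where $c$ is an explicit difference of the multiplicities $d_{\cdot,\cdot}$ recording the orders of vanishing at $0\in\bC$. An analogous computation shows that $T_{z_j}\fE_{\ul{d},i}$ differs from $T_{\widetilde{\ul{d}}}\fQ_{\ul{d}}$ only by the tangent direction along $\bq$, which contributes the weight $x_j+(j-1-d_{i,j})\hbar$ of the section $\Gamma(\bC,\CW_i/\CW'_i)$. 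Because $\widetilde{\ul{d}}$ and $\widetilde{\ul{d}}{}'$ coincide off the $i$-th row of the Gelfand--Tsetlin pattern, every weight in the ratio indexed by a pair of rows $(a,b)$ with $a\ne i$, $b\ne i$ cancels; what remains is exactly the product over $k\le i$ (numerator) and $k\le i-1$ (denominator) of part (b), and the symmetric statement for $\ff_i$ involving the $(i+1)$-th row.

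Part (a) is then a direct comparison. The isomorphism $\Psi$ of Theorem~\ref{brav} is uniquely characterized by $\fU$-equivariance and $[\ul{0}]\mapsto 1$, while the classical basis $\{\xi_\Lambda\}$ is determined up to scalars by the matrix-coefficient recursion (2.9)--(2.11) of~\cite{m}; setting $\xi_{\widetilde{\ul{d}}}\mapsto(-\hbar)^{-|\ul{d}|}[\widetilde{\ul{d}}]$ makes the two sets of formulas match term by term, forcing the claimed normalization. The main obstacle is the organized tangent-weight bookkeeping of the second step: the $\Ext^1$-computation must be arranged so that the cancellations across rows of the Gelfand--Tsetlin pattern are transparent, leaving visible only the two rows that actually appear in Molev's formulas.
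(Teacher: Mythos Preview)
The paper does not give its own proof of this theorem; it is quoted from~\cite{fr}. However, the paper \emph{does} prove the affine analogue (Theorem~\ref{affeigin}) in section~\ref{310}, and that argument follows exactly your localization strategy: one writes
\[
\fe_i[\widetilde{\ul{d}}]=-\sum_{\widetilde{\ul{d}}{}'}[\widetilde{\ul{d}}{}']\cdot
\frac{\prod_{w\in T_{\widetilde{\ul{d}}}\fQ_{\ul{d}}}w}{\prod_{w\in T_{z_j}\fE_{\ul{d},i}}w}
\]
and evaluates the ratio of tangent Euler classes. So your overall outline for part~(b) is correct and matches the paper's method, and your plan for part~(a) --- compare with Molev's matrix coefficients to pin down the scalar $(-\hbar)^{|\ul d|}$ --- is the natural one.

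The gap is in your second step. Your claim that $T_{z_j}\fE_{\ul{d},i}$ differs from $T_{\widetilde{\ul{d}}}\fQ_{\ul{d}}$ ``only by the tangent direction along $\bq$'' is false for $i>1$. If it held, the ratio of Euler classes would collapse to the reciprocal of a single weight, whereas the stated formula for $\fe_i$ has $i-1$ nontrivial numerator factors and $i-1$ denominator factors on top of the $\hbar^{-1}$. The projection $\bp$ is not smooth at the torus-fixed locus once $\CW_i/\CW_{i-1}$ acquires torsion (which it does whenever some $d_{i-1,k}>d_{ik}$), so there is no short exact sequence splitting $T_{z_j}\fE_{\ul{d},i}$ as $\bp^*T_{\widetilde{\ul{d}}}\fQ_{\ul{d}}$ plus a line. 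What \emph{is} true --- and what the paper computes explicitly in the affine case --- is that the difference of tangent \emph{characters}
$\on{ch}T_{\widetilde{\ul{d}}}\fQ_{\ul{d}}-\on{ch}T_{z_j}\fE_{\ul{d},i}$
is a short alternating sum of monomials (with both signs), and the resulting signed product of the corresponding weights gives the formula in part~(b). You should compute both tangent characters directly (for instance by specializing the affine computation of Proposition~\ref{character to correspondence} and Remark~\ref{tangent character} to $q'=1$) rather than assert a splitting that does not exist. Incidentally, your weight for $\Gamma(\bC,\CW_i/\CW'_i)$ is also off: the paper records it as $-x_j+d_{ij}\hbar$.
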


\subsection{Yangian of $\fsl_n$}
\label{yang fin}
Let $(a_{kl})_{1\leq k,l\leq n-1}=A_{n-1}$ stand for the Cartan matrix of
$\fsl_n$. The Yangian $Y(\fsl_n)$ is the free $\BC[\hbar]$-algebra generated
by $\bx_{k,r}^\pm,\bh_{k,r},\ 1\leq k\leq n-1,\ r\in\BN$, with the following
relations:

\begin{equation}
\label{11}
[\bh_{k,r},\bh_{l,s}]=0,\ [\bh_{k,0},\bx_{l,s}^\pm]=\pm a_{kl}\bx_{l,s}^\pm,
\end{equation}

\begin{equation}
\label{12}
2[\bh_{k,r+1},\bx_{l,s}^\pm]-2[\bh_{k,r},\bx_{l,s+1}^\pm]=
\pm\hbar a_{kl}(\bh_{k,r}\bx_{l,s}^\pm+\bx_{l,s}^\pm\bh_{k,r}),
\end{equation}

\begin{equation}
\label{13}
[\bx^+_{k,r},\bx^-_{l,s}]=\delta_{kl}\bh_{k,r+s},
\end{equation}

\begin{equation}
\label{14}
2[\bx_{k,r+1}^\pm,\bx_{l,s}^\pm]-2[\bx_{k,r}^\pm,\bx_{l,s+1}^\pm]=
\pm\hbar a_{kl}(\bx_{k,r}^\pm\bx_{l,s}^\pm+\bx_{l,s}^\pm\bx_{k,r}^\pm),
\end{equation}

\begin{equation}
\label{15}
[\bx_{k,r}^\pm,[\bx_{k,p}^\pm,\bx_{l,s}^\pm]]+
[\bx_{k,p}^\pm,[\bx_{k,r}^\pm,\bx_{l,s}^\pm]]=0,\
k=l\pm1,\ \forall p,r,s\in\BN.
\end{equation}

For a formal variable $u$ we introduce the generating series
$\bh_k(u):=1+\sum_{r=0}^\infty\bh_{k,r}\hbar^{-r}u^{-r-1};\
\bx_k^\pm(u):=\sum_{r=0}^\infty\bx_{k,r}^\pm\hbar^{-r}u^{-r-1}$. We can then rewrite the equations~(\ref{12},\ref{14}) in the following form

\begin{equation}
\label{12'}
\partial_u\partial_v\bh_k(u)\bx_l^\pm(v)(2u-2v\mp a_{kl})=-\partial_u\partial_v\bx_l^\pm(v)\bh_k(u)(2v-2u\mp a_{kl})
\end{equation}

\begin{equation}
\label{14'}
\partial_u\partial_v\bx_k^\pm(u)\bx_l^\pm(v)(2u-2v\mp a_{kl})=-\partial_u\partial_v\bx_l^\pm(v)\bx_k^\pm(u)(2v-2u\mp a_{kl})
\end{equation}

\subsection{The action of Yangian generators}
\label{yang cor}
For any $0\leq i\leq n$ we will denote by $\ul{\CW}{}_i$ the tautological
$i$-dimensional vector bundle on $\fQ_{\ul{d}}\times\bC$. Let $\pi:\fQ_{\ul{d}}\times (\bC\ \backslash \ \infty) \rightarrow \fQ_{\ul{d}}$ denote the standard projection. Let $q$ stand for the character of $\widetilde{T}\times\BC^*:\
(\ul{t},v)\mapsto v^2$.
We define the line bundle $\CL'_k:=q^{\frac{1-k}{2}}\CL_k$ on the
correspondence $\CE_{\ul{d},k}$, that is $\CL'_k$ and $\CL_k$ are
isomorphic as line bundles but the equivariant structure of $\CL'_k$ is obtained
from the equivariant structure of $\CL_k$ by the twist by the
character $q^{\frac{1-k}{2}}$. Finally, for a vector bundle $\CV$ and a 
formal variable $x$ we denote by $c(\CV,x)$ the Chern polynomial. 

Consider the operators:




$$
\ba_m(u)=u^m\cdot \bp_*(c(\pi_*(\ul{\CW}_m|_{\bC \backslash \infty}), (-u\hbar)^{-1})\cdot\bq^*):\ V_{\ul{d}}\to V_{\ul{d}}[[u^{-1}]][u]
$$

\begin{equation}
\label{ddvas}
\bx_{k,r}^+:=\bp_*(c_1(\CL'_k)^r\cdot\bq^*):\ V_{\ul{d}}\to V_{\ul{d}-k}
\end{equation}

\begin{equation}
\label{ttris}
\bx_{k,r}^-:=-\bq_*(c_1(\CL'_k)^r\cdot\bp^*):\ V_{\ul{d}}\to V_{\ul{d}+k}
\end{equation}

We consider the following generating series of operators on $V$:

\begin{multline}
\label{raz}
\bh_k(u)=1+\sum_{r=0}^\infty\bh_{k,r}\hbar^{-r}u^{-r-1}:=\\=
\ba_k(u+\frac{k+1}{2})^{-1}\ba_k(u+\frac{k-1}{2})^{-1}\ba_{k-1}(u+\frac{k-1}{2})\ba_{k+1}(u+\frac{k+1}{2}):\\
V_{\ul{d}}\to V_{\ul{d}}[[u^{-1}]];
\end{multline}

\begin{equation}
\label{dvas}
\bx_{k}^+(u)=\sum_{r=0}^\infty\bx_{k,r}^+\hbar^{-r}u^{-r-1}:\
V_{\ul{d}}\to V_{\ul{d}-k}[[u^{-1}]]
\end{equation}

\begin{equation}
\label{tris}
\bx_{k}^-(u)=\sum_{r=0}^\infty\bx_{k,r}^-\hbar^{-r}u^{-r-1}:\
V_{\ul{d}}\to V_{\ul{d}+k}[[u^{-1}]]
\end{equation}

\begin{thm}
\label{var}
The operators $\bh_{k,r},\bx_{k,r}^\pm$ on $V$ defined in~\ref{yang cor}
satisfy the relations in $Y(\fsl_n)$, i.e. they give rise to the action of
$Y(\fsl_n)$ on $V$.
\end{thm}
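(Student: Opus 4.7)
The plan is to use equivariant localization and the Thomason isomorphism to reduce all the Yangian relations~(\ref{11})--(\ref{15}) to explicit identities of rational functions in the fixed-point basis $\{[\widetilde{\ul{d}}]\}$ of $V$. The operators $\bx_k^\pm(u)$ act via correspondences supported on loci where a single $d_{k,j}$ changes by $\pm 1$, and $\bh_k(u)$ acts diagonally, so all matrix coefficients admit closed-form expressions.

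First I would compute the restriction of $c_1(\CL'_k)$ to the unique $\widetilde{T}\times\BC^*$-fixed point of $\fE_{\ul{d},k}$ lying over the pair $(\widetilde{\ul{d}},\widetilde{\ul{d}}{}')$ with $d'_{k,j}=d_{k,j}+1$. Since the fiber of $\CL_k$ there is $\Gamma(\bC,\CW_k/\CW'_k)$, a one-dimensional space whose $\widetilde{T}\times\BC^*$-weight is determined by $x_j$, $d_{k,j}$ and the loop-rotation twist, one obtains a linear eigenvalue $\beta_{k,j}(\widetilde{\ul{d}})$. Combining this with Theorem~\ref{feigin} and localization,
$$\bx_k^+(u)_{[\widetilde{\ul{d}},\widetilde{\ul{d}}{}']}=\frac{\ff_{k,[\widetilde{\ul{d}},\widetilde{\ul{d}}{}']}}{u-\beta_{k,j}(\widetilde{\ul{d}})},\qquad \bx_k^-(u)_{[\widetilde{\ul{d}}{}',\widetilde{\ul{d}}]}=\frac{\fe_{k,[\widetilde{\ul{d}}{}',\widetilde{\ul{d}}]}}{u-\beta_{k,j}(\widetilde{\ul{d}})}.$$
A parallel computation using $\ul{\CW}_k|_{\widetilde{\ul{d}}}=\bigoplus_{j=1}^k\CO(-d_{k,j})w_j$ evaluates $\ba_k(u)|_{\widetilde{\ul{d}}}$ as a product of linear factors, yielding an explicit rational form for $\bh_k(u)|_{\widetilde{\ul{d}}}$.

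With these formulas I would verify the relations in the following order. The Cartan commutativity $[\bh_{k,r},\bh_{l,s}]=0$ in~(\ref{11}) is automatic since both operators are diagonal in the fixed-point basis, and the weight relation $[\bh_{k,0},\bx_{l,s}^\pm]=\pm a_{kl}\bx_{l,s}^\pm$ is the leading-order truncation of~(\ref{12'}). Relation~(\ref{13}) is checked by noting that the commutator $[\bx_k^+(u),\bx_l^-(v)]$ at a fixed point only survives when the two elementary moves at indices $(k,j)$ and $(l,i)$ cancel: for $k\ne l$ the moves commute geometrically and the commutator vanishes, while for $k=l$ the $i=j$ and $i\ne j$ contributions combine into the rational function matching $\bh_k$ with the prescribed shift. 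The exchange relations~(\ref{12'}) and~(\ref{14'}) are verified entry by entry: the linear shifts in the arguments of $\ba_k$ built into the definition of $\bh_k(u)$ are precisely those needed to produce the prefactors $(2u-2v\mp a_{kl})/(2v-2u\mp a_{kl})$ upon rearrangement.

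The main obstacle is the Serre relation~(\ref{15}) for $|k-l|=1$, since it does not follow formally from the other relations. The degree-zero parts $\fe_k=\bx_{k,0}^-,\ \ff_k=\bx_{k,0}^+$ already satisfy the classical Serre relation in $\fU$ by Theorem~\ref{brav}, so the task is to show that twisting by $c_1(\CL'_k)^r$ preserves it. Since each $c_1(\CL'_k)$ acts diagonally at fixed points with eigenvalue $\beta_{k,j}$ depending only on $(k,j,d_{k,j})$ and not on the $l$-row, on each fixed-point path contributing to the double commutator the scalars $\beta_{k,j}^r,\beta_{k,j'}^p,\beta_{l,i}^s$ factor out, reducing~(\ref{15}) to a polynomial identity in three commuting variables which is elementary. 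This mirrors the argument of Varagnolo~\cite{v} for quiver varieties, which I would adapt to the Laumon setup; once it is settled, all defining relations of $Y(\fsl_n)$ hold on $V$ and the theorem follows.
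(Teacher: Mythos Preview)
Your approach is valid but genuinely different from the paper's. The paper never verifies relations~(\ref{11})--(\ref{15}) directly. Instead it transports everything through the isomorphism $\Psi:V\iso\fV$ of Theorem~\ref{brav} and compares with the operators $\bA_m(u),\bB_m(u),\bC_m(u)$ of Molev's RTT realization of $Y(\fgl_n)$ on the universal Verma module. Concretely, one checks that $\Psi$ carries $\ba_m(u)$ to $\bA_m(u)$ by matching eigenvalues in the Gelfand--Tsetlin versus fixed-point bases, and then that $\Psi$ carries $\bx_k^-(u)$ and $\bx_k^+(u)$ to $\bC_k(u+\tfrac{k-1}{2})\bA_k(u+\tfrac{k-1}{2})^{-1}$ and $\bA_k(u+\tfrac{k-1}{2})^{-1}\bB_k(u+\tfrac{k-1}{2})$ respectively, via a Lagrange interpolation on matrix coefficients (formula~(\ref{240})). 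Since the $\bA,\bB,\bC$ operators already arise from a $Y(\fgl_n)$-action and the Drinfeld isomorphism~(\ref{iso1})--(\ref{iso3}) is known, all Yangian relations---including the Serre relation~(\ref{15})---come for free. No case analysis is ever performed.

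What each route buys: the paper's argument is short and offloads the hard work to the RTT/Drinfeld dictionary in~\cite{mbook}, but it relies on the prior identification of $V$ with a Verma module and on the existence of an RTT presentation, neither of which is available in the affine setting of Section~\ref{psay}. Your Varagnolo-style direct verification is more laborious but self-contained and portable; indeed the paper essentially adopts your viewpoint when it passes to $\widehat Y$ (Theorem~\ref{vara}), invoking the present theorem only locally via reduction to the stack $\fZ_N$. One caution: your handling of~(\ref{15}) is the thinnest part. The observation that $\beta_{k,j}$ is insensitive to the $l$-row is correct, but when both $k$-moves land on the same index $j$ the two eigenvalues are $\beta_{k,j}(d_{k,j})$ and $\beta_{k,j}(d_{k,j}{+}1)$, and which carries the exponent $r$ versus $p$ depends on the word; it is only after the built-in symmetrization $r\leftrightarrow p$ that the scalar part factors as $(\beta^r\beta'^p+\beta^p\beta'^r)$ against the classical Serre expression $[\fe_k,[\fe_k,\fe_l]]$ supplied by Theorem~\ref{brav}. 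Making this explicit (or alternatively deriving~(\ref{15}) algebraically from the already-checked~(\ref{14}) together with the degree-zero Serre relation) would close the sketch.
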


\proof We identify $V$ with the universal Verma module $\fV$ via the
isomorphism $\Psi$ of Theorem~\ref{brav}. We consider the operators
$\bA_m(u),\bB_m(u),\bC_m(u):\ \fV\to\fV[u],\ 1\leq m\leq n-1$,
and also $\bA_0(u)=1$, and $\bA_n(u)$, introduced
in~\cite{mbook}~5.3.
The explicit formula for the (diagonal) action of $\bA_m(u)$ in the
Gelfand-Tsetlin basis $\{\xi_{\widetilde{\ul{d}}}\}$ of $\fV$ is given in
Theorem~5.3.4 of~\cite{mbook}. It reads
\begin{equation}
\label{239}
\bA_m(u)\xi_{\widetilde{\ul{d}}}=(u+\hbar^{-1}x_1-d_{m1})\ldots
(u+\hbar^{-1}x_m-d_{mm})\xi_{\widetilde{\ul{d}}}
\end{equation}
The explicit formula for the (diagonal) action
of $\ba_m(u)$ in the fixed point basis $\{[\widetilde{\ul{d}}]\}$ of $V$
is given in the proof of Theorem~3.5 of~\cite{fr}. Comparing the two formulas
we see that the eigenvalues of $\bA_m(u)$ in the Gelfand-Tsetlin basis
coincide with the eigenvalues of $\ba_m(u)$ in the fixed point basis.
Now Theorem~\ref{feigin} implies that $\Psi$ takes $\ba_m(u)$ to $\bA_m(u)$.

Recall that there is another ``RTT'' presentation of $Y(\fsl_n)$,
see~\cite{mbook}. It is related to the ``new Drinfeld
presentation''~\ref{yang fin} by the Drinfeld isomorphism
(see~\cite{mbook}~5.3~and~3.1.8):

\begin{equation}
\label{iso1}
\bh_k(u)=
\bA_k(u+\frac{k+1}{2})^{-1}\bA_k(u+\frac{k-1}{2})^{-1}\bA_{k-1}(u+\frac{k-1}{2})\bA_{k+1}(u+\frac{k+1}{2})
\end{equation}

\begin{equation}
\label{iso2}
\bx_k^+(u)=\bA_k(u+\frac{k-1}{2})^{-1}\bB_k(u+\frac{k-1}{2})
\end{equation}

\begin{equation}
\label{iso3}
\bx_k^-(u)=\bC_k(u+\frac{k-1}{2})\bA_k(u+\frac{k-1}{2})^{-1}
\end{equation}

According to~\cite{mbook}, the operators
$\bA_k(u),\bB_k(u),\bC_k(u)$ arise from
an action of $Y(\fsl_n)$ in the RTT presentation, so the LHS of
equations~(\ref{iso1},\ref{iso2},\ref{iso3}) do satisfy the
relations~(\ref{11}--\ref{15}). So in order to prove the theorem,
it remains to check that the isomorphism $\Psi$ takes the generating series
of the LHS of~(\ref{dvas}) (resp. of~(\ref{tris})) to the LHS of~(\ref{iso2})
(resp. of~(\ref{iso3})). We consider the case of $\bx_k^-(u)$, the case of
$\bx_k^+(u)$ being absolutely similar.

The character of $\widetilde{T}\times\BC^*$ in the fiber of the line bundle
$\CL_i$ at a point $(\widetilde{\ul{d}},\widetilde{\ul{d}}{}')
\in\fE_{\ul{d},i}$ equals $-x_j+d_{ij}\hbar$ if $d_{ij}'=d_{ij}+1$ for certain
$j\leq i$. It follows from Theorem~\ref{feigin} b) that the matrix coefficients
of $\bx_{i,r}^-$ in the basis $\{[\widetilde{\ul{d}}]\}$ are given by

\begin{multline}
\label{mole}
\bx_{i,r[\widetilde{\ul{d}},\widetilde{\ul{d}}{}']}^-=
(-x_j+(d_{ij}+\frac{1-i}{2})\hbar)^r
\fe_{i[\widetilde{\ul{d}},\widetilde{\ul{d}}{}']}=
=-\hbar^{-1}(-x_j+(d_{ij}+\frac{1-i}{2})\hbar)^r\cdot\\
\prod_{j\ne k\leq i}(x_j-x_k+(d_{i,k}-d_{i,j})\hbar)^{-1}
\prod_{k\leq i-1}(x_j-x_k+(d_{i-1,k}-d_{i,j})\hbar)
\end{multline}
if $d'_{i,j}=d_{i,j}+1$ for certain $j\leq i$, and all the other matrix
coefficients vanish.

On the other hand, the matrix coefficients of $\bC_i(u)\bA_i(u)^{-1}$
in the Gelfand-Tsetlin basis are computed in Theorem~5.3.4
of~\cite{mbook}. Namely, if $d'_{i,j}=d_{i,j}+1$ for certain $j\leq i$, then
specializing $u=d_{ij}-\hbar^{-1}x_j$ we have
\begin{equation}
\label{240}
\bC_i(d_{ij}-\hbar^{-1}x_j)\xi_{\widetilde{\ul{d}}}=\prod_{k=1}^{i-1}
(\hbar^{-1}(x_k-x_j)-d_{i-1,k}+d_{ij})\xi_{\widetilde{\ul{d}}{}'}
\end{equation}
Since $\deg \bC_i(u)=i-1$, we can find the matrix coefficients of $\bC_i(u)$,
and then of $\bC_i(u+\frac{i-1}{2})\bA_i(u+\frac{i-1}{2})^{-1}$
by the Lagrange interpolation.
The resulting formula coincides with the negative of~(\ref{mole}).
Since the isomorphism $\Psi$ takes
$[\widetilde{\ul{d}}]$ to $(-\hbar)^{|\ul{d}|}\xi_{\widetilde{\ul{d}}}$,
the coincidence of the two formulas completes the proof of the
theorem. \qed

\subsection{Modified generators}
\label{quotient}
We formulate a corollary which will be used in Section~\ref{psay}.
For any $0\leq m<i\leq n$ we will denote by $\ul{\CW}{}_{mi}$ the
quotient $\ul{\CW}{}_i/\ul{\CW}{}_m$ of the tautological
vector bundles on $\fQ_{\ul{d}}\times\bC$. Similarly to the above, we introduce the generating series:

$$
\ba_{mi}(u)=u^{i-m}\cdot\bp_*(c(\pi_*(\ul{\CW}_{mi}|_{\bC \backslash \infty}), 
(-u\hbar)^{-1})\cdot\bq^*):\ V_{\ul{d}}\to V_{\ul{d}}[[u^{-1}]][u]
$$

\begin{cor}
\label{faktor}
The operator $\bh_i(u)=\ba_{mi}(u+\frac{i-1}{2})^{-1}\ba_{mi}(u+\frac{i+1}{2})^{-1}\ba_{m,i-1}(u+\frac{i-1}{2})
\ba_{m,i+1}(u+\frac{i+1}{2})$ does not depend on $m$, for any $m<i$.
\end{cor}

\proof Let us denote the RHS of the corollary by $\bh_{mi}(u)$. We have
to prove that $\bh_{mi}(u)=\bh_i(u)$. Due to the relation~(\ref{13}) we
have to check $\bh_{mi,r+s}=[\bx_{i,r}^+,\bx_{i,s}^-]$. This is done
by reduction to the moduli stack $\fZ_k$ introduced in section~3.11
of~\cite{bf}, absolutely similarly to {\em loc. cit.} 

More precisely, we consider a $k$-dimensional vector space with a basis
$\fw_1,\ldots,\fw_k$, and a torus $\fT$ acting on $\fw_l$ by the
character $\tau^2_l$. Let $\fZ_k$ be the moduli stack of flags of
coherent sheaves $\fW_1\subset\ldots\subset\fW_k$ on $\bC$ locally
free at $\infty\in\bC$, equipped with compatible trivializations
$\fW_l|_\infty=\langle\fw_1,\ldots,\fw_l\rangle$. Note that
$\fZ_k$ has connected components numbered by the degrees of $\fW_l$. 
Absolutely similarly to~\ref{classic} we
introduce the correspondences between various connected
components. They are equipped with line bundles similar to~\ref{classic}.
This gives rise to the operators
$^\fZ\bx^{\pm}_{i,r},\ ^\fZ\ba_i(u),\ ^\fZ\bh_{i,r},\ 
1\leq i\leq k-1$, on the localized $\fT\times\BC^*$-equivariant cohomology of 
$\fZ_k$. 

For $k=n-m$, we have a map
$\fz_k:\ \fQ_{\ul{d}}\to\fZ_k,\ (\CW_\bullet)\mapsto
(\CW_{m+1}/\CW_m\subset\ldots\subset\CW_n/\CW_m)$. 
The argument of~\cite[Sections~3.10,3.11]{bf} deduces the desired relation
$\bh_{mi,r+s}=[\bx_{i,r}^+,\bx_{i,s}^-]$ from a certain weak form of the
identity $^\fZ\bh_{i-m,r+s}=[\ ^\fZ\bx_{i-m,r}^+,\ ^\fZ\bx_{i-m,s}^-]$. The (weak 
form of the) latter relation is in turn deduced in {\em loc. cit.} from
$\bh_{i,r+s}=[\bx_{i,r}^+,\bx_{i,s}^-]$.

Alternatively, the corollary can be proved by a direct calculation which is
a rational version of the trigonometric calculation in the proof 
of~\cite[Corollary~2.16]{ts}.
\qed

\section{Parabolic sheaves and affine Yangians}
\label{psay}
In this section we generalize the previous results to the
affine setting.

\subsection{Parabolic sheaves}
\label{PS}
We recall the setup of section~3 of~\cite{bf}. Let $\bX$ be another smooth
projective curve of genus zero. We fix a coordinate $y$ on $\bX$,
and consider the action of $\BC^*$ on $\bX$ such that
$c(x)=c^{-2}x$. We have $\bX^{\BC^*}=\{0_\bX,\infty_\bX\}$. Let
$\bS$ denote the product surface $\bC\times\bX$. Let $\bD_\infty$
denote the divisor $\bC\times\infty_\bX\cup\infty_\bC\times\bX$.
Let $\bD_0$ denote the divisor $\bC\times0_\bX$.

Given an $n$-tuple of nonnegative integers
$\ul{d}=(d_0,\ldots,d_{n-1})$, we say that a {\em parabolic sheaf}
$\CF_\bullet$ of degree $\ul{d}$ is an infinite flag of torsion
free coherent sheaves of rank $n$ on $\bS:\
\ldots\subset\CF_{-1}\subset\CF_0\subset\CF_1\subset\ldots$ such
that:

(a) $\CF_{k+n}=\CF_k(\bD_0)$ for any $k$;

(b) $ch_1(\CF_k)=k[\bD_0]$ for any $k$: the first Chern classes
are proportional to the fundamental class of $\bD_0$;

(c) $ch_2(\CF_k)=d_i$ for $i\equiv k\pmod{n}$;

(d) $\CF_0$ is locally free at $\bD_\infty$ and trivialized at
$\bD_\infty:\ \CF_0|_{\bD_\infty}=W\otimes\CO_{\bD_\infty}$;

(e) For $-n\leq k\leq0$ the sheaf $\CF_k$ is locally free at
$\bD_\infty$, and the quotient sheaves $\CF_k/\CF_{-n},\
\CF_0/\CF_k$ (both supported at $\bD_0=\bC\times0_\bX\subset\bS$)
are both locally free at the point $\infty_\bC\times0_\bX$;
moreover, the local sections of $\CF_k|_{\infty_\bC\times \bX}$
are those sections of $\CF_0|_{\infty_\bC\times
\bX}=W\otimes\CO_\bX$ which take value in $\langle
w_1,\ldots,w_{n+k}\rangle\subset W$ at $0_\bX\in \bX$.

\medskip

The fine moduli space
$\CP_{\ul{d}}$ of degree $\ul{d}$ parabolic sheaves exists and is
a smooth connected quasiprojective variety of dimension
$2d_0+\ldots+2d_{n-1}$.

\subsection{Fixed points}
\label{fp}
The group $\widetilde{T}\times\BC^*\times\BC^*$ acts naturally on
$\CP_{\ul{d}}$, and its fixed point set is finite. In order to describe it,
we recall the well known description of the fixed point set of
$\BC^*\times\BC^*$ on the Hilbert scheme of
$(\bC-\infty_\bC)\times(\bX-\infty_\bX)$. Namely, the fixed points are
parametrized by the Young diagrams, and for a diagram
$\lambda=(\lambda_0\geq\lambda_1\geq\ldots)$ (where $\lambda_N=0$ for $N\gg0$)
the corresponding fixed point is the ideal
$J_\lambda=\BC[z]\cdot(\BC y^0z^{\lambda_0}\oplus
\BC y^1z^{\lambda_1}\oplus\ldots)$. We will view $J_\lambda$ as an ideal in
$\CO_{\bC\times\bX}$ coinciding with $\CO_{\bC\times\bX}$ in a neighbourhood
of infinity.

We say $\lambda\supset\mu$ if $\lambda_i\geq\mu_i$ for any $i\geq0$.
We say $\lambda\widetilde\supset\mu$ if $\lambda_i\geq\mu_{i+1}$ for
any $i\geq0$.

We consider a collection $\blambda=(\lambda^{kl})_{1\leq k,l\leq n}$
of Young diagrams satisfying the following inequalities:
\begin{multline}
\label{pool}
\lambda^{11}\supset\lambda^{21}\supset\ldots\supset\lambda^{n1}
\widetilde\supset\lambda^{11};\
\lambda^{22}\supset\lambda^{32}\supset\ldots\supset\lambda^{12}
\widetilde\supset\lambda^{22};\
\ldots;\\
\lambda^{nn}\supset\lambda^{1n}\supset\ldots\supset\lambda^{n-1,n}
\widetilde\supset\lambda^{nn}
\end{multline}
We set $d_k(\blambda)=\sum_{l=1}^n|\lambda^{kl}|$, and
$\ul{d}(\blambda)=(d_0(\blambda):=d_n(\blambda),\ldots,d_{n-1}(\blambda))$.

Given such a collection $\blambda$ we define a parabolic sheaf
$\CF_\bullet=\CF_\bullet(\blambda)$, or just $\blambda$ by an abuse of
notation, as follows: for $1\leq k\leq n$ we set
\begin{equation}
\label{swim}
\CF_{k-n}=\bigoplus_{1\leq l\leq k} J_{\lambda^{kl}}w_l\oplus
\bigoplus_{k<l\leq n}J_{\lambda^{kl}}(-\bD_0)w_l
\end{equation}

\begin{lem}
\label{evi}
The correspondence $\blambda\mapsto\CF_\bullet(\blambda)$ is a bijection
between the set of collections $\blambda$ satisfying~(\ref{pool})
such that $\ul{d}(\blambda)=\ul{d}$,
and the set of $\widetilde{T}\times\BC^*\times\BC^*$-fixed points in
$\CP_{\ul{d}}$.
\end{lem}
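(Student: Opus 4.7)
The plan is to exploit $\widetilde{T}$-equivariance to diagonalize each $\CF_{k-n}$ into rank-one $\widetilde{T}$-eigensheaves, use $\BC^*\times\BC^*$-equivariance to identify each eigensheaf as a monomial ideal labelled by a Young diagram, and then translate the flag inclusions and the periodicity~(a) into the combinatorial containments of~(\ref{pool}).

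First, the torus $\widetilde{T}$ acts on the trivialization $\CF_0|_{\bD_\infty}=W\otimes\CO_{\bD_\infty}$ through pairwise distinct characters on $w_1,\ldots,w_n$. Any $\widetilde{T}$-invariant torsion-free extension of such a diagonal sheaf splits as a direct sum of its $\widetilde{T}$-eigen-components, so each $\CF_{k-n}$ with $1\leq k\leq n$ takes the form
$$\CF_{k-n}=\bigoplus_{l=1}^n I_{k,l}\,w_l,$$
where each $I_{k,l}$ is a rank-one torsion-free subsheaf of its ambient line bundle, agreeing with that line bundle away from $0_\bC\times 0_\bX$ thanks to local freeness at $\bD_\infty$. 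Condition~(e) then forces $I_{k,l}\subset\CO_\bS$ for $l\leq k$ and $I_{k,l}\subset\CO_\bS(-\bD_0)$ for $l>k$, since the local sections of $\CF_{k-n}$ at $\infty_\bC\times\bX$ must take values in $\langle w_1,\ldots,w_k\rangle$ at $0_\bX$.

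Second, a standard toric argument (identical to the one classifying fixed points of $\BC^*\times\BC^*$ on $\Hilb^d(\BA^2)$) shows that the $\BC^*\times\BC^*$-invariant ideal sheaves of $\CO_\bS$ which agree with $\CO_\bS$ outside $0_\bC\times 0_\bX$ are precisely the monomial ideals in $\BC[z,y]$, and these are parametrized by Young diagrams via $\lambda\mapsto J_\lambda$. Writing $I_{k,l}=J_{\lambda^{kl}}$ (respectively $J_{\lambda^{kl}}(-\bD_0)$) produces the required collection $\blambda=(\lambda^{kl})$.

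Third, I would translate the inclusions $\CF_{k-n-1}\subset\CF_{k-n}$ summand by summand along each column $l$. When the two summands on $w_l$ carry matching twists, the inclusion reduces to $J_{\lambda^{k-1,l}}\subset J_{\lambda^{kl}}$, which is equivalent to $\lambda^{k-1,l}\supset\lambda^{kl}$. At the unique step (cyclically) where the twist changes---namely $k=l$ for each column $l\geq 2$, and across the wrap-around inclusion $\CF_0(-\bD_0)=\CF_{-n}\subset\CF_{1-n}$ for column $l=1$---the inclusion reads $y\cdot J_{\lambda^{k-1,l}}\subset J_{\lambda^{kl}}$, which on monomial generators unwinds to the cyclic condition $\lambda^{k-1,l}\widetilde\supset\lambda^{kl}$. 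Stringing these around each column produces exactly the chains of~(\ref{pool}), and the degree condition $\ul{d}(\blambda)=\ul{d}$ follows from additivity of $ch_2$ across the direct sum, using that the twist by $-\bD_0$ leaves $ch_2$ unchanged and $ch_2(J_\lambda)=|\lambda|$. The inverse assignment is given by~(\ref{swim}), and axioms (a)--(e) are read off directly from that construction. The main obstacle I expect is the careful bookkeeping in this final step: correctly identifying the twist by $-\bD_0$ with multiplication by the coordinate $y$ on $\bX$, and tracking the unique location of the $\widetilde\supset$ within each cyclic column; once this dictionary is in place, the verification is entirely formal.
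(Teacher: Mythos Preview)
Your argument is correct and is precisely the standard unwinding that the paper has in mind; the paper's own proof reads, in its entirety, ``Evident.'' In other words, you have supplied the details that the authors chose to omit, and your bookkeeping of the twist by $-\bD_0$ (as multiplication by $y$) and the location of the single $\widetilde\supset$ in each cyclic column matches the chains~(\ref{pool}) exactly.
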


\proof It is easy to see that flags of the form \eqref{swim} are torus fixed points. Note that the condition $\lambda^{kk} \widetilde\subset \lambda^{k-1,k}$ appears because of the twists by $\bD_0$ in \eqref{swim}. Conversely, any torus fixed point splits up as a sum of rank 1 torsion free sheaves:

$$
\CF_{k-n} = \bigoplus_l T_{kl} w_l,
$$
just as in the well-known case of rank $n$ torsion free sheaves on a surface. The rank 1 torsion free sheaves $T_{kl}$ are just twisted ideal sheaves: $T_{kl} = J_{\lambda^{kl}}$ for $l\leq k$ and $T_{kl} = J_{\lambda^{kl}}(-\bD_0)$ for $l>k$, as follows from the framing and $c_1$ conditions. Finally, the fact that the partitions $\lambda^{kl}$ have to satisfy \eqref{pool} is simply equivalent to the conditions $\CF_{k-n} \subset \CF_{k-1-n}$. \qed

\subsection{Parabolic sheaves as orbifold sheaves} \label{realization}
We will now introduce a different realization of parabolic sheaves, and
another parameterization of the fixed point set which is very closely related
to this new realization. We first learned of this construction from A. Okounkov,
though it is already present in the work of Biswas~\cite{bi}. Let
$\sigma:\bC\times \bX\rightarrow \bC\times \bX$ denote the map
$\sigma(z,y)=(z,y^n)$, and let $G=\BZ/n\BZ$. Then $G$ acts on $\bC\times \bX$
by multiplying the coordinate on $\bX$ with the $n-$th roots of unity. \\

A parabolic sheaf $\CF_\bullet$ is completely determined by the flag of sheaves
$$
\CF_0(-\bD_0)\subset \CF_{-n+1}\subset...\subset \CF_0,
$$
satisfying conditions~\ref{PS}.(a--e).
To $\CF_\bullet$ we can associate a single
$G$-invariant sheaf $\tilde \CF$ on $\bC\times \bX$:
$$
\tilde \CF=\sigma^*\CF_{-n+1}+\sigma^*\CF_{-n+2}(-\bD_0)+...+\sigma^*\CF_0(-(n-1)\bD_0).
$$
This $G$ invariant sheaf determines a parabolic sheaf if and only if it satisfies certain numeric and framing conditions that
mimick conditions~\ref{PS}.(b--d):

(b$'$) $ch_1(\tilde \CF)=-\frac {n(n-1)}2[\bD_0]$;

(c$'$) $ch_2(\tilde \CF)=d_1+...+d_n$;

(d$'$) $\tilde{\CF}$ is locally free at $\bD_\infty$ and trivialized there:

$$
\tilde \CF|_{\bD_\infty}=\CO_{\bD_\infty} \oplus \CO_{\bD_\infty}(-\bD_0) \oplus... \oplus \CO_{\bD_\infty}(-(n-1)\bD_0).
$$

If $\CF_\bullet$ is a $\widetilde{T}\times\BC^*\times\BC^*$ fixed
parabolic sheaf corresponding to a collection $\blambda$ as in the
previous section, then we have
\begin{equation} \label{explicit tilde fixed points}
\tilde \CF = \bigoplus_{l=1}^n J_{\lambda^l}(-(l-1)\bD_0)w_l,
\end{equation}
where $(\lambda^1,...,\lambda^n)$ is a collection of partitions, given by
\begin{equation} \label{relation lambdas}
\lambda^l_{ni-n\lfloor \frac {k-l}n \rfloor +k-l}=\lambda^{kl}_i.
\end{equation}
Here $\lfloor \frac {k-l}n \rfloor$ stands for the maximal integer smaller
than or equal to $\frac {k-l}n$.

For $j\in \BZ$, let $(j \textrm{ mod }n)$ denote that element of
$\{1,\ldots,n\}$ which is congruent to $j$ modulo $n$. For
$i\geq j\in \BZ$, if we denote
\begin{equation} \label{relation d lambda}
d_{ij}=\lambda^{j \textrm{ mod } n}_{i-j}
\end{equation}
we obtain a collection $(d_{ij})=\widetilde{\ul{d}}=\widetilde{\ul{d}}(\blambda)$ of non-negative integers with the properties that
\begin{equation}
\label{fei}
d_{kj}\geq d_{ij}\ \forall i\geq k\geq j;\
d_{i+n,j+n}=d_{ij}\ \forall i\geq j;\
d_{ij}=0\ \operatorname{for}\ i-j\gg0.
\end{equation}
For $1\leq k\leq n$, let us write
$$
d_k(\widetilde{\ul{d}})=\sum_{j\leq k}d_{kj}=\sum_{l=1}^n \sum_{i\leq \lfloor \frac {k-l}n\rfloor } d_{k(l+ni)}=$$
$$=\sum_{l=1}^n \sum_{i\geq 0} \lambda_{ni-n\lfloor \frac {k-l}n\rfloor +k-l}^l=\sum_{l=1}^{n}\sum_{i\geq 0} \lambda^{kl}_i=d_k(\blambda).
$$
The collection $(d_1(\widetilde{\ul{d}}),\ldots,d_n(\widetilde{\ul{d}}))=
(d_1(\blambda),\ldots,d_n(\blambda))$ will be denoted by 
$\|\widetilde{\ul{d}}\|=\|\blambda\|$.
Summarizing the above discussion, we have:
\begin{lem}
\label{dent}
The correspondence $\blambda\mapsto\widetilde{\ul{d}}(\blambda)$ is a
bijection between the set of collections $\blambda$ satisfying~(\ref{pool}),
and the set $D$ of collections $\widetilde{\ul{d}}$ satisfying~(\ref{fei}).
We have $\|\blambda\|=\|\widetilde{\ul{d}}(\blambda)\|$.
\end{lem}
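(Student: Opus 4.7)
The plan is to exhibit an explicit inverse to the map $\blambda\mapsto\widetilde{\ul{d}}(\blambda)$ and verify that the combinatorial conditions (\ref{pool}) match up with (\ref{fei}) term by term. The whole lemma is essentially bookkeeping: the formulas (\ref{relation lambdas}) and (\ref{relation d lambda}) are invertible, so the content is in checking that each defining inequality on one side corresponds to one on the other.

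First, I would factor the correspondence through the set of $n$-tuples of ordinary partitions. Given $\widetilde{\ul{d}}=(d_{ij})\in D$, set $\lambda^l_m:=d_{m+l,l}$ for $l\in\{1,\ldots,n\}$ and $m\geq 0$. The periodicity $d_{i+n,j+n}=d_{ij}$ in (\ref{fei}) lets us recover every $d_{ij}$ with $i\geq j$ from the $n$-tuple $(\lambda^1,\ldots,\lambda^n)$ via (\ref{relation d lambda}); the chain of inequalities $d_{kj}\geq d_{ij}$ for $k\leq i$ becomes $\lambda^{j\bmod n}_{k-j}\geq\lambda^{j\bmod n}_{i-j}$, i.e.\ the statement that $\lambda^l$ is weakly decreasing; and $d_{ij}=0$ for $i-j\gg 0$ becomes finiteness of support. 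Thus $\widetilde{\ul{d}}\leftrightarrow(\lambda^1,\ldots,\lambda^n)$ is a bijection between $D$ and $n$-tuples of partitions.

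Next, I would verify that the same $n$-tuples of partitions also classify the collections $\blambda$ satisfying (\ref{pool}). The formula (\ref{relation lambdas}) says that for fixed $l$ the parts of $\lambda^{kl}$ (as $k$ runs through $l,l+1,\ldots,n,1,\ldots,l-1$ with the appropriate cyclic shift) are the parts of the single partition $\lambda^l$, interlaced in blocks of $n$. Concretely, for $l=1$ we have $\lambda^{k1}_i=\lambda^1_{ni+k-1}$ for $1\leq k\leq n$, so the inequality $\lambda^{k1}_i\geq\lambda^{k+1,1}_i$ is exactly $\lambda^1_{ni+k-1}\geq\lambda^1_{ni+k}$, and the ``tilde'' inequality $\lambda^{n1}_i\geq\lambda^{1,1}_{i+1}$ is exactly $\lambda^1_{ni+n-1}\geq\lambda^1_{n(i+1)}$; the union of these over all $i$ and $k$ is precisely the statement that $\lambda^1$ is weakly decreasing. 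The same analysis for general $l$, with the cyclic shift $k\mapsto k-l\pmod n$, shows that the $l$-th chain in (\ref{pool}) is equivalent to $\lambda^l$ being a partition. Hence $\blambda\leftrightarrow(\lambda^1,\ldots,\lambda^n)$ is also a bijection.

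Combining the two bijections gives the claimed correspondence. Finally, the degree identity $\ul{d}(\blambda)=\ul{d}(\widetilde{\ul{d}}(\blambda))$ is the calculation already displayed in \ref{realization}: one simply sums $\sum_{j\leq k}d_{kj}$ and rewrites each $d_{k,l+ni}$ as $\lambda^l_{ni-n\lfloor(k-l)/n\rfloor+k-l}=\lambda^{kl}_i$, then sums over $l$ and $i$ to recover $\sum_l|\lambda^{kl}|=d_k(\blambda)$. No step presents any real obstacle; the only care needed is to handle the cyclic shift in the indexing of (\ref{relation lambdas}) correctly, which is the reason the chains in (\ref{pool}) have the ``wrap-around'' $\widetilde\supset$ at the end.
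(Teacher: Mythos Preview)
Your proposal is correct and is exactly the paper's approach: the paper gives no separate proof but introduces the lemma with ``Summarizing the above discussion, we have,'' that discussion being precisely the formulas (\ref{relation lambdas}), (\ref{relation d lambda}) and the displayed degree computation you cite. You have simply written out the implicit verification that the chains in (\ref{pool}) unwind to the single condition that each $\lambda^l$ is weakly decreasing, which the paper leaves to the reader.
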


By virtue of Lemmas~\ref{evi} and~\ref{dent} we will parametrize and
sometimes denote the $\widetilde{T}\times\BC^*\times\BC^*$-fixed points in
$\CP_{\ul{d}}$ by collections $\widetilde{\ul{d}}$ such that
$\ul{d}=\|\widetilde{\ul{d}}\|$.

\subsection{Correspondences}
If the collections $\ul{d}$ and $\ul{d}'$ differ at the only place
$i\in I:=\BZ/n\BZ$, and $d'_i=d_i+1$, then we consider the
correspondence
$\sE_{\ul{d},i}\subset\CP_{\ul{d}}\times\CP_{\ul{d}'}$ formed by
the pairs $(\CF_\bullet,\CF'_\bullet)$ such that for $j\not\equiv
i\pmod{n}$ we have $\CF_j=\CF'_j$, and for $j\equiv i\pmod{n}$ we
have $\CF'_j\subset\CF_j$.

It is a smooth quasiprojective algebraic variety of dimension
$1+2\sum_{i\in I}d_i$.

We denote by $\bp$ (resp. $\bq$) the natural projection
$\sE_{\ul{d},i}\to\CP_{\ul{d}}$ (resp.
$\sE_{\ul{d},i}\to\CP_{\ul{d}'}$). For $j\equiv i\pmod{n}$ the
correspondence $\sE_{\ul{d},i}$ is equipped with a natural line
bundle $\sL_j$ whose fiber at $(\CF_\bullet,\CF'_\bullet)$ equals
$\Gamma(\bC,\CF_j/\CF'_j)$. Finally, we have a transposed
correspondence
$^\sT\sE_{\ul{d},i}\subset\CP_{\ul{d}'}\times\CP_{\ul{d}}$.

\subsection{Equivariant cohomology}
We denote by ${}'M$ the direct sum of equivariant (complexified)
cohomology:
${}'M=\oplus_{\ul{d}}H^\bullet_{\widetilde{T}\times\BC^*\times\BC^*}
(\CP_{\ul{d}})$.
It is a module over $H^\bullet_{\widetilde{T}\times\BC^*\times\BC^*}(pt)
=\BC[\ft\oplus\BC\oplus\BC]=\BC[x_1,\ldots,x_n,\hbar,\hbar']$.
Here $\hbar'$ is twice the positive generator of $H^2_{\BC^*}(pt,\BZ)$ for
the second copy of $\BC^*$.
We define
$M=\ {}'M\otimes_{H^\bullet_{\widetilde{T}\times\BC^*\times\BC^*}(pt)}
\on{Frac}(H^\bullet_{\widetilde{T}\times\BC^*\times\BC^*}(pt))$.

We have an evident grading $$M=\oplus_{\ul{d}} M_{\ul{d}},\
M_{\ul{d}}=H^\bullet_{\widetilde{T}\times\BC^*\times\BC^*}(\CP_{\ul{d}})
\otimes_{H^\bullet_{\widetilde{T}\times\BC^*\times\BC^*}(pt)}
\on{Frac}(H^\bullet_{\widetilde{T}\times\BC^*\times\BC^*}(pt)).$$


\subsection{The action of generators}
\label{aff op} The grading and the correspondences
$^\sT\sE_{\ul{d},i},\sE_{\ul{d},i}$
give rise to the following operators on $M$
(note that though $\bp$ is not proper, $\bp_*$ is well defined on the
localized equivariant cohomology due to the finiteness of the fixed point
set of $\widetilde{T}\times\BC^*\times\BC^*$):
\begin{equation}
\label{ein}
\fh_i=\hbar^{-1}(x_{i+1}-x_i)+\delta_{i,0}\hbar^{-1}\hbar'+
2d_i-d_{i-1}-d_{i+1}+1:\ M_{\ul{d}}\to M_{\ul{d}};
\end{equation}
\begin{equation}
\label{drei}
\ff_i=\bp_*\bq^*:\ M_{\ul{d}}\to M_{\ul{d}-i};
\end{equation}
\begin{equation}
\label{vier}
\fe_i=-\bq_*\bp^*:\ M_{\ul{d}}\to M_{\ul{d}+i}.
\end{equation}

\begin{thm}
\label{braval}
For $n>2$,
the operators $\fe_i,\fh_i,\ff_i$ of~\ref{aff op} on $M$
satisfy the relations of the Chevalley generators of the Kac-Moody algebra
$\widehat{\mathfrak{sl}}_n$.
\end{thm}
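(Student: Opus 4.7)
The plan is to verify each of the defining Chevalley--Serre relations of $\widehat{\mathfrak{sl}}_n$ one by one, following the fixed-point localization approach underlying Theorems~\ref{brav} and~\ref{feigin}. The relations $[\fh_i,\fh_j]=0$ are immediate from~(\ref{ein}), since each $\fh_i$ acts by a scalar on every graded piece $M_{\ul{d}}$. The weight relations $[\fh_i,\fe_j]=a_{ij}\fe_j$ and $[\fh_i,\ff_j]=-a_{ij}\ff_j$ (for the affine Cartan matrix $(a_{ij})$ of $\widehat{\mathfrak{sl}}_n$) amount to checking that the scalar in~(\ref{ein}) changes by exactly $2\delta_{ij}-\delta_{i,j+1}-\delta_{i,j-1}=a_{ij}$ when $d_j$ is increased by one; the $\delta_{i,0}\hbar^{-1}\hbar'$ correction in $\fh_0$ is independent of $\ul{d}$ and drops out of the commutator.

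Next I would handle the off-diagonal commutators between $\fe_i$ and $\ff_j$ (and between $\fe_i,\fe_j$ or $\ff_i,\ff_j$) when $i\not\equiv j\pmod{n}$. The correspondence $\sE_{\ul{d},i}$ modifies the parabolic flag only at indices congruent to $i$ modulo $n$, so for $i\not\equiv j$ the two fiber products defining the two orders of composition are naturally identified (the modifications take place at disjoint flag steps), and the base-change identity in equivariant cohomology yields $[\fe_i,\ff_j]=0$ as well as $[\fe_i,\fe_j]=[\ff_i,\ff_j]=0$. This disposes of all Serre relations for pairs of nodes with $a_{ij}=0$.

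The diagonal case $[\fe_i,\ff_i]=\fh_i$ is verified by computing both sides in the fixed-point basis of Lemma~\ref{evi}. Using the explicit description~(\ref{swim}) of the $\widetilde T\times\BC^*\times\BC^*$-fixed parabolic sheaves, the weight of the line bundle $\sL_j$ at each fixed point of $\sE_{\ul{d},i}$ is an explicit monomial in $x_1,\ldots,x_n,\hbar,\hbar'$, and the matrix coefficients of $\fe_i\ff_i-\ff_i\fe_i$ can be computed via the Atiyah--Bott localization formula, in the spirit of the proof of Theorem~\ref{feigin}(b). The resulting formulas will match the eigenvalues of $\fh_i$ from~(\ref{ein}); the affine correction $\delta_{i,0}\hbar^{-1}\hbar'$ enters exactly because of the periodicity $\CF_{k+n}=\CF_k(\bD_0)$, which introduces the $\BC^*$-weight $\hbar'$ of the $\bX$-direction at the affine node.

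The main obstacle is the cubic Serre relation $[\fe_i,[\fe_i,\fe_j]]=0$ (and its $\ff$-analogue) for adjacent nodes $a_{ij}=-1$. I would again localize to fixed points: since only three consecutive steps $\CF_{i-1},\CF_i,\CF_{i+1}$ of the parabolic flag participate in the triple composition of correspondences, the computation is local in the affine Dynkin diagram and parallels the Serre relation for a $\fgl_3$-Laumon triple, which already follows from Theorem~\ref{brav}. The hypothesis $n>2$ enters precisely here: it ensures that the affine Cartan matrix is simply-laced of type $\widetilde{A}_{n-1}$ with off-diagonal entries in $\{0,-1\}$, so that only cubic Serre relations arise. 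For $n=2$ one would have $a_{01}=-2$ and a quartic Serre relation, which the present correspondences would not obviously satisfy; this explains the restriction in the statement.
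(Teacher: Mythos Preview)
Your overall strategy---verify the Chevalley--Serre relations one by one, using fixed-point localization for the hard ones and base-change for the easy ones---is reasonable, and is in fact the shape of the argument the paper defers to (the proof here is literally a pointer to~\cite{bf},~3.8--3.10, where exactly this kind of verification is carried out for an analogous correspondence action). So your approach is not so much different from the paper's as it is a reconstruction of what the cited reference does.

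There is one genuine imprecision. You assert that for $i\not\equiv j\pmod n$ the two fiber products computing $\fe_i\fe_j$ and $\fe_j\fe_i$ are ``naturally identified'' and hence $[\fe_i,\fe_j]=0$. This is false when $i,j$ are adjacent: if $j=i+1$, the order $\fe_i\fe_{i+1}$ forces the intermediate constraint $\CF_i\subset\CF'_{i+1}$, whereas $\fe_{i+1}\fe_i$ only forces the weaker $\CF'_i\subset\CF'_{i+1}$. The two fiber products differ precisely on the locus where $\CF_i\not\subset\CF'_{i+1}$, and this discrepancy is why $[\fe_i,\fe_{i+1}]\neq0$ and why the cubic Serre relation is nontrivial. (By contrast, your base-change argument \emph{does} go through for $[\fe_i,\ff_j]$ with $i\neq j$, even adjacent, because there the final constraint $\CF'_{i-1}\subset\CF'_i$ automatically implies both intermediate ones.) You seem to be aware of this, since you treat the cubic Serre relation separately, but the paragraph as written overstates what base-change gives.

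The remaining steps---$[\fe_i,\ff_i]=\fh_i$ via fixed-point localization, and the Serre relation via reduction to a ``$\fgl_3$-Laumon triple''---are correct in outline but are exactly where the work lies. The first essentially requires the tangent-character computation of Proposition~\ref{character to correspondence} (which the paper establishes independently later), and the second requires a precise mechanism for reducing the affine correspondence locally to a finite-type one. That mechanism is what~\cite{bf} supplies (and what the stack $\fZ_N$ of~\cite{bf},~3.11, used elsewhere in this paper, formalizes). Your sketch identifies the right ingredients but does not provide that reduction; without it, the Serre relation at and near the affine node remains unverified.
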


The proof is entirely similar to the proof of Conjecture~3.7 of~\cite{bf}
in~3.8--3.10 of {\em loc. cit.} \qed

\begin{rem}
\label{refe}
{\em The paper~\cite{bf} studies the equivariant $K$-theory of $\CP_{\ul{d}}$,
so the computations involved in the proof of~Theorem~\ref{braval} are the
rational (simpler) versions of the trigonometric computations of~\cite{bf}.
We believe~Theorem~\ref{braval} holds true for $n=2$ (as well as~Conjecture~3.7
of~\cite{bf}). However, our method of verification of the relations between the
Chevalley generators consists in reduction to the finite case $d_0=0$ considered
in~Section~\ref{odin}. Since the Cartan matrix of $\widehat\fsl_2$ contains the 
off-diagonal entries $-2$, contrary to the Cartan matrix of $\fsl_n$, this
method fails for $n=2$. Alternatively,~Theorem~\ref{braval} can be proved by
direct calculations using the explicit matrix coefficients of the Chevalley
generators given in~Theorem~\ref{affeigin} below. These calculations, again,
are the rational versions of the trigonometric calculations of~\cite{ts}.}
\end{rem}

Similarly to Theorem~\ref{feigin}, it is possible to compute the matrix
coefficients of $\fe_i,\ff_i$ in the basis formed by the fundamental classes
of the $\widetilde{T}\times\BC^*\times\BC^*$-fixed points
$[\widetilde{\ul{d}}]\in M$. To this end let us assign to a collection
$\widetilde{\ul{d}}$ a collection of weights
$p_{ij}:=-x_{j\pmod{n}}+d_{ij}\hbar+\lfloor\frac{-j}{n}\rfloor\hbar'$.

\begin{thm}
\label{affeigin}
The matrix coefficients of the operators $\fe_i,\ff_i$ in the basis
$\{[\widetilde{\ul{d}}]\}$ are as follows:
$$\fe_{i[\widetilde{\ul{d}},\widetilde{\ul{d}}{}']}=
-\hbar^{-1}\frac{p_{i-1,j}-p_{ij}}{p_{ii}-p_{ij}}\prod_{j\ne k\leq i-1}
\frac{p_{i-1,k}-p_{ij}}{p_{ik}-p_{ij}}$$ if
$d'_{i,j}=d_{i,j}+1$ for certain $j\leq i$ (note that almost all factors in
this product are equal to 1 due to the condition~(\ref{fei}); also, in case
$i=j$ the factor $\frac{p_{i-1,j}-p_{ij}}{p_{ii}-p_{ij}}$ is set to be 1);

$$\ff_{i[\widetilde{\ul{d}},\widetilde{\ul{d}}{}']}=
\hbar^{-1}(p_{i+1,j}-p_{ij})(p_{i+1,i+1}-p_{ij})\prod_{j\ne k\leq i}
\frac{p_{i+1,k}-p_{ij}}{p_{ik}-p_{ij}}$$ if
$d'_{i,j}=d_{i,j}-1$ for certain $j\leq i$;

All the other matrix coefficients of $\fe_i,\ff_i$ vanish.
\end{thm}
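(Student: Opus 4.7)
The plan is to compute these matrix coefficients by direct equivariant localization on the correspondence $\sE_{\ul{d},i}$, adapting to the affine setting the argument that yields Theorem~\ref{feigin} in~\cite{fr}. First I would identify the $\widetilde{T}\times\BC^*\times\BC^*$-fixed locus of $\sE_{\ul{d},i}$: combining the incidence condition $\CF'_j\subset\CF_j$ for $j\equiv i\pmod n$ with the description of fixed parabolic sheaves in~\ref{fp} and~\ref{realization}, one sees that the fixed points of $\sE_{\ul{d},i}$ are in bijection with admissible pairs $(\widetilde{\ul{d}},\widetilde{\ul{d}}{}')$ differing in a single entry $d_{i,j}$ (propagated by the periodicity $d_{i+n,j+n}=d_{ij}$). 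Each such pair is the image of a \emph{unique} fixed point in $\sE_{\ul{d},i}$, so by the localization formula the matrix coefficient of $\fe_i$ reduces to a ratio of the equivariant Euler classes of the tangent spaces of $\CP_{\ul{d}}$ and $\sE_{\ul{d},i}$ at the corresponding fixed points; the coefficient of $\ff_i$ is handled in parallel using the transposed correspondence.

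The second step is to compute the tangent characters explicitly. Via the realization~(\ref{explicit tilde fixed points}) of a fixed parabolic sheaf as the $G=\BZ/n\BZ$-equivariant sheaf $\tilde{\CF}=\bigoplus_l J_{\lambda^l}(-(l-1)\bD_0)w_l$ on $\bC\times\bX$, the tangent space $T_{\widetilde{\ul{d}}}\CP_{\ul{d}}$ is the $G$-invariant part of a Nakajima-type $\Ext^1$ group on $\bC\times\bX$, and admits a standard ``box-by-box'' character formula expressing it as a sum of differences of the weights $p_{ij}$ and $\hbar$. The same calculation at the correspondence, augmented by one extra incidence constraint, produces $e_T(T\sE_{\ul{d},i})$ as a parallel product. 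Upon forming the ratio, the periodicity~(\ref{fei}) and the stabilization $d_{ij}=0$ for $i-j\gg 0$ ensure that all but finitely many factors cancel pairwise, leaving precisely the products in the statement of the theorem; the convention that sets $(p_{i-1,j}-p_{ij})/(p_{ii}-p_{ij})=1$ when $i=j$ reflects a removable singularity in the $k=j=i$ term.

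The main technical obstacle is the careful bookkeeping of weights in the affine case, in particular tracking how the periodicity-induced shift by $\hbar'$ enters through the floor function $\lfloor\frac{-j}{n}\rfloor$ in the definition of $p_{ij}$, and handling the boundary case $i\equiv 0\pmod n$ where the incremented box crosses the periodic boundary and is responsible for the $\hbar'$-contribution in the matrix coefficient. Once these are in hand, the comparison with Theorem~\ref{feigin} is essentially formal: substituting $p_{ij}$ for $-x_j+d_{ij}\hbar$ in the finite-type formula gives the affine matrix coefficients, reflecting the parallelism between the finite and affine Gelfand-Tsetlin patterns from~\ref{classical} and~\ref{realization}.
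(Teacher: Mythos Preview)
Your proposal is correct and follows essentially the same route as the paper: equivariant localization on $\sE_{\ul{d},i}$, reducing the matrix coefficients to ratios of tangent Euler classes at fixed points, then explicit character computation and cancellation. The one technical refinement worth noting is that the paper computes the tangent character of the correspondence not by augmenting the $G=\BZ/n\BZ$-cover with an extra incidence constraint, but by realizing $\sE_{\ul{d},i}$ directly as a moduli space of $H=\BZ/(n+1)\BZ$-invariant sheaves on $\bS$ (the flag now having $n+1$ steps), so that $T_{(\CF_\bullet,\CF'_\bullet)}\sE_{\ul{d},i}=-\chi_H(\CH,\CH(-\bD_\infty))$ is obtained by the same mechanism as Proposition~\ref{torus char E}; this makes the difference of characters with $T_{\widetilde{\ul{d}}}\CP_{\ul{d}}$ immediately visible and yields the product formula without ad hoc bookkeeping.
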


The proof will be given in section~\ref{310} below.

\subsection{Universal Verma module over $\widehat{\fgl}_n$}
\label{hypo}
Contrary to the case of Laumon spaces (Theorem~\ref{brav}), $M$ is not
isomorphic to the universal Verma module over $\widehat{\fsl}_n$.
We conjecture that $M$ is isomorphic to the universal Verma module over
$\widehat{\fgl}_n$. Let us introduce some more notations and
correspondences in order to formulate the conjecture more precisely.

Note that the centre of $\widehat{\fsl}_n$ is spanned by the element
$C=\fh_0+\ldots+\fh_{n-1}$ which acts on $M$ as $n+\hbar'\hbar^{-1}$.
We denote by $\fH$ the Heisenberg Lie algebra with generators $a_i,\ i\in\BZ$,
and $C'$, and relations $[a_p,C']=0,\ [a_p,a_q]=\delta_{p,-q}pC'$.
We denote by $\widehat{\fgl}_n$ the quotient
$(\fH\oplus\widehat{\fsl}_n)/(C'-nC)$. The universal Verma module over
$\widehat{\fgl}_n$ is the tensor product of the universal Verma module over
$\widehat{\fsl}_n$ and the Fock module over $\fH$.

For $\ul{d}=(d_1,\ldots,d_n)$ and $m\geq1$
we set $\ul{d}+m\delta:=(d_1+m,\ldots,d_n+m)$.
Let $\fE_{\ul{d},m\delta}\subset\CP_{\ul{d}}\times\CP_{\ul{d}+m\delta}$ be
the correspondence formed by all the pairs $(\CF_\bullet,\CF'_\bullet)$
such that $\CF_\bullet\supset\CF'_\bullet$, and the quotient
$\CF_\bullet/\CF'_\bullet$ is supported at a single point in $\bC$.
In this case the collection of quotients $(\CF_1/\CF'_1,\ldots,\CF_n/\CF'_n)$
can be organized into a nilpotent representation $T_\bullet$ of the cyclic
quiver $\widetilde{A}_{n-1}$, see~\cite{fgk}~7.4.
Let $\sE_{\ul{d},m\delta}^\circ\subset\fE_{\ul{d},m\delta}$
be the locally closed
subset formed by all the pairs $\CF_\bullet\supset\CF_\bullet$ such that
the corresponding $\widetilde{A}_{n-1}$-representation is indecomposable.
According to Proposition~7.8 of~\cite{fgk}, $\sE_{\ul{d},m\delta}^\circ$
is a union of $n$ middle-dimensional irreducible components. We denote by
$\sE_{\ul{d},m\delta}$ the closure of $\sE_{\ul{d},m\delta}^\circ$.

The correspondence
$\sE_{\ul{d},m\delta}\subset\CP_{\ul{d}}\times\CP_{\ul{d}+m\delta}$
gives rise to the operator $M_{\ul{d}}\to M_{\ul{d}+m\delta}$ which we
denote by $a_m$. The transpose correspondence gives rise to the operator
$M_{\ul{d}+m\delta}\to M_{\ul{d}}$ which we denote by $a_{-m}$. We set
$a_0=n+\frac{\hbar'}{\hbar}$. The following conjecture was proposed by
A.~Kuznetsov a few years ago.

\begin{conj}
\label{kuzn}
a) The operators $a_m,\ m\in\BZ$ and the operators
$\fe_i,\fh_i,\ff_i$ of~(\ref{ein},\ref{drei},\ref{vier}) satisfy the relations
of the Lie algebra $\widehat{\fgl}_n$, and equip $M$ with a structure
of $\widehat{\fgl}_n$-module.

b) $M$ is isomorphic to the universal Verma module over $\widehat{\fgl}_n$.
\end{conj}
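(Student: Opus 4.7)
The strategy breaks into three algebraic checks for part (a) and a character/lowest-weight argument for part (b). The $\widehat{\fsl}_n$-relations among $\fe_i,\fh_i,\ff_i$ are already established by Theorem~\ref{braval}, so what remains for (a) is (i) the Heisenberg relations $[a_p,a_q]=\delta_{p+q,0}\,p\,C'$ with the identification $C'=nC$, (ii) the mixed commutation relations $[a_m,\fe_i]=[a_m,\ff_i]=[a_m,\fh_i]=0$ for $m\neq 0$, and (iii) verifying that $a_0=n+\hbar'/\hbar$ is consistent with $C=\fh_0+\cdots+\fh_{n-1}$ acting as scalar $n+\hbar'/\hbar$ on $M$ (the latter is immediate from~(\ref{ein}) by summing over $i\in I$).

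For (i), the plan is to imitate the Baranovsky--Nakajima--Grojnowski Heisenberg construction on Hilbert schemes, adapted to the parabolic setting via the cyclic-quiver description of quotients $\CF_\bullet/\CF'_\bullet$ from~\cite{fgk}. Concretely, I would compute $[a_p,a_{-p}]$ by forming the set-theoretic composition of the correspondences $\sE_{\ul{d},p\delta}$ and $^\sT\!\sE_{\ul{d}+p\delta,p\delta}$ in $\CP_{\ul{d}}\times\CP_{\ul{d}}$, stratify by whether the two indecomposable cyclic-quiver quotients are supported at the same or different points of $\bC-\infty_\bC$, and show that the ``disjoint support'' contributions cancel against the analogous ones in the opposite composition. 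The remaining diagonal stratum then contributes $p\,C'$ by a residue/excess-intersection computation on the single-point fiber. The off-diagonal vanishing $[a_p,a_q]=0$ for $p+q\neq 0$ should fall out of the same set-theoretic analysis, since the two orderings of modifications over distinct points of $\bC$ manifestly agree.

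For (ii), the Chevalley correspondences $\sE_{\ul{d},i}$ change one partition entry by a single box, whereas $\sE_{\ul{d},m\delta}$ adds an indecomposable cyclic-quiver representation of dimension vector $(m,\dots,m)$. Under composition, the two modifications either occur at disjoint supports (where they commute) or at the same point of $\bC$, and one must show the ``same point'' contributions cancel between the two orderings. I would do this by a fixed-point localization argument using the explicit matrix coefficients of $\fe_i,\ff_i$ of Theorem~\ref{affeigin} together with an analogous formula for $a_{\pm m}$ obtained by residue calculus on the equivariant Euler class of the indecomposable quiver stratum; the combinatorics of adding a box to a tower of partitions $\lambda^{kl}$ and then adding an $n$-periodic ``ribbon'' is symmetric under reversal.

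For (b), once the $\widehat{\fgl}_n$-module structure is in place, the vector $[\emptyset]\in M_0=H^\bullet_{\widetilde T\times\BC^*\times\BC^*}(\CP_0)$ is a candidate lowest-weight vector: Theorem~\ref{affeigin} shows $\ff_i[\emptyset]=0$ for all $i\in I$ (no fixed point in $M_{-i}$ to map to), while $\fh_i[\emptyset]$ gives generic weight values $\hbar^{-1}(x_{i+1}-x_i)+\delta_{i,0}\hbar^{-1}\hbar'+1$, and $a_{-m}[\emptyset]=0$ since $\CP_{-m\delta}=\emptyset$. These weights are algebraically independent over $\BC$, so the submodule generated by $[\emptyset]$ is the universal Verma module. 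The remaining task is to show this submodule is all of $M$, which I would attempt via a character/dimension count: the fixed point basis $\{[\widetilde{\ul{d}}]\}$ is parametrized by collections satisfying~(\ref{fei}), and via Lemma~\ref{dent} this matches the PBW basis of $U(\widehat{\fgl}_n^-)$ acting on a lowest-weight vector (the Heisenberg direction accounting for the $n$-periodic part, and the $\widehat{\fsl}_n$ root vectors accounting for the rest).

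The hardest step, and the reason this remains a conjecture in the paper, is (i): proving the Heisenberg relation $[a_p,a_{-p}]=p(n+\hbar'/\hbar)$ requires a genuine intersection computation on $\sE_{\ul{d},p\delta}\times_{\CP_{\ul{d}+p\delta}}{}^\sT\!\sE_{\ul{d}+p\delta,p\delta}$, and the irreducible components of $\sE_{\ul{d},p\delta}$ (only the closure of $\sE_{\ul{d},p\delta}^\circ$) are not smooth in general, so naive equivariant localization does not immediately apply. Handling the excess intersection and matching the scalar $p\,C'$ precisely is where the real work lies, and is likely what obstructs a complete proof absent further input such as a direct comparison with D.~Uglov's Yangian action~\cite{u} on integrable modules via a specialization argument.
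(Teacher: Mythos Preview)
The statement is a \emph{conjecture} in the paper, not a theorem: the paper offers no proof, only the attribution to A.~Kuznetsov and the remark (preceding~\ref{kuzn}) that $M$ is \emph{not} isomorphic to the universal Verma module over $\widehat{\fsl}_n$. So there is nothing to compare your proposal against; the paper simply leaves it open.

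That said, your outline is a sensible plan of attack, and you have correctly located the genuine obstruction. A few comments on the sketch itself. Your step (iii) and the lowest-weight-vector checks in (b) are fine and essentially formal. Your step (ii) is plausible but not as routine as you suggest: the fixed-point formula for $a_{\pm m}$ is not available in the paper, and the closures $\sE_{\ul{d},m\delta}$ of the indecomposable-quotient loci are typically singular, so ``residue calculus on the equivariant Euler class'' is not well-defined without first resolving or stratifying these cycles. The same issue is what makes (i) hard, as you say; the Baranovsky--Nakajima argument relies on the middle-dimensionality and irreducibility of the Heisenberg correspondence together with a smooth model for the self-intersection, and here one has $n$ irreducible components already for the open stratum (Proposition~7.8 of~\cite{fgk}), so the excess-intersection bookkeeping is genuinely more involved.

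For (b), your character argument is incomplete: you assert that the set of collections satisfying~(\ref{fei}) with fixed $\ul{d}$ matches a PBW basis of $U(\widehat{\fgl}_n{}^{\!-})$, but this bijection is not established anywhere in the paper (Theorem~\ref{okun} treats the \emph{integrable} specialization via cylindric plane partitions, which is a different count). You would need to produce that combinatorial identity separately. Granting it, the rest of your argument for (b)---generic weights force the universal Verma to be simple, hence the map from it to $M$ is injective, and equality of characters then gives surjectivity---is correct.
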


\begin{rem}
\label{baran}
{\em V.~Baranovsky~\cite{ba} has defined the action of Heisenberg algebra on 
the cohomology of Giesecker moduli spaces (of torsion free sheaves on surfaces)
via certain correspondences $\sB_{d,m}$. More precisely, let $\fM_{n,d}$ be 
the moduli space of torsion free sheaves
on $\bC\times\bX$ of rank $n$ and second Chern class $d$, trivialized at
$\bC\times\infty_\bX\cup\infty_\bC\times\bX$ (see~\cite{nak},~section~2).
We have an evident morphism $\eta:\ \CP_{\ul{d}}\to\fM_{n,d_0}$ 
(forgetting the flag). The correspondence $\sB_{d,m}\subset\fM_{n,d}\times
\fM_{n,d+m}$ is formed by the pairs $(\CF,\CF')$ such that $\CF\supset\CF'$,
and the quotient $\CF/\CF'$ is supported at a single point $x$ of $\BA^2$.
We consider the open piece $\overset{\circ}\sB_{d,m}$ given by the condition
that $x$ lies out of $\bC\times0_\bX$. The inverse image of 
$\overset{\circ}\sB_{d_0,m}$ under $\eta$ gives a correspondence 
$\overset{\circ}\sB_{\ul{d},m\delta}\subset\CP_{\ul{d}}\times\CP_{\ul{d}+m\delta}$.
It seems likely that as $x$ tends to $\bC\times0_\bX$, the correspondence
$\overset{\circ}\sB_{\ul{d},m\delta}$ tends to $\sE_{\ul{d},m\delta}$.}
\end{rem}

\subsection{Affine Yangian}
\label{guay}
From now on we impose the restriction $n>2$.
Let $(a_{kl})_{1\leq k,l\leq n}=\widehat{A}_{n-1}$ stand for the
Cartan matrix of
$\widehat{\fsl}_n$. The Yangian $Y(\widehat{\fsl}_n)$ is the free
$\BC[\hbar]$-algebra generated
by $\bx_{k,r}^\pm,\bh_{k,r},\ 1\leq k\leq n,\ r\in\BN$, with the
relations~(\ref{11}--\ref{15}) where $k,l$ are understood as residues
modulo $n$, so that for instance if $k=n$ then $k+1=1$.

The affine Yangian $\widehat Y$ of type $\widehat{A}_{n-1}$ is the free
$\BC[\hbar,\hbar']$-algebra generated by
$\bx_{k,r}^\pm,\bh_{k,r},\ 1\leq k\leq n,\ r\in\BN$ with the same relations
as in $Y(\widehat{\fsl}_n)$ except for relations~(\ref{12},\ref{14}) for the
pairs $(k,l)=(n,1),(1,n)$. These relations are modified as follows.
We introduce the shifted generating series
$\bh_n(u-\frac{\hbar'}{\hbar}-\frac{n}{2})=:{}'\bh_n(u):=
1+\sum_{r=0}^\infty{}'\bh_{n,r}\hbar^{-r}u^{-r-1};\
\bx_n^\pm(u-\frac{\hbar'}{\hbar}-\frac{n}{2})=:{}'\bx_n^\pm(u):=
\sum_{r=0}^\infty{}'\bx_{k,r}^\pm\hbar^{-r}u^{-r-1}$.
Now the new relations read
\begin{equation}
\label{122}
2['\bh_{n,r+1},\bx_{1,s}^\pm]-2['\bh_{n,r},\bx_{1,s+1}^\pm]=
\mp\hbar('\bh_{n,r}\bx_{1,s}^\pm+\bx_{1,s}^\pm{}{}'\bh_{n,r}),
\end{equation}
\begin{equation}
\label{123}
2[\bh_{1,r+1},{}'\bx_{n,s}^\pm]-2[\bh_{1,r},{}'\bx_{n,s+1}^\pm]=
\mp\hbar(\bh_{1,r}{}'\bx_{n,s}^\pm+{}'\bx_{n,s}^\pm{}\bh_{1,r}),
\end{equation}
\begin{equation}
\label{144}
2['\bx_{n,r+1}^\pm,\bx_{1,s}^\pm]-2['\bx_{n,r}^\pm,\bx_{1,s+1}^\pm]=
\mp\hbar('\bx_{n,r}^\pm\bx_{1,s}^\pm+\bx_{1,s}^\pm{}'\bx_{n,r}^\pm).
\end{equation}

Thus we have $Y(\widehat{\fsl}_n)=\widehat{Y}/(\hbar'+\frac{n\hbar}{2})$.

\begin{rem}
\label{ref}
{\em It is possible to define the Yangian $Y(\widehat\fsl_2)$ modifying the 
Serre relations~(\ref{15}). However, the above definition of the affine Yangian
$\widehat Y$ of type $\widehat{A}_{n-1}$ makes no sense for $n=2$. We do not
know any reasonable definition of $\widehat Y$ for $n=2$.}
\end{rem}

Note that $\widehat{Y}$ is isomorphic to $\widehat{\mathbf Y}_{\beta,\lambda}$
introduced in~\cite{g}~Definition~3.3. Indeed, the Yangian $\widehat{\mathbf Y}_{\beta,\lambda}$ is generated by $X_{k,r}^{\pm}$ and $H_{k,r}^{\pm}$, satisfying the relations~(\ref{11},\ref{13},\ref{15}) and some modification of
the relations~(\ref{12},\ref{14}). These modified relations can be rewritten in terms of the generating series $X_k^{\pm}(u)=\sum\limits_{r=0}^{\infty}X_{k,r}^{\pm}\lambda^{-r}u^{-1-r}$, $H_k(u)=\sum\limits_{r=0}^{\infty}H_{k,r}^{\pm}\lambda^{-r}u^{-1-r}$. The series $X_k^{\pm}(u)$, $H_l(u)$ satisfy the relations~(\ref{12'},\ref{14'}) except that for the
pairs $(k,l)=(1,n),(n,n-1)$ the relations are modified in the following way:

\begin{equation}
\label{12''}
\partial_u\partial_vH_k(u)X_l^\pm(v)(2u-2v\pm 1-\frac{2\beta}{\lambda}+1)=-\partial_u\partial_vX_l^\pm(v)H_k(u)(2v-2u\pm 1+\frac{2\beta}{\lambda}-1)
\end{equation}

\begin{equation}
\label{12'''}
\partial_u\partial_vH_l(u)X_k^\pm(v)(2u-2v\pm 1+\frac{2\beta}{\lambda}-1)=-\partial_u\partial_vX_k^\pm(v)H_l(u)(2v-2u\pm 1-\frac{2\beta}{\lambda}+1)
\end{equation}

\begin{equation}
\label{14''}
\partial_u\partial_vX_k^\pm(u)X_l^\pm(v)(2u-2v\pm 1-\frac{2\beta}{\lambda}+1)=-\partial_u\partial_vX_l^\pm(v)X_k^\pm(u)(2v-2u\pm 1+\frac{2\beta}{\lambda}-1)
\end{equation}

The isomorphism $\widehat{Y}\to\widehat{\mathbf Y}_{\beta,\lambda}$
takes $\hbar$ to $\lambda$, and $\hbar'$ to $-\frac{n\lambda}{2}+2\beta-\lambda$. It is
defined on the generating series as
\begin{equation}
\label{iso111} \bx_k^\pm(u)\mapsto X_k^\pm(u),\ \bh_k(u)\mapsto
H_k(u)\ \text{for}\ k\ne n,
\end{equation}
\begin{equation}
\label{iso222} \bx_n^\pm(u-\frac{\hbar'}{2\hbar}-\frac{n}{4})\mapsto X_n^\pm(u),\
\bh_n(u-\frac{\hbar'}{2\hbar}-\frac{n}{4})\mapsto H_n(u).
\end{equation}

\subsection{The action of the affine Yangian generators}
\label{quotients}
For any $m\leq i\in\BZ$ we will denote by $\ul{\CW}{}_{mi}$ the
quotient $\ul{\CF}{}_i/\ul{\CF}{}_m$ of the tautological
vector bundles, living on $\CP_{\ul{d}}\times\bC\subset\CP_{\ul{d}}\times\bS$.
Once again, $\pi:\CP_{\ul{d}}\times(\bC \ \backslash \infty) \rightarrow
\CP_{\ul{d}}$ denotes the standard projection. Let us consider:

\begin{equation}
\label{eqn:def}
\ba_{mi}(u)=u^{i-m}\cdot\bp_*(c(\pi_*(\ul{\CW}_{mi}|_{\bC \backslash \infty}), 
(-u\hbar)^{-1})\cdot\bq^*):\ M_{\ul{d}}\to M_{\ul{d}}[[u^{-1}]][u]
\end{equation}

\begin{cor}
\label{faktory}
The expression
$\bh_i(u):=\ba_{mi}(u+\frac{i-1}{2})^{-1}\ba_{mi}
(u+\frac{i+1}{2})^{-1}\ba_{m,i-1}(u+\frac{i-1}{2})
\ba_{m,i+1}(u+\frac{i+1}{2}):\ M_{\ul{d}}\to M_{\ul{d}}[[u^{-1}]]$
is independent of $m<i$.
\end{cor}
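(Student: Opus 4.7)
My plan is to follow the same strategy as in Corollary~\ref{faktor}, whose proof proceeded by reduction to an auxiliary moduli stack together with the use of relation~(\ref{13}). Denote the expression in the statement by $\bh_{mi}^{\aff}(u)$. Since relation~(\ref{13}) is unmodified in the passage from $Y(\widehat{\fsl}_n)$ to $\widehat Y$, and since the operators $\bx_{k,r}^\pm$ are defined by the same correspondence formulas (\ref{ddvas}), (\ref{ttris}) as in the non-affine case (with the line bundles $\sL'_i$ on $\sE_{\ul{d},i}$), it suffices to establish
\[
\bh_{mi,r+s}^{\aff} \;=\; [\bx^+_{i,r},\bx^-_{i,s}]
\]
for all $r,s\geq 0$. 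The right-hand side is manifestly independent of $m$, since it only involves the correspondence $\sE_{\ul{d},i}$, the line bundle $\sL'_i$, and the projections to $\CP_{\ul d}$ and $\CP_{\ul d\pm i}$.

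To carry out this identification, I would construct the parabolic-sheaf analog of the moduli stack $\fZ_n$ of \cite{bf}~3.11, parametrizing nested triples $\CF'_\bullet\subset\CF_\bullet\subset\CF''_\bullet$ with the appropriate degree jumps concentrated at index $i$. Reduction to this stack splits the composition $\bx^+_{i,r}\bx^-_{i,s}-\bx^-_{i,s}\bx^+_{i,r}$ into a diagonal piece, which via an excess-intersection calculation produces a Chern-class expression in the tautological bundles at levels $i-1,i,i+1$ — precisely the expression $\bh^{\aff}_{mi,r+s}$ after expanding the definition of $\ba_{mi}(u)$ and using the Künneth decomposition — plus an off-diagonal piece that vanishes by a dimension/support argument. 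This is formally the same computation as in \cite{bf}~3.8--3.10, with the only new ingredient being the $\hbar'$-weights in the $\bX$-direction carried by the normal bundles.

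Alternatively, and perhaps more transparently, one can argue by fixed-point localization, bypassing the auxiliary stack. Using the explicit fixed-point description from Lemma~\ref{evi} and the weights $p_{ij}$ of Theorem~\ref{affeigin}, the restriction of $\ul\CW_{mi}=\ul\CF_i/\ul\CF_m$ to a fixed point $[\widetilde{\ul d}]$ decomposes as a $T$-equivariant sum of line bundles on $\bC$ indexed by $m<j\leq i$. Consequently $\ba_{mi}(u)$ acts diagonally with eigenvalue a product over the same index range of linear factors in $u$ built from the $p_{ij}$; then the ratio $\ba_{m,i-1}(u)/\ba_{mi}(u)$ telescopes to a single factor involving $p_{ii}$ only, and $\ba_{m,i+1}(u)/\ba_{mi}(u)$ to a single factor involving $p_{i+1,i+1}$ only — all reference to $m$ cancels.

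The main obstacle is the careful bookkeeping of the $\hbar'$-shifts that wrap periodically around the affine Dynkin diagram (encoded in the $\lfloor -j/n\rfloor\hbar'$ summand of $p_{ij}$) and the verification that the two Künneth components $c_r^{(r)}(\ul\CW_{mi})$ and $c_r^{(r-1)}(\ul\CW_{mi})$ assemble, after the twist by $(-\hbar)^{-r}$ and the shift $u\mapsto u+\tfrac{i\pm1}{2}$, into exactly the right linear factors at fixed points. Once this identification is pinned down, the telescoping (or equivalently, the commutator calculation) is routine.
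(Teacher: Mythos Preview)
Your first approach --- showing $\bh_{mi,r+s}^{\aff} = [\bx^+_{i,r},\bx^-_{i,s}]$ via a parabolic-sheaf analogue of the stack argument --- is essentially the paper's approach. The paper phrases it as ``follows from Corollary~\ref{faktor} by reduction to the stack $\fZ_N$'': rather than redoing the excess-intersection computation from scratch in the affine setting, it uses $\fZ_N$ to reduce the affine identity to the already-established finite one. Your elaboration of what this reduction entails is accurate.

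Your second approach (fixed-point localization) is a legitimate and arguably more elementary alternative, and indeed the paper later records the relevant eigenvalue formula in the proof of Theorem~\ref{affei}: the eigenvalue of $\ba_{mi}(u)$ on $[\widetilde{\ul d}]$ is $\prod_{j\le i}(u-p_{ij})\cdot\prod_{k\le m}(u-p_{mk})^{-1}$. However, your account of the cancellation mechanism is wrong. The ratio $\ba_{m,i-1}(u)/\ba_{mi}(u)$ does \emph{not} telescope to a single factor involving $p_{ii}$ only: its eigenvalue is
\[
\frac{\prod_{j\le i-1}(u-p_{i-1,j})}{\prod_{j\le i}(u-p_{ij})},
\]
which involves the entire families $\{p_{i-1,j}\}$ and $\{p_{ij}\}$, and these do not cancel against each other since $p_{i-1,j}\neq p_{ij}$ in general. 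The actual mechanism is different: each of the four factors $\ba_{m,*}(u+\text{shift})^{\pm1}$ contributes a factor $\prod_{k\le m}(u+\text{shift}-p_{mk})^{\mp1}$, and these four contributions cancel \emph{pairwise} (the two with shift $\tfrac{i-1}{2}$ against each other, and likewise for $\tfrac{i+1}{2}$). What remains is built solely from $\prod_{j\le i-1}(\cdot-p_{i-1,j})$, $\prod_{j\le i}(\cdot-p_{ij})$, and $\prod_{j\le i+1}(\cdot-p_{i+1,j})$, hence is visibly independent of $m$. So the localization route does work, but via cancellation of the $m$-dependent denominators rather than the telescoping you describe.
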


The proof follows from Corollary~\ref{faktor} by reduction to the stack
$\fZ_N$, cf.~\cite[Section~3.11]{bf}, and the proof of~Corollary~\ref{faktor}. 
Alternatively, it can be proved by a direct computation which is a rational
version of the trigonometric computation in the proof 
of~\cite[Corollary~4.12]{ts}.
\qed

\bigskip

Recall that $q$ stands for the character of
$\widetilde{T}\times\BC^*\times\BC^*:\ (\ul{t},v,c)\mapsto v^2$.
We define the line bundle $\sL'_k:=q^{\frac{1-k}{2}}\sL_k$ on the
correspondence $\sE_{\ul{d},k}$, that is $\sL'_k$ and $\sL_k$ are
isomorphic as line bundles
but the equivariant structure of $\sL'_k$ is obtained
from the equivariant structure of $\sL_k$ by the twist by the
character $q^{\frac{1-k}{2}}$.

For $1\leq k\leq n$ we consider the following
generating series of operators on $M$:

\begin{equation}
\label{razz}
\bh_k(u)=:1+\sum_{r=0}^\infty\bh_{k,r}\hbar^{-r}u^{-r-1}:\
M_{\ul{d}}\to M_{\ul{d}};
\end{equation}

\begin{equation}
\label{dvasstriss}
\bx_{k}^\pm(u)=:\sum_{r=0}^\infty\bx^\pm_{k,r}\hbar^{-r}u^{-r-1}:\
M_{\ul{d}}\to M_{\ul{d}\mp k}[[u^{-1}]],
\end{equation}

where

\begin{equation}
\label{dvass} \bx_{k,r}^+:=\bp_*(c_1(\sL'_k)^r\cdot\bq^*):\
M_{\ul{d}}\to M_{\ul{d}-k};
\end{equation}

\begin{equation}
\label{triss} \bx_{k,r}^-:=-\bq_*(c_1(\sL'_k)^r\cdot\bp^*):\
M_{\ul{d}}\to M_{\ul{d}+k}.
\end{equation}

\begin{thm}
\label{vara}
For $n>2$ the operators $\bh_{k,r},\bx_{k,r}^\pm$ on $M$ defined
in~(\ref{razz},\ref{dvass},\ref{triss})
satisfy the relations in $\widehat{Y}$, i.e.
they give rise to the action of $\widehat{Y}$ on $M$.
\end{thm}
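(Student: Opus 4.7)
The strategy parallels the proof of Theorem~\ref{var}, but since no clean RTT presentation of $\widehat Y$ is available and $M$ is not known to be a universal Verma module (Conjecture~\ref{kuzn}), we cannot invoke a Drinfeld isomorphism as a shortcut. Instead, the plan is to verify all the defining relations~(\ref{11})--(\ref{15}) together with the modified boundary relations~(\ref{122})--(\ref{144}) by direct matrix-coefficient computation in the fixed-point basis $\{[\widetilde{\ul{d}}]\}$ of $M$, using Theorem~\ref{affeigin} together with Corollary~\ref{faktory}. The Cartan commutation~(\ref{11}) is immediate from the diagonality of the $\bh_{k,r}$ in this basis, and the weight-shift part of~(\ref{12}) at $r=0$ follows from the grading statement in Theorem~\ref{braval}.

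For pairs of indices $(k,l)$ with $\{k,l\} \not\supset \{1,n\}$, the relations~(\ref{12})--(\ref{15}) should reduce to the corresponding finite-Yangian relations already established in Theorem~\ref{var}. The enabling fact is Corollary~\ref{faktory}: the freedom to compute $\bh_k(u)$ using any $m<k$ lets one choose $m$ sufficiently small so that the relevant tautological sheaves $\ul{\CW}_{m,k-1}, \ul{\CW}_{mk}, \ul{\CW}_{m,k+1}$ and their $l$-counterparts are captured by data formally identical to that of a finite Laumon space. In this regime, the matrix coefficients of both sides agree with those verified in Theorem~\ref{var}. The commutator relation~(\ref{13}) for $k=l$ follows from the very definition of $\bh_k(u)$ as a product of the $\ba_{mi}$'s combined with a residue identification of $[\bx_k^+(u), \bx_k^-(v)]$ with the prescribed Cartan current, carried out analogously to the reasoning behind Corollary~\ref{faktor}.

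The genuinely new content is the verification of the boundary relations~(\ref{122})--(\ref{144}) for the pair $(n,1)$. The shift by $-\hbar'/\hbar - n/2$ in the definitions of ${}'\bh_n(u)$ and ${}'\bx_n^\pm(u)$ is designed to absorb the $\hbar'$-jump in the fixed-point weights $p_{ij} = -x_{j \bmod n} + d_{ij}\hbar + \lfloor -j/n \rfloor \hbar'$ that occurs as $j$ crosses a multiple of $n$. The plan is to expand both sides of~(\ref{122})--(\ref{144}) in the fixed-point basis using Theorem~\ref{affeigin} together with the explicit character $p_{ij} + \tfrac{1-k}{2}\hbar$ of the twisted line bundle $\sL'_k$ at a fixed point; after the shift, the quadratic identities should reduce to the same form as~(\ref{12}), (\ref{14}) but with the overall sign on the $a_{kl}$-term flipped, which is exactly the prescribed modification.

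The main obstacle is twofold: the careful bookkeeping of the floor function $\lfloor -j/n \rfloor$ and the $q^{(1-k)/2}$-twists across all relations, and the verification of the Serre relation~(\ref{15}) at the affine node. The restriction $n>2$ enters in the latter: for $n=2$ the cyclic Dynkin diagram $\widehat{A}_1$ collapses to two nodes, each neighbour of the other in two different ways, and the cubic Serre identity has to be replaced by a more elaborate identity. For $n>2$, the two root indices appearing in~(\ref{15}) correspond to genuinely distinct adjacent nodes of the cyclic diagram, and the matrix-coefficient computation mirrors the one in the finite case without degeneracy.
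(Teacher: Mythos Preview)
Your approach is workable but more laborious than the paper's, and you miss the one-line observation that trivializes what you call ``the genuinely new content.'' The paper does not verify the boundary relations~(\ref{122})--(\ref{144}) by a separate matrix-coefficient computation. Instead it extends the operators $\bx_k^\pm(u),\bh_k(u)$ to all $k\in\BZ$ and observes the periodicity $\sL_{k-n}=q'\sL_k$ and $\ul{\CW}_{m-n,i-n}=q'\ul{\CW}_{mi}$, which immediately gives
\[
\bx_{k-n}^\pm(u)=\bx_k^\pm\!\left(u-\tfrac{\hbar'}{\hbar}-\tfrac{n}{2}\right),\qquad
\bh_{k-n}(u)=\bh_k\!\left(u-\tfrac{\hbar'}{\hbar}-\tfrac{n}{2}\right).
\]
In particular ${}'\bx_n^\pm(u)=\bx_0^\pm(u)$ and ${}'\bh_n(u)=\bh_0(u)$. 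Under this identification the modified relations~(\ref{122})--(\ref{144}) for the pair $(n,1)$ become \emph{verbatim} the unmodified relations~(\ref{12}),~(\ref{14}) for the adjacent pair $(0,1)$, and these follow from Theorem~\ref{var} by the same reduction to the stack $\fZ_N$ of~\cite{bf}~3.11 that you already invoke (implicitly, via Corollary~\ref{faktory}) for the interior pairs. So there is no separate boundary computation at all.

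Your plan to compute in the fixed-point basis using Theorem~\ref{affeigin} would succeed, but at the cost of the ``careful bookkeeping of the floor function'' you anticipate; the paper's periodicity argument packages exactly that bookkeeping into a single equivariant twist and eliminates it. Note also that the paper never appeals to Theorem~\ref{affeigin} in the proof of Theorem~\ref{vara}; the logical flow is the reverse (Theorem~\ref{affei} is \emph{deduced} from Theorem~\ref{affeigin} only afterward). Your remark on why $n>2$ is needed is correct and matches the paper's implicit reliance on Theorem~\ref{braval}.
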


\proof
For arbitrary $k\in\BZ$ we define $\bx_{k,r}^\pm:\ M_{\ul{d}}\to
M_{\ul{d}\mp k}$ by the same formulas~(\ref{dvass},\ref{triss}).

Let $q'$ stand for the character of $\widetilde{T}\times\BC^*\times\BC^*:\
(\ul{t},v,c)\mapsto c^2$. Then we have
$\ul{\CW}{}_{m-n,i-n}=q'\ul{\CW}{}_{mi}$,
that is $\ul{\CW}{}_{m-n,i-n}$ and $\ul{\CW}{}_{mi}$ are isomorphic as vector
bundles but the equivariant structure of $\ul{\CW}{}_{m-n,i-n}$ is obtained
from the equivariant structure of $\ul{\CW}{}_{mi}$ by the twist by the
character $q'$. It follows that
$\ba_{m-n,i-n}(u)=\ba_{mi}(u-\frac{\hbar'}{\hbar})$, and hence
$\bh_{k-n}(u)=\bh_k(u-\frac{\hbar'}{\hbar}-\frac{n}{2})$.

Similarly, for the equivariant line bundles on the correspondence
$\sE_{\ul{d},k}$ we have $\sL_{k-n}=q\sL_k$, and hence
$\bx_{k-n}^\pm(u)=\bx_k^\pm(u-\frac{\hbar'}{\hbar}-\frac{n}{2})$. In particular,
$\bx_0^\pm(u)=\bx_n^\pm(u-\frac{\hbar'}{\hbar}-\frac{n}{2})={}'\bx_n(u)$.

Now the relations~(\ref{122},\ref{123},\ref{144}) follow again from
Theorem~\ref{var} by reduction to the stack
$\fZ_N$, cf. section~3.11 of~\cite{bf}.  Alternatively, the desired relations
can be proved by direct computations which are rational versions of the 
trigonometric computations in the proof of~\cite[Theorem~4.13]{ts}.
\qed

Recall the weights $p_{ij}:=-x_{j\pmod{n}}+d_{ij}\hbar+
\lfloor\frac{-j}{n}\rfloor\hbar'$
introduced before Theorem~\ref{affeigin}.

\begin{thm}
\label{affei}
The matrix coefficients of the operators $\bx_{i,r}^\pm$ in the
fixed point basis $\{[\widetilde{\ul{d}}]\}$ of $M$ are as follows:
$$\bx^-_{i,r[\widetilde{\ul{d}},\widetilde{\ul{d}}{}']}=
(p_{ij}+\frac{1-i}{2}\hbar)^r\fe_{i[\widetilde{\ul{d}},\widetilde{\ul{d}}{}']}=$$
$$-\hbar^{-1}(p_{ij}+\frac{1-i}{2}\hbar)^r
\frac{p_{i-1,j}-p_{ij}}{p_{ii}-p_{ij}}\prod_{j\ne k\leq i-1}
\frac{p_{i-1,k}-p_{ij}}{p_{ik}-p_{ij}}$$ if
$d'_{i,j}=d_{i,j}+1$ for certain $j\leq i$ (note that almost all factors in
this product are equal to 1 due to the condition~(\ref{fei}); also, in case
$i=j$ the factor $\frac{p_{i-1,j}-p_{ij}}{p_{ii}-p_{ij}}$ is set to be 1);

$$\bx^+_{i,r[\widetilde{\ul{d}},\widetilde{\ul{d}}{}']}=
(p_{ij}-\frac{1+i}{2}\hbar)^r\ff_{i[\widetilde{\ul{d}},\widetilde{\ul{d}}{}']}=$$
$$\hbar^{-1}(p_{ij}-\frac{1+i}{2}\hbar)^r(p_{i+1,j}-p_{ij})(p_{i+1,i+1}-p_{ij})
\prod_{j\ne k\leq i}\frac{p_{i+1,k}-p_{ij}}{p_{ik}-p_{ij}}$$ if
$d'_{i,j}=d_{i,j}-1$ for certain $j\leq i$;

All the other matrix coefficients of $\bx_{i,r}^\pm$ vanish.

The eigenvalue of $\bh_i(u)$ on $[\widetilde{\ul{d}}]$ equals
$$
\prod_{j\le i}(u+\frac{i+1}{2}-p_{ij})^{-1}(u+\frac{i-1}{2}-p_{ij})^{-1}
(u+\frac{i+1}{2}-p_{i+1,j+1})(u+\frac{i-1}{2}-p_{i-1,j-1})
$$
\end{thm}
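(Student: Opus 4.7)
\medskip
\noindent\textbf{Proof plan.}
The two halves of the statement — matrix coefficients of $\bx^\pm_{i,r}$ and eigenvalues of $\bh_i(u)$ — are largely independent localization computations, and I would handle them separately.

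\medskip
For the matrix coefficients of $\bx^\pm_{i,r}$, the strategy is to reduce to Theorem~\ref{affeigin}. By definition,
$$\bx^-_{i,r[\widetilde{\ul{d}},\widetilde{\ul{d}}{}']}=-\bq_*\bigl(c_1(\sL'_i)^r\cdot\bp^*\bigr)[\widetilde{\ul{d}},\widetilde{\ul{d}}{}'],$$
and localization reduces this to evaluating $c_1(\sL'_i)^r$ as a scalar at the unique $\widetilde T\times\BC^*\times\BC^*$-fixed point of $\sE_{\ul{d},i}$ lying over $(\widetilde{\ul{d}},\widetilde{\ul{d}}{}')$, where $d'_{ij}=d_{ij}+1$ for a specific $j\le i$. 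Using the realization of Section~\ref{realization} together with the explicit fixed point formulas (\ref{swim}) and (\ref{relation lambdas}), I identify $\Gamma(\bC,\CF_i/\CF'_i)$ as a one-dimensional representation of character exactly $p_{ij}$: the $-x_{j\bmod n}$ is the framing weight of $w_{j\bmod n}$, the $d_{ij}\hbar$ comes from the $z$-power in $J_{\lambda}$, and $\lfloor-j/n\rfloor\hbar'$ comes from the shift $-(l-1)\bD_0$ attached to the $l$-th summand in~(\ref{explicit tilde fixed points}). The equivariant twist $\sL'_i=q^{(1-i)/2}\sL_i$ then yields $c_1(\sL'_i)=p_{ij}+\tfrac{1-i}{2}\hbar$, so
$$\bx^-_{i,r[\widetilde{\ul{d}},\widetilde{\ul{d}}{}']}=\bigl(p_{ij}+\tfrac{1-i}{2}\hbar\bigr)^r\fe_{i[\widetilde{\ul{d}},\widetilde{\ul{d}}{}']},$$
and the second equality follows from Theorem~\ref{affeigin}. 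The case of $\bx^+_{i,r}$ is parallel: at a fixed point of $\sE_{\ul{d}-i,i}$ with $d'_{ij}=d_{ij}-1$, the same identification now reads $c_1(\sL_i)=p_{ij}-\hbar$ (the weight involves the smaller value $d'_{ij}$), so $c_1(\sL'_i)=p_{ij}-\tfrac{1+i}{2}\hbar$, and the formula for $\ff_{i[\widetilde{\ul{d}},\widetilde{\ul{d}}{}']}$ from Theorem~\ref{affeigin} completes the computation.

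\medskip
For the eigenvalue of $\bh_i(u)$, I invoke Corollary~\ref{faktory} with an arbitrary (sufficiently negative) $m<i$ and compute the eigenvalue of each $\ba_{mi}(u)$ at $[\widetilde{\ul{d}}]$ by localization. The K\"unneth decomposition of the Chern classes of $\ul{\CW}_{mi}=\ul{\CF}_i/\ul{\CF}_m$ on $\CP_{\ul{d}}\times\bC$, combined with the fact that at a fixed point this quotient splits as a direct sum of rank one pieces indexed by $m<j\le i$ of equivariant characters $p_{ij}$ (again read off from (\ref{swim}) and (\ref{relation lambdas})), yields
$$\ba_{mi}(u)\bigl|_{[\widetilde{\ul{d}}]}=\prod_{m<j\le i}(u-p_{ij}).$$
Substituting into Corollary~\ref{faktory}, reindexing the $\ba_{m,i-1}$ product by $j\mapsto j-1$ and the $\ba_{m,i+1}$ product by $j\mapsto j+1$, and letting $m\to-\infty$ — which is legitimate because for $j\ll 0$ one has $d_{ij}=d_{i\pm 1,j\pm 1}=0$ and the resulting factors depend only on $j\bmod n$ and $\lfloor-j/n\rfloor$, producing matched cyclic cancellations between numerator and denominator in the formal ratio — gives the claimed product expression over $j\le i$.

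\medskip
The main technical obstacle is the weight computation for $c_1(\sL_i)$ and for the characters of $\ul{\CF}_i/\ul{\CF}_m$ at a fixed point. Both amount to carefully propagating the double cover picture of Section~\ref{realization} and the dictionary between $\blambda$, $\widetilde{\ul{d}}$ (Lemmas~\ref{evi} and \ref{dent}), and the decomposition (\ref{swim}) through the equivariant structure: what needs real care is the emergence of the summand $\lfloor-j/n\rfloor\hbar'$ in $p_{ij}$ from the degree twists $-(l-1)\bD_0$. Once this weight assignment is established, the rest of the argument is a bookkeeping exercise parallel to the finite-type proofs of Theorems~\ref{feigin} and \ref{var}.
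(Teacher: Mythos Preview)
Your approach is the same as the paper's: reduce $\bx^\pm_{i,r}$ to Theorem~\ref{affeigin} via the weight of $\sL'_i$ at a fixed point, and compute $\bh_i(u)$ from the eigenvalue of $\ba_{mi}(u)$ plugged into Corollary~\ref{faktory}. The treatment of $\bx^\pm_{i,r}$ is fine.

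There is, however, a slip in your $\bh_i(u)$ computation. You assert that at a fixed point the quotient $\ul{\CF}_i/\ul{\CF}_m$ splits as a direct sum of rank-one pieces with characters $p_{ij}$ for $m<j\le i$, hence $\ba_{mi}(u)|_{[\widetilde{\ul{d}}]}=\prod_{m<j\le i}(u-p_{ij})$. This is not correct for finite $m$: at a fixed point, $\CF_i$ and $\CF_m$ decompose as in~(\ref{swim}) into summands indexed by the same $l=1,\dots,n$, so $\CF_i/\CF_m$ has, in addition to the rank-one pieces for $m<j\le i$, torsion contributions $J_{\lambda^{il}}/J_{\lambda^{ml}}$ (or twists thereof) for the overlapping indices. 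In $K$-theory this gives
\[
\ba_{mi}(u)\bigl|_{[\widetilde{\ul{d}}]}\;=\;\prod_{j\le i}(u-p_{ij})\,\prod_{k\le m}(u-p_{mk})^{-1},
\]
which is the formula the paper uses. The two expressions differ precisely by $\prod_{j\le m}(u-p_{ij})/\prod_{k\le m}(u-p_{mk})$, and since $d_{ij}\ne d_{mj}$ in general for $j\le m$, this discrepancy is nontrivial.

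Fortunately this does not derail the final answer: in the four-term ratio of Corollary~\ref{faktory} the denominator factors $\prod_{k\le m}(u+\tfrac{i\pm1}{2}-p_{mk})$ cancel in pairs, independently of $i$. So with the correct formula the computation works for \emph{any} $m<i$, and your $m\to-\infty$ limit and the attendant cancellation argument become unnecessary.
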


\proof Follows immediately from Theorem~\ref{affeigin} and the definition
of $\bx_{i,r}^\pm$. The formula for $\bh_i(u)$ follows from the fact that
the eigenvalue of $\ba_{mi}(u)$ on $[\widetilde{\ul{d}}]$ is
$\prod_{j\le i}(u-p_{ij})\prod_{k\le m}(u-p_{mk})^{-1}$. \qed

\begin{thm}
\label{uglov}
$M$ is an irreducible $\widehat{Y}$-module.
\end{thm}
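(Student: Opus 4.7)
The plan is to combine the joint diagonalization of the Cartan subalgebra of $\widehat Y$ on the fixed-point basis with a connectivity argument for the raising and lowering operators $\bx_{i,r}^{\pm}$. The first step is to observe that by Theorem~\ref{affei} the eigenvalue of $\bh_i(u)$ on $[\widetilde{\ul d}]$ is a rational function in $u$ whose poles and zeros are linear expressions in the weights $p_{ij}$. Since $x_1,\ldots,x_n,\hbar,\hbar'$ are algebraically independent in the base field $\BC(\ft\oplus\BC\oplus\BC)$, distinct admissible collections $\widetilde{\ul d}$ produce distinct joint eigenvalue tuples, so the Cartan subalgebra separates the fixed-point basis. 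Any $\widehat Y$-submodule $M'\subseteq M$ is automatically Cartan-stable and therefore the $\BC(\ft\oplus\BC\oplus\BC)$-span of a subset $S\subseteq\{\widetilde{\ul d}\}$ of fixed points.

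The second step is to prove that $\widetilde{\ul 0}\in S$ whenever $M'\ne 0$. The vacuum $[\widetilde{\ul 0}]$ is the unique fixed point of degree $\ul d=0$, and since every $\bx^+_{k,r}$ strictly lowers the degree it annihilates $[\widetilde{\ul 0}]$. Conversely, pick some $\widetilde{\ul d}\in S$ with $\widetilde{\ul d}\ne\widetilde{\ul 0}$: choose a column $j$ containing a nonzero entry and let $i$ be the largest index in that column with $d_{ij}>0$, so that $(i,j)$ is a removable corner. By Theorem~\ref{affei}, $\bx^+_{i,r}[\widetilde{\ul d}]$ is a sum $\sum_{j'}\left(p_{ij'}-\tfrac{i+1}{2}\hbar\right)^{r}c_{ij'}[\widetilde{\ul d}{}^{(j')}]$ over the removable corners $(i,j')$ in row $i$, with each $c_{ij'}\ne 0$ and the base values $p_{ij'}-\tfrac{i+1}{2}\hbar$ pairwise distinct in $j'$ by algebraic independence of the equivariant parameters. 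A Vandermonde linear combination of $\{\bx^+_{i,r}[\widetilde{\ul d}]\}_{r=0}^{N-1}$ then isolates each $[\widetilde{\ul d}{}^{(j')}]$ inside $M'$; induction on $|\ul d|=d_0+\cdots+d_{n-1}$ gives $[\widetilde{\ul 0}]\in M'$.

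The third step, symmetric to the second, shows that $[\widetilde{\ul 0}]$ cyclically generates $M$. Given any admissible $\widetilde{\ul d}$, one builds a chain $\widetilde{\ul 0}=\widetilde{\ul d}{}^{(0)}\subsetneq\cdots\subsetneq\widetilde{\ul d}{}^{(N)}=\widetilde{\ul d}$ of admissible collections differing by single admissible box additions: at stage $s$ pick any $(i,j)$ with $d^{(s)}_{ij}<d_{ij}$ and $i$ minimal with this property in its column, so that $d^{(s)}_{i-1,j}=d_{i-1,j}\ge d_{ij}>d^{(s)}_{ij}$ and $d^{(s)}_{i+1,j}\le d^{(s)}_{ij}$ ensure admissibility is preserved after adding the box at $(i,j)$ (and its periodic translates). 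The same Vandermonde argument applied to $\bx^-_{i,r}$ then isolates the target at each step, so $[\widetilde{\ul d}]$ lies in $\widehat Y\cdot [\widetilde{\ul 0}]$. The main obstacle is the Vandermonde extraction, which rests critically on the algebraic independence of the equivariant parameters in $\BC(\ft\oplus\BC\oplus\BC)$; once that is established, the combinatorial steps of finding removable corners and admissible fillings are straightforward.
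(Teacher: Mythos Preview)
Your proof is correct and follows the same strategy as the paper: Cartan separation of the fixed-point basis via the distinct joint eigenvalues of the $\bh_{i,r}$, then connectivity through $[\widetilde{\ul 0}]$ using the nonvanishing matrix coefficients of Theorem~\ref{affei}. Your Vandermonde extraction, while correct, is redundant---once $M'$ is known to be the span of a subset $S$ of basis vectors, the fact that $\bx^\pm_{i,0}[\widetilde{\ul d}]=\sum_{j'}c_{ij'}[\widetilde{\ul d}{}^{(j')}]\in M'$ already forces each $[\widetilde{\ul d}{}^{(j')}]$ with $c_{ij'}\ne 0$ into $S$ by linear independence, which is exactly how the paper's (much terser) argument proceeds.
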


\proof
According to Theorem~\ref{affei}, the Gelfand-Tsetlin subalgebra of
$\widehat{Y}$ generated by $\bh_{i,r}$ acts diagonally in the basis
$\{[\widetilde{\ul{d}}]\}$ with pairwise distinct joint eigenvalues.
Therefore it suffices to check the following two things:
\begin{enumerate}
\item for each $[\widetilde{\ul{d}}]$ there is an index $i$ such that
$\bx_{i,0}^-[\widetilde{\ul{d}}]\ne0$;
\item for each $[\widetilde{\ul{d}}]\ne[\widetilde{\ul{0}}]$ there is an
index $i$ such that $\bx_{i,0}^+[\widetilde{\ul{d}}]\ne0$.
\end{enumerate}
Both follow directly from Theorem~\ref{affei}.
\qed


\subsection{Specialization of Gelfand-Tsetlin base}
\label{integrable} We fix a positive integer $K$ (a level). We
consider an $n$-tuple $\mu=(\mu_{1-n},\ldots,\mu_0)\in\BZ^n$ such
that
$\mu_0+K\geq\mu_{1-n}\geq\mu_{2-n}\geq\ldots\geq\mu_{-1}\geq\mu_0$.
We view $\mu$ as a dominant (integrable) weight of $\hat{\fgl}_n$ of
level $K$. We extend $\mu$ to a nonincreasing sequence
$\widetilde{\mu}=(\widetilde{\mu}_i)_{i\in\BZ}$ setting
$\widetilde{\mu}_i:=\mu_{i\pmod{n}}+\lfloor\frac{-i}{n}\rfloor K$.

We define a subset $D(\mu)$ ({\em affine Gelfand-Tsetlin patterns})
of the set $D$ of all collections
$\widetilde{\ul{d}}$ satisfying the conditions~(\ref{fei}) as
follows:
\begin{equation}
\label{ryb} \widetilde{\ul{d}}\in D(\mu)\ \on{iff}\
d_{ij}-\widetilde{\mu}_j\leq d_{i+l,j+l}-\widetilde{\mu}_{j+l}\
\forall\ j\leq i,\ l\geq0.
\end{equation}
We specialize the values of $x_1,\ldots,x_n,\hbar,\hbar'$ so that
\begin{equation}
\label{rybn} \hbar=1,\ \hbar'=-K-n,\ x_j=\widetilde{\mu}_j-j+1.
\end{equation}
We define the renormalized vectors
\begin{equation}
\label{renorm}
\langle\widetilde{\ul{d}}\rangle:=
C^{-1}_{\widetilde{\ul{d}}}[\widetilde{\ul{d}}]
\end{equation}
where $C_{\widetilde{\ul{d}}}$ is the product
$\prod_{w\in T_{\widetilde{\ul{d}}}\CP_{\ul{d}}}w$ of the weights of
$\widetilde{T}\times\BC^*\times\BC^*$ in the tangent space to $\CP_{\ul{d}}$
at the point $\widetilde{\ul{d}}$. The explicit formula for the multiset
$\{w\}$ is given in Proposition~\ref{torus char E} below. Namely, one has
to plug $\widetilde{\ul{d}}{}'=\widetilde{\ul{d}}$ into the formula of
Proposition~\ref{torus char E}, then expand it as a sum of Laurent monomials in
$\ul{t},q,q'$ with positive integer coefficients, and then replace each
monomial $q^b(q')^c\prod_{i=1}^nt_i^{2a_i}$ with the corresponding weight
$w=b\hbar+c\hbar'+\sum_{i=1}^na_ix_i$ (with multiplicity given by the
corresponding positive integer coefficient).

We define $V(\mu)$
as the $\BC$-linear span of the vectors
$\langle\widetilde{\ul{d}}\rangle$ for $\widetilde{\ul{d}}\in
D(\mu)$.

\begin{thm}
\label{rybni} The formulas of Theorem~\ref{affei} give rise to the
action of $\widehat{Y}/(\hbar-1,\hbar'+K+n)$ in $V(\mu)$.
\end{thm}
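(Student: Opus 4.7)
The plan is twofold: first to show that the matrix coefficients of Theorem~\ref{affei} give well-defined operators on $V(\mu)$ under the specialization~\eqref{rybn}, and second that $V(\mu)$ is stable under these operators and that the Yangian relations persist.

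\textbf{Step 1 (well-definedness after renormalization).} The formulas in Theorem~\ref{affei} are \emph{a priori} rational in $x_1,\ldots,x_n,\hbar,\hbar'$ and contain denominators of the form $p_{ik}-p_{ij}$ which may vanish under~\eqref{rybn}. I would first compute the effect of the renormalization $\langle\widetilde{\ul{d}}\rangle=C^{-1}_{\widetilde{\ul{d}}}[\widetilde{\ul{d}}]$ on the matrix coefficients: the coefficient of $\langle\widetilde{\ul{d}}{}'\rangle$ in $\bx^{\pm}_{i,r}\langle\widetilde{\ul{d}}\rangle$ equals $(C_{\widetilde{\ul{d}}}/C_{\widetilde{\ul{d}}{}'})$ times the coefficient in Theorem~\ref{affei}. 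Using the explicit tangent weight computation in Proposition~\ref{torus char E}, the ratio $C_{\widetilde{\ul{d}}}/C_{\widetilde{\ul{d}}{}'}$ is a quotient of linear factors in the $p_{ij}$'s which, upon cancellation with the factors appearing in Theorem~\ref{affei}, leaves a polynomial expression in the $p_{ab}$ that is manifestly regular at the specialization.

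\textbf{Step 2 (stability of $V(\mu)$).} The key combinatorial point is that the boundary of $D(\mu)$ inside $D$ coincides with the vanishing locus of a particular factor in the \emph{numerator} of the renormalized matrix coefficient. Concretely, suppose $\widetilde{\ul{d}}\in D(\mu)$ and $\widetilde{\ul{d}}{}'$ is obtained by changing $d_{ij}$ by $\pm 1$, with $\widetilde{\ul{d}}{}'\notin D(\mu)$. Then for some $l\geq 0$ the inequality~\eqref{ryb} with indices $(i,j)$ and $(i+l,j+l)$ becomes an equality on $\widetilde{\ul{d}}{}'$; substituting $x_j=\widetilde{\mu}_j-j+1$, $\hbar=1$, $\hbar'=-K-n$ into the definition of $p_{ij}$, this equality is exactly one of the coincidences $p_{i\pm 1,k}-p_{ij}=0$ that appears as a factor in the numerator of the formulas in Theorem~\ref{affei}. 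Hence the matrix coefficient vanishes under the specialization. Conversely, when $\widetilde{\ul{d}}{}'\in D(\mu)$, all factors in the numerator are nonzero and the coefficient is finite by Step~1. This shows $V(\mu)$ is preserved by the operators of Theorem~\ref{affei} and by $\bh_i(u)$ (whose eigenvalues are manifestly regular on $V(\mu)$).

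\textbf{Step 3 (relations and specialization).} By Theorem~\ref{vara}, the operators $\bh_{i,r},\bx^{\pm}_{i,r}$ satisfy the defining relations of $\widehat{Y}$ on the generic localized module $M$. These relations are polynomial identities in matrix entries with coefficients in $\BC[x,\hbar,\hbar']$; by Step~1 each such entry remains regular at the specialization~\eqref{rybn}, so the identities descend to the specialized action, and by Step~2 this action preserves $V(\mu)$. Since $\hbar'+K+n$ acts as zero and $\hbar-1$ acts as zero on the specialized module, the action factors through $\widehat{Y}/(\hbar-1,\hbar'+K+n)$.

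The main obstacle is Step~2: one must carefully translate the affine Gelfand--Tsetlin inequalities~\eqref{ryb} — involving the shifted differences $d_{ij}-\widetilde{\mu}_j$ versus $d_{i+l,j+l}-\widetilde{\mu}_{j+l}$ — into coincidences among the weights $p_{ij}$, bookkeeping both the $\lfloor -j/n\rfloor\hbar'$ shifts and the reduction of indices modulo $n$. Once this dictionary is set up correctly, identifying the specific numerator factor that vanishes when the relevant inequality fails is a finite case analysis that exhausts all boundary cases for $\bx^{-}_{i,r}$ and $\bx^{+}_{i,r}$ separately.
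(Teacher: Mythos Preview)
Your proposal is correct and follows essentially the same route as the paper. The paper's proof reduces to the same two checks as your Steps~1--2: that for $\widetilde{\ul{d}}\in D(\mu)$ the denominators of the renormalized matrix coefficients do not vanish, and that when $\widetilde{\ul{d}}{}'\notin D(\mu)$ the numerators do vanish; both are declared ``straightforward''. The one thing the paper does that you do not is to package the effect of the renormalization in a single clean identity: from~(\ref{ratiu}) it follows that
\[
\fe_{i\langle\widetilde{\ul{d}},\widetilde{\ul{d}}{}'\rangle}=-\ff_{i[\widetilde{\ul{d}}{}',\widetilde{\ul{d}}]},\qquad
\ff_{i\langle\widetilde{\ul{d}},\widetilde{\ul{d}}{}'\rangle}=-\fe_{i[\widetilde{\ul{d}}{}',\widetilde{\ul{d}}]},
\]
so the renormalized coefficients are again given by the explicit rational expressions of Theorem~\ref{affeigin}, only with the roles of $\fe$ and $\ff$ interchanged. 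This replaces your direct computation of $C_{\widetilde{\ul{d}}}/C_{\widetilde{\ul{d}}{}'}$ and the subsequent cancellation, and makes both the regularity and the vanishing checks completely parallel to the ones already visible in the unrenormalized formulas.
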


\proof It follows from~(\ref{ratiu}) below
that for the matrix coefficients in the
renormalized basis $\{\langle\widetilde{\ul{d}}\rangle\}$ we have
$\fe_{i\langle\widetilde{\ul{d}},\widetilde{\ul{d}}{}'\rangle}=
-\ff_{i[\widetilde{\ul{d}}{}',\widetilde{\ul{d}}]},\
\ff_{i\langle\widetilde{\ul{d}},\widetilde{\ul{d}}{}'\rangle}=
-\fe_{i[\widetilde{\ul{d}}{}',\widetilde{\ul{d}}]}$. We have to
check two things:

a) for $\widetilde{\ul{d}}\in D(\mu)$ the denominators of the matrix
coefficients
$\bx^\pm_{i,r\langle\widetilde{\ul{d}},\widetilde{\ul{d}}{}'\rangle}$
do not vanish;

b) for $\widetilde{\ul{d}}\in D(\mu),\ \widetilde{\ul{d}}{}'\not\in
D(\mu)$ the numerators of the matrix coefficients
$\bx^\pm_{i,r\langle\widetilde{\ul{d}},\widetilde{\ul{d}}{}'\rangle}$
do vanish. Both are straightforward. \qed

\begin{rem}
\label{suzuki}
{\em It seems likely that applying the Schur-Weyl functor of~\cite{g} to
$V(\mu)$ and then going to the limit $n\to\infty$ one obtains the 
``tableaux representations'' (see~section~3 of~\cite{s})
of the trigonometric Cherednik algebra of type $A$.}
\end{rem}

Restricting $V(\mu)$ to $U(\hat{\fsl}_n)\subset\widehat Y$ we obtain
the same named $\hat{\fsl}_n$-module. Recall the embedding
$\hat{\fsl}_n\subset\hat{\fgl}_n$ of~section~\ref{hypo}.

\begin{conj}
\label{hypot} The $\hat{\fsl}_n$-module $V(\mu)$ extends to the
irreducible integrable $\hat{\fgl}_n$-module with highest weight
$\mu$.
\end{conj}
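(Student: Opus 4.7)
The strategy is to identify $V(\mu)$ with the target module by constructing enough additional structure on it and then invoking uniqueness of irreducible integrable highest weight $\hat{\fgl}_n$-modules at fixed level. All of the input comes from Theorem~\ref{affei}, the combinatorics of $D(\mu)$, and the correspondences $\sE_{\ul{d},m\delta}$ of section~\ref{hypo}. As a first step I would verify that the vector $\langle\widetilde{\ul{0}}\rangle$ attached to the zero pattern is an $\hat{\fsl}_n$-highest weight vector of weight $\mu$. Under the specialization $\hbar=1,\ \hbar'=-K-n,\ x_j=\widetilde{\mu}_j-j+1$ the operators $\bx^+_{i,0}$ annihilate $\langle\widetilde{\ul{0}}\rangle$ (no $\widetilde{\ul{d}}{}'\in D(\mu)$ has $d'_{i,j}=-1$), and the eigenvalue of $\bh_i(u)$ read off from Theorem~\ref{affei} is seen to coincide with $\langle\mu,\fh_i\rangle$ after substituting the defining formula for $\widetilde{\mu}_j$.

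Next I would establish integrability and $\hat{\fsl}_n$-irreducibility. Condition~(\ref{ryb}) bounds how large each difference $d_{ij}-\widetilde{\mu}_j$ can be along the $\BZ$-shift diagonal, so repeated application of $\bx^-_{i,0}$ (which increments $d_{i,j}$) eventually produces a pattern on the boundary of $D(\mu)$, where the corresponding matrix coefficient of Theorem~\ref{affei} must vanish; the length of the $\fsl_2^{(i)}$-string matches $\langle\mu,\alpha_i^\vee\rangle+1$ by direct comparison of the specialized numerators, and a symmetric argument handles $\bx^+_{i,0}$. Irreducibility follows the pattern of Theorem~\ref{uglov}: the specialized joint eigenvalues of $\{\bh_{i,r}\}$ on $\{\langle\widetilde{\ul{d}}\rangle\}$ remain pairwise distinct, so every $\widehat Y$-submodule is a span of basis vectors, and the nonvanishing of at least one $\bx^\pm_{i,0}$ on each basis vector connects every $\langle\widetilde{\ul{d}}\rangle$ to $\langle\widetilde{\ul{0}}\rangle$.

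To promote the $\hat{\fsl}_n$-action to $\hat{\fgl}_n$, I would define the Heisenberg operators $a_m$ on $V(\mu)$ by restricting the correspondence operators $\sE_{\ul{d},m\delta}$ (specialized as in~(\ref{rybn})) to the span of $\{\langle\widetilde{\ul{d}}\rangle\}_{\widetilde{\ul{d}}\in D(\mu)}$; then verify (i) that they preserve $V(\mu)$, (ii) the Heisenberg commutation $[a_p,a_q]=\delta_{p,-q}p(n+\hbar'\hbar^{-1})|_{\hbar=1,\hbar'=-K-n}=\delta_{p,-q}p(-K)$, which is the expected central charge of a level $K$ integrable $\hat{\fgl}_n$-module, and (iii) that the $a_m$ commute with $\hat{\fsl}_n$. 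Once the $\hat{\fgl}_n$-relations hold, the vector $\langle\widetilde{\ul{0}}\rangle$ remains a highest weight vector (it is annihilated by $a_m,\ m>0$, by degree considerations since $\ul{d}(\widetilde{\ul{0}})-m\delta$ has negative entries), and uniqueness of the irreducible integrable highest weight $\hat{\fgl}_n$-module of weight $\mu$ closes the argument.

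The main obstacle is precisely the construction of the Heisenberg extension, since we have no explicit fixed-point formulas for $a_m$ analogous to Theorem~\ref{affei}; step (iii) above is essentially Conjecture~\ref{kuzn}(a) restricted to the specialization $V(\mu)$. A route that sidesteps Conjecture~\ref{kuzn} would be to compare graded characters directly: Theorem~\ref{okun} identifies the set $D(\mu)$ with a crystal of cylindric plane partitions~\cite{t}, and matching the resulting $\hat{\fsl}_n$-graded character of $V(\mu)$ with the restriction of the Kac--Weyl character of the target $\hat{\fgl}_n$-module would, combined with irreducibility over $\hat{\fsl}_n$ and the agreement of highest weights, force the two $\hat{\fsl}_n$-modules to be isomorphic and thereby transport the $\hat{\fgl}_n$-structure back to $V(\mu)$.
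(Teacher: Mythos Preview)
The statement you are attempting to prove is Conjecture~\ref{hypot}, and the paper contains no proof of it. Immediately after stating the conjecture the authors write: ``Though we do not know how to define the action of Heisenberg algebra on $V(\mu)$, we can prove the following weak version of Conjecture~\ref{hypot},'' and then prove only the character equality (Theorem~\ref{okun}). So there is nothing to compare your proposal against; the question is whether your proposal actually closes the gap the authors left open. It does not, and in fact one of your intermediate claims is incompatible with the conjecture itself.

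The problematic step is your assertion of $\hat{\fsl}_n$-irreducibility of $V(\mu)$. Recall that in~\ref{hypo} the algebra $\hat{\fgl}_n$ is defined as $(\fH\oplus\hat{\fsl}_n)/(C'-nC)$, so the Heisenberg piece $\fH$ commutes with $\hat{\fsl}_n$. An irreducible integrable $\hat{\fgl}_n$-module of positive level $K$ therefore restricts to $\hat{\fsl}_n$ as (irreducible integrable $\hat{\fsl}_n$-module)~$\otimes$~(Heisenberg Fock space), which is an infinite direct sum of copies of a single irreducible $\hat{\fsl}_n$-module. If the conjecture is true, $V(\mu)$ cannot be $\hat{\fsl}_n$-irreducible. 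Your argument for irreducibility actually only shows $\widehat Y$-irreducibility (which is what the method of Theorem~\ref{uglov} gives), and that is strictly weaker. This error propagates to your alternative route: the character match of Theorem~\ref{okun} together with genuine $\hat{\fsl}_n$-irreducibility would be a contradiction, not a proof; and even granting an abstract $\hat{\fsl}_n$-isomorphism, ``transporting the $\hat{\fgl}_n$-structure back'' is tautological and says nothing about whether the geometric operators $a_m$ of~\ref{hypo} realize it.

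What remains is exactly the obstacle you yourself identify: one must produce Heisenberg operators on $V(\mu)$ satisfying the relations of $\fH$ and commuting with $\hat{\fsl}_n$. That is the specialization of Conjecture~\ref{kuzn}(a), and neither you nor the paper supplies it.
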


\medskip

Though we do not know how to define the action of Heisenberg algebra
on $V(\mu)$, we can prove the following weak version of
Conjecture~\ref{hypot}.

\begin{thm}
\label{okun} The character of $V(\mu)$ equals the character of the
irreducible integrable $\hat{\fgl}_n$-module with highest weight
$\mu$.
\end{thm}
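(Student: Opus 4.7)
The plan is to prove the theorem by interpreting $V(\mu)$ combinatorially and then matching it with a known formula for the character of the integrable $\hat{\fgl}_n$-module $L(\mu)$.

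First I would rewrite the character of $V(\mu)$ explicitly. By definition $V(\mu)=\bigoplus_{\ul{d}}V(\mu)_{\ul{d}}$ where $V(\mu)_{\ul{d}}$ has basis $\{\langle\widetilde{\ul{d}}\rangle:\widetilde{\ul{d}}\in D(\mu),\ \ul{d}(\widetilde{\ul{d}})=\ul{d}\}$, so
\begin{equation*}
\chi(V(\mu))=\sum_{\widetilde{\ul{d}}\in D(\mu)}\prod_{i=0}^{n-1}z_i^{d_i(\widetilde{\ul{d}})}.
\end{equation*}
Thus the theorem reduces to a purely combinatorial identity: the generating function of $D(\mu)$ graded by $\ul{d}$ equals $\chi(L(\mu))$.

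Next I would convert the affine Gelfand--Tsetlin data $\widetilde{\ul{d}}=(d_{ij})$ into a cylindric object. Via Lemma~\ref{dent} the collection $\widetilde{\ul{d}}$ corresponds to a tuple $\blambda=(\lambda^{kl})$ of partitions satisfying~(\ref{pool}), i.e.\ the containments form an $n\times n$ ``cyclic'' diagram. Stacking these partitions along the diagonal of the cylinder $\BZ\times\BZ/nC\BZ$ (with the shift controlled by $\widetilde{\mu}$) converts each $\widetilde{\ul{d}}$ into a cylindric plane partition whose profile (asymptotic shape along the rays $i-j\to\infty$) is prescribed by the sequence $\widetilde{\mu}$. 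The level-$K$ condition~(\ref{ryb}), $d_{ij}-\widetilde{\mu}_j\leq d_{i+l,j+l}-\widetilde{\mu}_{j+l}$, is precisely the inequality expressing that after the diagonal shift by $\widetilde{\mu}$ the heights are weakly increasing along the cylinder, which is the cylindric plane partition axiom. I would check that the multidegree $\ul{d}(\widetilde{\ul{d}})$ corresponds to the natural $(n+1)$-fold grading on cylindric plane partitions by total volume and by the residues $j\pmod n$ of occupied boxes.

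Finally I would invoke the known character formula for cylindric plane partitions. By the results of~\cite{t} (and the Borodin--Okounkov correspondence between periodic Schur processes and integrable $\widehat{\fgl}_n$-modules), the generating function of cylindric plane partitions of the profile determined by $\widetilde{\mu}$ at level $K$ coincides with the principally specialized character of $L(\mu)$ up to a boson contribution, and after passing from the bosonic grading to the $\hat{\fsl}_n$-weight grading it matches $\chi(L(\mu))$ exactly. Combined with the previous steps this yields $\chi(V(\mu))=\chi(L(\mu))$.

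The main obstacle is the bookkeeping in the cylindric bijection: one must verify that the shift $d_{ij}\mapsto d_{ij}-\widetilde{\mu}_j$ built into the definition of $D(\mu)$ reproduces precisely the profile of a cylindric plane partition corresponding to the highest weight $\mu$ at level $K$, and that the natural grading on the combinatorial side matches the $\widehat{\fsl}_n$-weight grading on $V(\mu)$ coming from the action of $\bh_{i,0}$ (equivalently, from the operators $\fh_i$ of~(\ref{ein})). Once this dictionary is set up cleanly, the identification of characters follows from the cited combinatorial results without further analysis.
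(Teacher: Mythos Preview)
Your approach is essentially the same as the paper's: reduce to a weight-preserving bijection between $D(\mu)$ and Tingley's set $\fB_\mu$ of cylindric plane partitions with boundary $\mu$, and then appeal to Tingley's Theorem~4.16 that $\fB_\mu$ carries the $\widehat{\fsl}_n$-crystal of $L(\mu)$. The paper gives the bijection very concretely (view $\pi\in\fB_\mu$ in the 3D picture, shift each row in the $y$-direction so the boundary becomes the plane $y=0$, reflect in the plane $z=y$, and read off heights $d(\pi)_{ij}$); your description via Lemma~\ref{dent} and ``stacking along the diagonal'' is a plausible paraphrase of the same map, though you would still need to verify carefully that the conditions~(\ref{ryb}) translate into Tingley's cylindric axiom and that the grading matches.

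One genuine caution: in your last step you invoke ``the principally specialized character of $L(\mu)$ up to a boson contribution'' and a ``Borodin--Okounkov/periodic Schur process'' identity. That is the wrong tool. The Borodin product formula enumerates cylindric plane partitions by total volume only; it yields the $q$-character, not the full weight-graded character, and would not suffice here. What you actually need---and what the paper uses---is Tingley's crystal isomorphism (Theorem~4.16 of~\cite{t}), which gives equality of the full $\widehat{\fsl}_n$-weight multiplicities directly, with no ``boson correction'' to undo. Once you cite that result cleanly, your argument goes through and coincides with the paper's.
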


\proof We will use a combinatorial model for integrable
$\widehat{\fsl}_n$-crystals introduced by P.~Tingley in~\cite{t},
namely, the cylindric plane partitions model of section 4 of {\em
loc. cit.} We will denote by $\fB_\mu$ the set of cylindric plane
partitions with boundary $\mu$ ({\em loc. cit.},~Definition~4.6).
P.~Tingley introduces a structure of $\widehat{\fsl}_n$-crystal on
$\fB_\mu$ (section~4.2 of {\em loc. cit.}), and proves (Theorem~4.16
of {\em loc. cit.}) that this crystal is isomorphic to the
$\widehat{\fsl}_n$-crystal of the irreducible integrable
$\hat{\fgl}_n$-module with highest weight $\mu$.

Thus it remains to construct a weight-preserving bijection between
$D(\mu)$ and $\fB_\mu$ (adding a fourth combinatorial model to
Tingley's list). Namely, given a cylindric plane partition
$\pi\in\fB_\mu$ we view it in the three dimensional representation
of Figure~13 of {\em loc. cit.} We shift the boxes in each row
$x=x_0,\ z=z_0$ in $y$-direction so that the boundary (the
infinitely high wall) of $\pi$ becomes the plane $y=0$. We will
denote the resulting configuration of boxes by $\pi_{\on{red}}$.
Note that it is not a plane partition anymore: the heights of piles
are not necessarily nonincreasing in $x$-direction, though the
heights of piles in $y$-direction are still nonincreasing. Still we
can reconstruct $\pi$ from $\pi_{\on{red}}$ shifting back by
$\widetilde{\mu}$ in $y$-direction.

We will denote by $\pi^*_{\on{red}}$ the three dimensional picture
of $\pi_{\on{red}}$ reflected in the plane $z=y$. We denote the
height of pile of $\pi^*_{\on{red}}$ at $(x,y)$ by $h(\pi)_{x,y}$.
Here $x$ and $y$ are integers, and $y$ is nonnegative. Finally, we
set $d(\pi)_{ij}:=h(\pi)_{-j,i-j}$. It is easy to see that the
condition of $\pi$ being a cylindric plane partition with boundary
$\mu$ is equivalent to the condition $d(\pi)\in D(\mu)$. Thus we
have constructed the desired bijection. This completes the proof of
the theorem. \qed

\section{Cohomology rings of affine Laumon spaces}

\subsection{Affine Gelfand-Tsetlin subalgebra}
\label{nuzhen}
We extend the scalars in the $\BC[\hbar,\hbar']$-algebra $\widehat Y$
to $\BC(\hbar,\hbar')[x_1,\ldots,x_n]$, and denote the resulting algebra by
$\widetilde Y$. We define the affine
Gelfand-Tsetlin subalgebra $\fA_{\on{aff}}\subset\widetilde{Y}$ as the
subalgebra generated by all the elements $\bh_{i,r}$.
In this section we construct a surjective homomorphism from the affine
Gelfand-Tsetlin subalgebra $\fA_{\on{aff}}\subset\widetilde{Y}$ to the
localized
equivariant cohomology ring
$^{\on{loc}}H^\bullet_{\widetilde{T}\times\BC^*\times\BC^*}(\CP_{\ul{d}}):=
H^\bullet_{\widetilde{T}\times\BC^*\times\BC^*}(\CP_{\ul{d}})
\otimes_{\BC[\hbar,\hbar']}\BC(\hbar,\hbar')$. Namely, let
$1_{\ul{d}}\in H^\bullet_{\widetilde{T}\times\BC^*\times\BC^*}(\CP_{\ul{d}})$
denote the unit element of the cohomology ring. For $a\in\fA_{\on{aff}}$ we
set $\psi(a):=a(1_{\ul{d}})$ (the action of Theorem~\ref{vara}).

\begin{thm}
\label{feiryb}
The homomorphism $\psi:\ \fA_{\on{aff}}\to{}
^{\on{loc}}H^\bullet_{\widetilde{T}\times\BC^*\times\BC^*}(\CP_{\ul{d}})$
is surjective.
\end{thm}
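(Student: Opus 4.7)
The plan is to use $\widetilde{T}\times\BC^*\times\BC^*$-equivariant localization to reduce the claim to a separation-of-fixed-points statement. By Thomason's localization theorem, restriction to the finite fixed-point set $\CP_{\ul{d}}^{\widetilde{T}\times\BC^*\times\BC^*}$ (described combinatorially in Lemmas~\ref{evi} and~\ref{dent}) embeds $^{\on{loc}}H^\bullet_{\widetilde{T}\times\BC^*\times\BC^*}(\CP_{\ul{d}})$ into $M_{\ul{d}}$, which is in turn identified with the product $\prod_{\widetilde{\ul{d}}}\on{Frac}(\BC[x,\hbar,\hbar'])$ of copies of the fraction field indexed by the fixed points. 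Under this identification $1_{\ul{d}}$ corresponds to the constant function $1$, and the generators $\bh_{i,r}$ of $\fA_{\on{aff}}$, sitting in the Gelfand-Tsetlin subalgebra, act diagonally in the fixed-point basis (Theorem~\ref{affei}). Consequently $\psi(a)=a(1_{\ul{d}})$ corresponds to the function $\widetilde{\ul{d}}\mapsto a_{\widetilde{\ul{d}}}$ recording the eigenvalue of $a$ at each fixed point.

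The key step is then to show that the joint eigenvalues of the $\bh_{i,r}$ separate the fixed points. Using the explicit formula of Theorem~\ref{affei} for the eigenvalue of $\bh_i(u)$ at $[\widetilde{\ul{d}}]$, one reads off the multiset $\{p_{ij}\}_{j\le i}$ from the poles and zeros of this rational function of $u$. Since $p_{ij}=-x_{j\pmod n}+d_{ij}\hbar+\lfloor -j/n\rfloor\hbar'$ and the equivariant parameters $x_1,\ldots,x_n,\hbar,\hbar'$ are algebraically independent, each $p_{ij}$ determines $j\pmod n$ and the integer $d_{ij}$ individually; letting $i$ vary recovers the entire collection $\widetilde{\ul{d}}$, so distinct fixed points produce distinct joint eigenvalues of the family $\{\bh_{i,r}\}$.

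Given separation, a standard Lagrange-interpolation argument shows that any unital subalgebra of $F^S$ (with $F$ an infinite field and $S$ finite) that separates the points of $S$ equals all of $F^S$. Applied to $S=\CP_{\ul{d}}^{\widetilde{T}\times\BC^*\times\BC^*}$ and $F=\on{Frac}(\BC[x,\hbar,\hbar'])$, this forces $\psi(\fA_{\on{aff}})$ to fill the whole function algebra $M_{\ul{d}}$. To deduce surjectivity onto the \emph{a priori} smaller ring $^{\on{loc}}H^\bullet(\CP_{\ul{d}})\subseteq M_{\ul{d}}$, I invoke Corollary~\ref{faktory}: it exhibits $\bh_i(u)$ as an explicit rational expression in the generating series $\ba_{mi}(u)$, whose coefficients are polynomials in the equivariant Chern classes of the tautological bundles $\ul{\CW}_{mi}$ on $\CP_{\ul{d}}$. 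Hence the coefficients of $\bh_i(u)\cdot 1_{\ul{d}}$ already lie in $^{\on{loc}}H^\bullet(\CP_{\ul{d}})$, and combining this containment with the interpolation argument gives $\psi(\fA_{\on{aff}})={}^{\on{loc}}H^\bullet(\CP_{\ul{d}})$.

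The main obstacle I expect is the separation step: one must simultaneously disentangle the three contributions $-x_{j\pmod n}$, $d_{ij}\hbar$ and $\lfloor -j/n\rfloor\hbar'$ to the weights $p_{ij}$, and verify that they are uniquely determined by the rational-function data of Theorem~\ref{affei} as $i,j$ range over all admissible pairs consistent with the fixed degree $\ul{d}$. Once this combinatorial disentangling is carried out, the rest of the argument is a routine application of equivariant localization and Lagrange interpolation.
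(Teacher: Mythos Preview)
Your localization/interpolation argument proves a strictly weaker statement than the theorem. The Lagrange-interpolation step ``a unital subalgebra of $F^S$ that separates points equals $F^S$'' requires the subalgebra to be an $F$-subalgebra. But $\fA_{\on{aff}}\subset\widetilde{Y}$ is by construction only a $\BC(\hbar,\hbar')[x_1,\ldots,x_n]$-algebra, and so is its image $\psi(\fA_{\on{aff}})\subset{}^{\on{loc}}H^\bullet(\CP_{\ul{d}})$; the target is likewise only localized at $\hbar,\hbar'$, not at the $x_i$. To carry out the interpolation you must invert the differences of eigenvalues $p_{ij}-p_{ij'}$, which are genuine polynomials in the $x_i$'s (for instance $x_{j'\!\!\pmod n}-x_{j\!\!\pmod n}+\cdots$). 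Hence your argument only yields $\psi(\fA_{\on{aff}})\otimes_{\BC(\hbar,\hbar')[x]}F=M_{\ul{d}}$, i.e.\ surjectivity after \emph{full} localization. The final sentence, deducing $\psi(\fA_{\on{aff}})={}^{\on{loc}}H^\bullet(\CP_{\ul{d}})$ from the containment $\psi(\fA_{\on{aff}})\subseteq{}^{\on{loc}}H^\bullet(\CP_{\ul{d}})\subseteq M_{\ul{d}}$ together with the fully-localized equality, is a non-sequitur: a full-rank $\BC(\hbar,\hbar')[x]$-subalgebra of a product of fields need not be the whole ring (think of $\{(f,g)\in\BC[x]^2:f(0)=g(0)\}\subsetneq\BC[x]^2$, which separates the two factors and becomes all of $\BC(x)^2$ after inverting $x$).

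This is exactly why the paper does \emph{not} argue via fixed-point separation. Instead it proceeds in two integral steps that never leave $\BC(\hbar,\hbar')[x_1,\ldots,x_n]$: first (\S\ref{h2a}--\S\ref{a2c}) it inverts the relation between the $\bh_i(u)$ and the series $\ba_{0i}(u)$, and then between the $\ba_{0i}(u)$ and the K\"unneth components of the Chern classes of the tautological sheaves $\ul{\CF}_i$---the only denominators that appear are in $\hbar,\hbar'$, which is why the partial localization suffices; second (\S\ref{andrei}--\S\ref{end0}) it shows, via a resolution-of-the-diagonal argument using the bundle $E$ and Kuznetsov's section $s$, that those K\"unneth components already generate $^{\on{loc}}H^\bullet(\CP_{\ul{d}})$. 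Your approach would recover the theorem if you could prove directly that $^{\on{loc}}H^\bullet(\CP_{\ul{d}})$ is already a product of copies of $F$ (equivalently, that the $\BC^*\times\BC^*$-fixed locus is finite), but in fact the weights in Proposition~\ref{torus char E} involve the $x_l$'s, so this is false.
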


The proof occupies the rest of the section.

\subsection{Modified generators}
\label{h2a}
First we are going to express the coefficients of $\ba_{0i}(u)$ as
polynomials in $\bh_{k,r}$. Note that $\ba_{00}(u)=1$ and
\begin{equation}\label{ai+n}\ba_{0,i+n}(u)\ba_{0,n}(u)^{-1}=\ba_{n,i+n}(u)=
\ba_{0i}(u+\frac{\hbar'}{\hbar}),\end{equation}
and hence it is sufficient to express $a_{0i}(u)$ for $i=1,\ldots,n$. We have
\begin{equation}\ba_{0i}(u+\frac{i-1}{2})^{-1}\ba_{0i}(u+\frac{i+1}{2})^{-1}
\ba_{0,i-1}(u+\frac{i-1}{2})
\ba_{0,i+1}(u+\frac{i+1}{2})=\bh_i(u).\end{equation}
By induction we obtain
\begin{equation}\label{ai}\ba_{0i}(u)=\prod\limits_{j=0}^{i-1}\ba_{01}(u-j)
\prod\limits_{j=1}^{i-1}\prod\limits_{l=1}^{i-j}\bh_j(u-l-\frac{j-1}{2}).
\end{equation}
Thus it remains to express $\ba_{01}(u)$. From~(\ref{ai+n}) for $i=1$
and~(\ref{ai}) for $i=n,\ n+1$, we have
\begin{equation}\label{a01}\ba_{01}(u-n)\prod\limits_{j=1}^{n}
\bh_j(u-n+\frac{j-1}{2})=\ba_{01}(u+\frac{\hbar'}{\hbar}).\end{equation}
For $\frac{\hbar'}{\hbar}\ne -n$ this equation uniquely determines
$\ba_{01}(u)$.

\begin{rem}
{\em The ``critical'' value $\frac{\hbar'}{\hbar}=-n$ corresponds to the level $0$ Verma module, and, on the other hand, to the
specialization of the affine Yangian $\widehat{Y}$ to $\beta=-\frac{n\lambda}{4}+\frac{\lambda}{2}$. This corresponds to the degenerate trigonometric DAHA $\bH_{0,c}$ by the Schur-Weyl duality (see~\cite{g}). There is an additional relation on $\bh_i(u)$ for
$\frac{\hbar'}{\hbar}=-n$, namely, $\prod\limits_{j=1}^{n}
\bh_j(u-n+\frac{j-1}{2})=1$.}
\end{rem}

\subsection{K\"unneth components of characteristic classes}
\label{a2c}

Now we are going to express the K\"unneth components of characteristic
classes of the tautological bundles $\ul\CF{}_i$ on
$\CP_{\ul{d}}\times\bC\times\bX$
in terms of the coefficients of $\ba_{0k}(u)$ (for all $k$). We start by noting that our definition \eqref{eqn:def} is equivalent to the following: under the K\"unneth decomposition, one writes $c_j(\ul{\CW}{}_{mi})=:c_j^{(j)}(\ul{\CW}{}_{mi})\otimes1+
c_j^{(j-1)}(\ul{\CW}{}_{mi})\otimes\tau$
where $c_j^{(j)}(\ul{\CW}{}_{mi})\in
H^{2j}_{\widetilde{T}\times\BC^*}(\CP_{\ul{d}})$, and
$c_j^{(j-1)}(\ul{\CW}{}_{mi})\in
H^{2j-2}_{\widetilde{T}\times\BC^*}(\CP_{\ul{d}})$. By equivariant localization on
$\bC\ \backslash \ \infty \cong \bA^1$, it is not hard to prove:

\begin{equation}
\label{eqneqneqn}
\ba_{mi}(u)=u^{i-m}+\sum_{r=1}^\infty(-\hbar)^{-r}\left(c_r^{(r)}(\ul{\CW}{}_{mi})-\hbar c_r^{(r-1)}
(\ul{\CW}{}_{mi})\right)u^{i-m-r}.
\end{equation}

Let $\tau$ and $\tau'$ stand for the first Chern class of the line bundle $\CO(1)$ on
$\bC$ and $\bX$, respectively. Then we have $c_j(\ul\CF{}_i)=:c_j^{(j)}(\ul\CF{}_i)\otimes 1+
c_j^{(j-1)}(\ul\CF{}_i)\otimes\tau+c_j^{(j-1)'}(\ul\CF{}_i)\otimes\tau'+
c_j^{(j-2)}(\ul\CF{}_i)\otimes\tau\tau'$
for the K\"unneth components $c_j^{(j)}(\ul\CF{}_i),c_j^{(j-1)}(\ul\CF{}_i),
c_j^{(j-1)'}(\ul\CF{}_i),c_j^{(j-2)}(\ul\CF{}_i)$. In order to prove that
all the K\"unneth components just defined are expressible in terms of
the coefficients of all $\ba_{0k}(u)$, it suffices to do this for just one
tautological bundle $\ul\CF{}_0$.

For an equivariant vector bundle $\CF$ on $\CP_{\ul{d}}\times\bC\times\bX$
let $X^{\widetilde{\ul{d}},0,0}(\CF)$ denote the multiset of characters of
$\widetilde{T}\times\BC^*\times\BC^*$ at the fixed point
$(\widetilde{\ul{d}},0,0)\in\CP_{\ul{d}}\times\bC\times\bX$.
Similarly, we define the multisets $X^{\widetilde{\ul{d}},0,\infty}(\CF),
X^{\widetilde{\ul{d}},\infty,0}(\CF),
X^{\widetilde{\ul{d}},\infty,\infty}(\CF)$.
Let $e_j^{\widetilde{\ul{d}},0,0}(\CF)$ stand for the sum of products of
$j$ distinct elements of the multiset $X^{\widetilde{\ul{d}},0,0}(\CF)$.
Similarly, we define $e_j^{\widetilde{\ul{d}},0,\infty}(\CF),
e_j^{\widetilde{\ul{d}},\infty,0}(\CF),
e_j^{\widetilde{\ul{d}},\infty,\infty}(\CF)\in
\BC[\hbar,\hbar',x_1,\ldots,x_n]$. Let $e_j^{0,0}(\CF)$ be the diagonal
operator in the basis $\{[\widetilde{\ul{d}}]\}$ with eigenvalues
$e_j^{\widetilde{\ul{d}},0,0}(\CF)$. Similarly, we define
$e_j^{0,\infty}(\CF),e_j^{\infty,0}(\CF),e_j^{\infty,\infty}(\CF)$.

We have
\begin{multline}
\label{marazm}
e_j^{0,0}(\CF)=c_j^{(j)}(\CF)-\hbar c_j^{(j-1)}(\CF)-\hbar'c_j^{(j-1)'}(\CF)+
\hbar\hbar'c_j^{(j-2)}(\CF)\\
e_j^{0,\infty}(\CF)=c_j^{(j)}(\CF)-\hbar c_j^{(j-1)}(\CF)+
\hbar'c_j^{(j-1)'}(\CF)-\hbar\hbar'c_j^{(j-2)}(\CF)\\
e_j^{\infty,0}(\CF)=c_j^{(j)}(\CF)+\hbar c_j^{(j-1)}(\CF)-
\hbar'c_j^{(j-1)'}(\CF)-\hbar\hbar'c_j^{(j-2)}(\CF)\\
e_j^{\infty,\infty}(\CF)=c_j^{(j)}(\CF)+\hbar c_j^{(j-1)}(\CF)+
\hbar'c_j^{(j-1)'}(\CF)+\hbar\hbar'c_j^{(j-2)}(\CF)
\end{multline}

For an equivariant vector bundle $\CG$ on $\CP_{\ul{d}}\times\bC$
we spare the reader the
bulk of the similar self-explaining notation but just note that we have
\begin{equation}
\label{marasm}
e_j^0(\CG)=c_j^{(j)}(\CG)-\hbar c_j^{(j-1)}(\CG),\ \ \
e_j^\infty(\CG)=c_j^{(j)}(\CG)+\hbar c_j^{(j-1)}(\CG)
\end{equation}
whence
\begin{equation}
\label{alzheimer}
c_j^{(j)}(\CG)=\frac{1}{2}(e_j^\infty(\CG)+e_j^0(\CG)),\ \ \
c_j^{(j-1)}(\CG)=\frac{1}{2\hbar}(e_j^\infty(\CG)-e_j^0(\CG))
\end{equation}

If we have $\CG=\CF|_{\CP_{\ul{d}}\times\bC\times0}$ then
$e_j^{0,0}(\CF)=e_j^0(\CG)$, that is
\begin{equation}
\label{senile}
c_j^{(j)}(\CF)-\hbar c_j^{(j-1)}(\CF)-\hbar'c_j^{(j-1)'}(\CF)+
\hbar\hbar'c_j^{(j-2)}(\CF)=c_j^{(j)}(\CG)-\hbar c_j^{(j-1)}(\CG).
\end{equation}
The equality~(\ref{senile}) makes sense in the equivariant $K$-groups of
$\CP_{\ul{d}}\times\bC\times\bX$ and $\CP_{\ul{d}}\times\bC$, so it holds
true for any coherent sheaf $\CF$ on $\CP_{\ul{d}}\times\bC\times\bX$
and its (left derived) restriction $\CG$ to $\CP_{\ul{d}}\times\bC\times0$.
Let us take $\CF=\ul{\CF}{}_0$, so that $\CG=\ul{\CW}{}_{-n,0}$,
and then \eqref{eqneqneqn} shows that the RHS of~(\ref{senile}) is just a coefficient of the series
$\ba_{-n,0}(u)$.
Note that $e_j^{0,\infty}(\ul{\CF}{}_n),e_j^{\infty,0}(\ul{\CF}{}_n)$,
and $e_j^{\infty,\infty}(\ul{\CF}{}_0)$ are all equal to the $j$-th
elementary symmetric function of the variables $x_1,\ldots,x_n$.
Now it is easy to see that the system~(\ref{marazm}) can be solved
over the ring $\BC(\hbar,\hbar')[x_1,\ldots,x_n]$ to express the K\"unneth
components $c_j^{(j)}(\ul\CF{}_0),c_j^{(j-1)}(\ul\CF{}_0),
c_j^{(j-1)'}(\ul\CF{}_0),c_j^{(j-2)}(\ul\CF{}_0)$ in terms of the
coefficients of the series $\ba_{-n,0}(u)$ (or equivalently, in terms of the
coefficients of the series $\ba_{0n}(u)$).

\subsection{Okounkov's vector bundle $E$}
\label{andrei}
Now it remains to prove that
$^{\on{loc}}H^\bullet_{\widetilde{T}\times\BC^*\times\BC^*}(\CP_{\ul{d}})$
is generated
by the K\"unneth components of the Chern classes of the tautological bundles
$\ul{\CF}{}_i$. By Theorem~2.1 in \cite{es}, it is enough to show that the cohomology class of the diagonal $\Delta \subset \CP_{\ul{d}} \times \CP_{\ul{d}}$ is of the form
\begin{equation} \label{form of diagonal}
[\Delta]=\sum_i \alpha_i \boxtimes \beta_i,
\end{equation}
where $\alpha_i,\beta_i \in{}^{\on{loc}}H^\bullet_{\widehat{T}\times
\BC^*\times \BC^*}(\CP_{\ul{d}})$ are generated by the K\"unneth components of the Chern classes of the tautological bundles. The following sections are concerned with proving this fact: we will compute the class $[\Delta]$ and show that it is of the form \eqref{form of diagonal}. \\

Recall the setup of subsection~\ref{realization}, where we realized a
parabolic sheaf $\CF_\bullet$ as a single, $G=\BZ/n\BZ-$invariant sheaf
$\tilde \CF$. Following~\cite{eo}, consider the sheaf $E$ on
$\CP_{\ul{d}} \times \CP_{\ul{d}}$,
whose fiber above $(\CF_\bullet, \CF'_\bullet)$ is
$$
E|_{(\CF_\bullet, \CF'_\bullet)}=\textrm{Ext}^1_G(\tilde{\CF'}, \tilde{\CF}(-\bD_\infty))
$$
In the above, $\textrm{Ext}^1_G$ denotes the $G-$invariant part of the vector space in question. By the Kodaira-Spencer theorem, the restriction of $E$ to the diagonal $\Delta \subset \CP_{\ul{d}}\times \CP_{\ul{d}}$ coincides with the tangent bundle of $\Delta\cong \CP_{\ul{d}}$.

\begin{prop} \label{E bundle}
The sheaf $E$ is a vector bundle of rank $2(d_1+\dots+d_n)$ on $\CP_{\ul{d}} \times \CP_{\ul{d}}$. Note that the rank of $E$ equals half of the dimension of the base space.
\end{prop}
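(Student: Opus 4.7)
The plan is to realize $E$ as (the $G$-invariant part of) a relative $\Ext^1$ sheaf on $\CP_{\ul{d}} \times \CP_{\ul{d}}$, and then to verify the two vanishings that force local freeness of constant rank. Concretely, if $p: \bS \times \CP_{\ul{d}} \times \CP_{\ul{d}} \to \CP_{\ul{d}} \times \CP_{\ul{d}}$ is the projection and $\tilde{\mathcal{F}}, \tilde{\mathcal{F}}'$ denote the universal $G$-equivariant sheaves pulled back from the two factors, then $E = \bigl(R^1 p_* \mathcal{H}om(\tilde{\mathcal{F}}', \tilde{\mathcal{F}}(-\bD_\infty))\bigr)^G$. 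By cohomology and base change, combined with exactness of the $G$-invariants functor in characteristic zero, it suffices to check the fiberwise vanishings
\[
\Hom_G(\tilde{\CF'}, \tilde{\CF}(-\bD_\infty)) = 0 = \Ext^2_G(\tilde{\CF'}, \tilde{\CF}(-\bD_\infty))
\]
for every pair $(\CF_\bullet, \CF'_\bullet)$. Once this is done, $E$ is locally free of rank $-\chi_G$, which is constant in the family.

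For $\Ext^2_G$, Serre duality on $\bS \cong \BP^1 \times \BP^1$ (with canonical bundle $K_\bS = \CO(-2\bD_\infty)$) yields
\[
\Ext^2_G(\tilde{\CF'}, \tilde{\CF}(-\bD_\infty))^* \cong \Hom_G(\tilde{\CF}(-\bD_\infty), \tilde{\CF'}(-2\bD_\infty)) = \Hom_G(\tilde{\CF}, \tilde{\CF'}(-\bD_\infty)),
\]
which reduces to the same $\Hom_G$ vanishing with the roles of $\tilde{\CF}$ and $\tilde{\CF'}$ swapped. The $\Hom_G$ vanishing itself is the standard one for framed torsion-free sheaves: given $\phi: \tilde{\CF'} \to \tilde{\CF}(-\bD_\infty)$, its composition $\psi$ with the inclusion $\tilde{\CF}(-\bD_\infty) \hookrightarrow \tilde{\CF}$ restricts to zero on $\bD_\infty$, so under the framing identifying both sheaves with $W \otimes \CO$ in a neighborhood of $\bD_\infty$, $\psi$ is given there by a matrix of sections of $\CO(-\bD_\infty)$. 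Since $H^0(\BP^1 \times \BP^1, \CO(-\bD_\infty)) = H^0(\CO(-1,-1)) = 0$, this matrix is forced to vanish on a dense open set, and torsion-freeness of $\tilde{\CF'}$ then propagates the vanishing to all of $\bS$. A slicker alternative uses equivariance: the jump locus $\{\dim \Hom_G > 0\}$ is closed (upper semi-continuity) and $\widetilde{T} \times \BC^* \times \BC^*$-stable, hence contains a fixed point if non-empty, which is ruled out by a direct computation via \eqref{explicit tilde fixed points}.

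Once $E$ is locally free, its rank is computed by evaluating at the diagonal: the Kodaira-Spencer isomorphism $E|_\Delta \cong T\CP_{\ul{d}}$ recalled just before the proposition identifies the rank with $\dim \CP_{\ul{d}} = 2(d_0 + \dots + d_{n-1}) = 2(d_1 + \dots + d_n)$. The main obstacle is the global-to-local extension step in the $\Hom_G$ argument, because $\tilde{\CF'}, \tilde{\CF}$ need not be locally free along $\bD_0$; this is handled by torsion-freeness, so that vanishing on a dense open set propagates globally and the matrix calculation is only needed in a neighborhood of $\bD_\infty$. The equivariant-fixed-point alternative sidesteps this subtlety entirely, at the cost of an explicit check at the finitely many torus-fixed points described by Lemma~\ref{evi} and \eqref{explicit tilde fixed points}.
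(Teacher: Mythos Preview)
Your strategy matches the paper's exactly: show $\Hom_G$ and $\Ext^2_G$ vanish (the latter via Serre duality, reducing it to the former with the two sheaves swapped), conclude that $E=-\chi_G$ has fibers of constant dimension, and hence is locally free. The paper invokes the $\Hom$-vanishing for framed torsion-free sheaves just as tersely (``same trivialization at $\bD_\infty$, hence same first Chern class, hence $\Hom=0$''), and cites EGA~III for the passage from constant fiber dimension to local freeness.

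One caution about your primary $\Hom_G$ argument: the framing only trivializes $\tilde{\CF}$ and $\tilde{\CF'}$ \emph{on} $\bD_\infty$, not on a full neighborhood, so you cannot literally read $\psi$ as a matrix of global sections of $\CO(-\bD_\infty)$ and invoke $H^0(\bS,\CO(-1,-1))=0$. The standard fix is to pass to the double duals (locally free with the same $c_1$ and framing) and argue with determinants, or simply to use your equivariant alternative, which is perfectly sound. Your rank computation via the Kodaira--Spencer identification $E|_\Delta\cong T\CP_{\ul{d}}$ is actually cleaner than the paper's, which defers to a forward reference (Remark~\ref{rank of E}) that extracts the rank from the explicit character formula of Proposition~\ref{torus char E}.
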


\begin{proof} Since our sheaves are on the surface $\bS$, all $\textrm{Ext}^i$ groups vanish for $i>2$. However, in our particular case we can say more. Because $\tilde{\CF}$ and $\tilde{\CF'}$ have the same trivialization at $\bD_\infty$, they have the same first Chern class and thus $\textrm{Hom}(\tilde{\CF'},\tilde{\CF}(-\bD_\infty))=0$. Moreover, Serre duality implies that $\textrm{Ext}^2(\tilde{\CF'},\tilde{\CF}(-\bD_\infty))\cong \textrm{Hom}(\tilde{\CF},\tilde{\CF'}(-\bD_\infty))$, which vanishes for the same reason. This suggests that instead of $\textrm{Ext}^1$ one should consider the following functor of sheaves on $\bS$:
$$
\chi(\CS', \CS):=\textrm{Hom}(\CS',\CS)-\textrm{Ext}^1(\CS',\CS)+\textrm{Ext}^2(\CS',\CS).
$$
Then the above discussion implies that
$$
E|_{(\CF_\bullet, \CF'_\bullet)}=\textrm{Ext}^1_G(\tilde{\CF'}, \tilde{\CF}(-\bD_\infty))=-\chi_G(\tilde{\CF'}, \tilde{\CF}(-\bD_\infty)),
$$
where $\chi_G$ denotes the $G-$invariant part of the vector space in question. Therefore, Corollary~7.9.9 of~\cite{mum} implies that the dimension of the fibers of $E$ is
constant, and therefore $E$ is a vector bundle. The fact that its rank equals $2(d_1+\dots+d_n)$ follows from
Remark~\ref{rank of E}.
\end{proof}

\subsection{A section of $E$}
\label{aku}
Now we will construct a regular section of $E$ that vanishes on
the diagonal $\Delta$. The following construction was proposed by
A.~Kuznetsov. For any $(\CF_\bullet, \CF'_\bullet)\in \CP_{\ul{d}}\times \CP_{\ul{d}}$, consider the short exact sequence
$$
0\rightarrow \tilde{\CF}(-\bD_\infty)\rightarrow \tilde{\CF}\rightarrow \tilde{\CF}|_{\bD_\infty}\rightarrow 0
$$
Apply the functor $\textrm{Hom}_G(\tilde{\CF'}, \cdot)$ to the above short exact sequence, and we will obtain the following long exact sequence
\begin{equation} \label{les for s}
\cdots \rightarrow \textrm{Hom}_G(\tilde{\CF'}, \tilde{\CF})\rightarrow \textrm{Hom}_G(\tilde{\CF'}, \tilde{\CF}|_{\bD_\infty}) \rightarrow \textrm{Ext}^1_G(\tilde{\CF'}, \tilde{\CF}(-\bD_\infty))\rightarrow \cdots
\end{equation}
Since our parabolic sheaves are framed at $\bD_\infty$, then there is a fixed isomorphism $\tilde{\CF}|_{\bD_\infty}\cong \tilde{\CF'}|_{\bD_\infty}$. Therefore, the morphism which restricts $\tilde{\CF'}$ to $\tilde{\CF'}|_{\bD_\infty}\cong \tilde{\CF}|_{\bD_\infty}$ is an element of the middle $\textrm{Hom}$ space in the above exact sequence. Push this element forward to $\textrm{Ext}^1_G(\tilde{\CF'}, \tilde{\CF}(-\bD_\infty))$, and by definition this will be the value of our section $s(\CF_\bullet,\CF'_\bullet)$.

\begin{prop} \label{zero locus}
The zero locus of $s$ is precisely the diagonal $\Delta\subset \CP_{\ul{d}}\times \CP_{\ul{d}}$.
\end{prop}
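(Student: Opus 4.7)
\proof[Proof proposal]
The plan is to interpret the vanishing of $s(\CF_\bullet,\CF'_\bullet)$ as the existence of a $G$-equivariant lift $\phi\colon \tilde{\CF'}\to\tilde\CF$ of the canonical framing identification at $\bD_\infty$, and then to show that any such $\phi$ is forced to be an isomorphism which identifies the two parabolic sheaves.

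First, unwinding the definition: from the long exact sequence~\eqref{les for s}, the connecting homomorphism sends $\psi\in\Hom_G(\tilde{\CF'},\tilde\CF|_{\bD_\infty})$ to $0\in\Ext^1_G(\tilde{\CF'},\tilde\CF(-\bD_\infty))$ if and only if $\psi$ lifts to an element of $\Hom_G(\tilde{\CF'},\tilde\CF)$. The distinguished $\psi$ whose image is $s(\CF_\bullet,\CF'_\bullet)$ is the composition of the restriction $\tilde{\CF'}\to\tilde{\CF'}|_{\bD_\infty}$ with the framing isomorphism $\tilde{\CF'}|_{\bD_\infty}\cong\tilde\CF|_{\bD_\infty}$. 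Hence $s(\CF_\bullet,\CF'_\bullet)=0$ iff there exists a $G$-equivariant $\phi\colon\tilde{\CF'}\to\tilde\CF$ whose restriction to $\bD_\infty$ coincides (under the framings) with the identity. This immediately gives the easy direction $\Delta\subset s^{-1}(0)$, by taking $\phi=\mathrm{id}$ when $\CF_\bullet=\CF'_\bullet$.

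For the converse, suppose such a $\phi$ exists. I would argue in three steps:
(a) \emph{Injectivity.} The subsheaf $\ker\phi\subset\tilde{\CF'}$ is $G$-invariant and torsion-free, and vanishes in a neighbourhood of $\bD_\infty$ since $\phi$ is the identity there; torsion-freeness then forces $\ker\phi=0$.
(b) \emph{Surjectivity.} The cokernel $T:=\tilde\CF/\phi(\tilde{\CF'})$ is a coherent sheaf on $\bS$ supported away from $\bD_\infty$, with $\on{ch}(T)=\on{ch}(\tilde\CF)-\on{ch}(\tilde{\CF'})=0$, because $\tilde\CF$ and $\tilde{\CF'}$ have the same rank, same first Chern class (both framed to $W\otimes\CO_{\bD_\infty}$), and the same second Chern character (both sheaves have $G$-degree $\ul{d}$, cf.~\eqref{explicit tilde fixed points}). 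From $\on{ch}_0(T)=\on{ch}_1(T)=0$ the sheaf $T$ is zero-dimensional, and then $\on{ch}_2(T)=\chi(T)=\on{length}(T)=0$ forces $T=0$.
(c) \emph{Compatibility.} Both $\tilde\CF$ and $\tilde{\CF'}$ embed, via their framings, into the constant sheaf $W\otimes K_\bS$ of rational sections; the $G$-equivariant isomorphism $\phi$ restricts to the identity on a neighbourhood of $\bD_\infty$, hence coincides with the identity of $W\otimes K_\bS$ on that open set, and therefore on all of $\bS$. Consequently $\tilde\CF=\tilde{\CF'}$ as $G$-equivariant subsheaves of $W\otimes K_\bS$, and the parabolic sheaf is recovered from this $G$-equivariant sheaf by the construction of subsection~\ref{realization}, giving $\CF_\bullet=\CF'_\bullet$.

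The main obstacle is step~(b): establishing that the cokernel vanishes requires the equality of the second Chern characters of $\tilde\CF$ and $\tilde{\CF'}$, which is where the hypothesis that both sheaves have the same degree $\ul{d}$ plays its essential role; the argument in~(a) and~(c) is formal once (b) is in hand.\qed
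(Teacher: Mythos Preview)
Your proof is correct and follows the same route as the paper's: both read off from the long exact sequence that $s(\CF_\bullet,\CF'_\bullet)=0$ is equivalent to the existence of a $G$-equivariant lift $\phi\colon\tilde{\CF'}\to\tilde\CF$ of the framing identification, then argue that $\phi$ must be an inclusion (your step~(a), the paper's ``$\phi$ can only be the natural inclusion'') and finally invoke equality of Chern classes to force $\CF'_\bullet=\CF_\bullet$ (your step~(b), the paper's last sentence). Your write-up is more explicit than the paper's, particularly in spelling out the Chern-character argument in~(b) and the rigidity statement in~(c); one small wording point is that in~(a) and~(c) what you actually know is that $\phi|_{\bD_\infty}$ is the identity, and you deduce that $\phi$ is an \emph{isomorphism} (not literally the identity) on a neighbourhood by semicontinuity --- but this is exactly what is needed, since a framed isomorphism already identifies the two moduli points.
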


\begin{proof} The fact that $s(\CF_\bullet, \CF_\bullet)=0$ easily follows from the long exact sequence \eqref{les for s}. Indeed, in this case the element of the middle $\textrm{Hom}$ space which we push forward to obtain $s(\CF_\bullet, \CF_\bullet)$ simply comes from the identity element in the left $\textrm{Hom}$ space. Since the composition of two successive maps in a long exact sequence is zero, this implies that $s(\CF_\bullet, \CF_\bullet)=0$.

Conversely, suppose $s(\CF_\bullet,\CF'_\bullet)=0$. By the exactness of \eqref{les for s}, it follows that the natural restriction morphism in the middle $\textrm{Hom}$ space comes from the left $\textrm{Hom}$ space. This implies that there exists a morphism $\phi\in \textrm{Hom}_G(\tilde{\CF'}, \tilde{\CF})$ which restricts to the identity on $\bD_\infty$. Since the sheaves $\tilde{\CF'}$ and $\tilde{\CF}$ have the same framing at $\bD_\infty$, such a $\phi$ can exist only if $\tilde{\CF'} \subset\tilde{\CF} \Leftrightarrow \CF'_\bullet \subset \CF_\bullet$ (in which case $\phi$ can only be the natural inclusion). Since $\CF_\bullet$ and $\CF'_\bullet$ have the same first and second Chern classes, this implies that $\CF'_\bullet=\CF_\bullet$.
\end{proof}

The zero-locus $[\Delta]$ is irreducible (since it is isomorphic to $\CP_{\ul{d}}$), and by Proposition~\ref{E bundle} its codimension in $\CP_{\ul{d}}\times \CP_{\ul{d}}$ precisely equals the rank of the bundle $E$. Therefore we have
\begin{equation} \label{diagonal euler class}
e(E)=k\cdot [\Delta],
\end{equation}
where $k\in \BN$ is the order of vanishing of the section $s$ at its zero-locus.

\begin{rem} {\em We conjecture that the section $s$ is transversal, 
and therefore $k=1$. However, we will not need this technical result.}
\end{rem}

\subsection{The class of $E$} 
\label{torus computations}
In this section, we will compute the class of $E$ in the
$\widetilde{T}\times \BC^*\times \BC^*$-equivariant $K$-theory of
$\CP_{\ul{d}}\times\CP_{\ul{d}}$. In the following, we will abuse notation
and denote by $\chi(\CF'_{k-1-n}, \CF_{k-n}(-\bD_\infty))$ and
$\chi(\CF'_{k-n}, \CF_{k-n}(-\bD_\infty))$ the sheaves over
$\CP_{\ul{d}}\times \CP_{\ul{d}}$ whose fibers above
$(\CF_\bullet,\CF'_\bullet)$ are the $\chi$ spaces in question.
By the argument of Proposition~\ref{E bundle}, these sheaves are vector
bundles. Recall the character $q'$ of
$\widetilde{T}\times \BC^*\times \BC^*$.

\begin{prop} \label{K-theory class}
We have the following equality of $K$-theory classes:
\begin{equation} \label{E K-theory}
[E]=\frac {\dsp \sum_{k=1}^n [\chi(\CF'_{k-n}, \CF_{k-n}(-\bD_\infty))] - \dsp \sum_{k=1}^n [\chi(\CF'_{k-1-n}, \CF_{k-n}(-\bD_\infty))]}{q'-1}.
\end{equation}
\end{prop}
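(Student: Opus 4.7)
By Proposition~\ref{E bundle} and the vanishing established in its proof, in equivariant $K$-theory we have $[E] = -[\chi_G(\tilde\CF', \tilde\CF(-\bD_\infty))]$, where $\chi_G$ denotes the $G$-invariant part of the alternating $\mathrm{Ext}$-sum computed on the cover $\bS^{\mathrm{up}} := \bC\times\bX^{\mathrm{up}}$. The strategy is to filter $\tilde\CF$ and $\tilde\CF'$ by the natural flags arising from their construction, use the projection formula to push each contribution down to the surface $\bS$, and then identify the $G$-invariant portions explicitly.

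From the defining expression $\tilde\CF = \sum_{j=0}^{n-1}\sigma^*\CF_{-n+1+j}(-j\bD_0^{\mathrm{up}})$, the partial sums $\tilde\CF^{(k)} := \sum_{j=0}^{k-1}\sigma^*\CF_{-n+1+j}(-j\bD_0^{\mathrm{up}})$ form an increasing filtration whose successive quotients are $\sigma^*(\CF_{k-n}/\CF_{k-n-1})(-(k-1)\bD_0^{\mathrm{up}})$. Thus in $K$-theory,
$$[\tilde\CF] = \sum_{k=1}^n\Bigl([\sigma^*\CF_{k-n}(-(k-1)\bD_0^{\mathrm{up}})] - [\sigma^*\CF_{k-n-1}(-(k-1)\bD_0^{\mathrm{up}})]\Bigr),$$
and analogously for $\tilde\CF'$. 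Substituting these into $[\chi_G(\tilde\CF', \tilde\CF(-\bD_\infty))]$ via biadditivity produces a double sum indexed by $(k,l) \in \{1,\ldots,n\}^2$, each summand of which has the form $\chi_G(\sigma^*A, \sigma^*B \otimes \CO((l-k)\bD_0^{\mathrm{up}} - \bD_\infty^{\mathrm{up}}))$ for appropriate downstairs sheaves $A,B$.

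The projection formula $R\sigma_*\,R\mathrm{Hom}(\sigma^*A, \sigma^*B \otimes L) = R\mathrm{Hom}(A, B \otimes \sigma_*L)$, combined with the passage to $G$-invariants, converts each such term into a downstairs Euler characteristic $\chi_{\bS}(A, B \otimes (\sigma_*L)^G)$. Using the equivariant decomposition of $\sigma_*\CO_{\bS^{\mathrm{up}}}$, one computes that for $|a|<n$ the $G$-invariant line bundle $(\sigma_*\CO(a\bD_0^{\mathrm{up}} - \bD_\infty^{\mathrm{up}}))^G$ equals $\CO_\bS(-\bD_\infty)$ when $a \ge 0$ and $\CO_\bS(-\bD_0 - \bD_\infty)$ when $a < 0$, the latter rewritten via the parabolic identity $\CF_j(-\bD_0) = \CF_{j-n}$ in order to match the indices appearing in the final formula.

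The main obstacle is then assembling the resulting double sum into the claimed closed form with denominator $q'-1$. The factor $(q'-1)$ arises from the equivariant structure of $\sigma_*\CO_{\bS^{\mathrm{up}}}$: the non-$G$-invariant isotypic components carry fractional characters $(q')^{-j/n}$ for $j=1,\ldots,n-1$, and the identity $\sum_{j=0}^{n-1}(q')^{j/n} = ((q')-1)/((q')^{1/n}-1)$ controls how the off-diagonal contributions $(l \neq k)$ assemble into the $G$-invariant answer. After carefully pairing the $l>k$ and $l<k$ contributions and invoking the short exact sequences $0\to\CF_{k-n-1}\to\CF_{k-n}\to\CF_{k-n}/\CF_{k-n-1}\to 0$ (and their analogues for $\CF'$), the cross terms must collapse so that only the difference $\sum_k\chi(\CF'_{k-n}, \CF_{k-n}(-\bD_\infty)) - \sum_k\chi(\CF'_{k-n-1}, \CF_{k-n}(-\bD_\infty))$ survives in the numerator, divided by exactly the single factor $q'-1$. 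Verifying this combinatorial collapse, and ruling out more complicated residual denominators, is the technical heart of the argument.
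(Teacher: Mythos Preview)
Your approach is genuinely different from the paper's, and while the strategy is reasonable in outline, it has a real gap: you never actually carry out the ``combinatorial collapse'' that you yourself identify as the technical heart. Writing that the cross terms ``must collapse'' is not a proof; you would need to exhibit the cancellation explicitly, and in particular explain precisely how a single factor of $q'-1$ (and nothing worse) emerges. Your suggested mechanism involving $\sum_{j=0}^{n-1}(q')^{j/n}$ is suspect: fractional powers of $q'$ do not live in the equivariant $K$-ring of the base $\bS$, so if they appear in an intermediate step you have conflated upstairs and downstairs characters. The honest source of the factor $q'-1$ is the equivariant identification $\CF_{k-n}(-\bD_0)\simeq\CF_{k-2n}$ together with the $\BC^*$-periodicity, not an isotypic sum on the cover; but making this precise and tracking it through your double sum is exactly the work you have left undone.

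The paper sidesteps all of this by invoking the Thomason localization theorem: since restriction to the (isolated) fixed-point set is an isomorphism on localized equivariant $K$-theory, it suffices to check~\eqref{E K-theory} fibre by fibre at each $(\widetilde{\ul{d}},\widetilde{\ul{d}}')$, where it becomes an equality of explicit torus characters. Both sides are then computed via Lemma~\ref{torus char 0} (the character of $\chi(J_{\lambda'},J_\lambda(-\bD_\infty+\alpha\bD_0+\beta\bD_1))$ for rank-one ideal sheaves) applied to the direct-sum decomposition~\eqref{swim}, and the resulting rational functions in $q,q',t_l$ are compared directly. This is more computational but entirely elementary, and it delivers Proposition~\ref{torus char E} as a byproduct --- a formula the paper needs independently for the matrix-coefficient Theorem~\ref{affeigin}. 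Your global filtration argument, if it could be completed, would be conceptually cleaner but would not yield that explicit tangent-character formula.
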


\begin{rem} {\em The equivariant $K$-theory of $\CP_{\ul{d}}\times
\CP_{\ul{d}}$ is an algebra over the ring of scalars
$K_0:=K_{\widetilde{T}\times \BC^*\times \BC^*}(pt)$. Relation
\eqref{E K-theory} only makes sense in the localized equivariant $K$-theory
algebra, i.e. localized with respect to $q-1\in K_0$.}
\end{rem}

\begin{proof} By the Thomason localization theorem, restriction to the fixed point locus produces the following isomorphism of vector spaces
$$
K_{\widetilde{T}\times \BC^*\times \BC^*}(\CP_{\ul{d}}\times \CP_{\ul{d}}) \bigotimes_{K_0} \textrm{Frac}(K_0) \iso$$
$$K_{\widetilde{T}\times \BC^*\times \BC^*}(\CP_{\ul{d}}^{\widetilde{T}\times \BC^*\times \BC^*}\times \CP_{\ul{d}}^{\widetilde{T}\times \BC^*\times \BC^*}) \bigotimes_{K_0} \textrm{Frac}(K_0).
$$
Therefore, to prove the equality \eqref{E K-theory} it is enough to show that equality holds when restricting to every torus fixed point of $\CP_{\ul{d}}\times \CP_{\ul{d}}$. But the fixed points are isolated, and thus at each fixed point will we just have to check an isomorphism of $\widetilde{T}\times \BC^*\times \BC^*-$representations. Two representations of a torus are isomorphic if and only if their characters are equal. Thus, we only need to compute the characters of $\widetilde{T}\times \BC^*\times \BC^*$ in the fixed fibers of the left hand side and right hand side of \eqref{E K-theory}, and show that they are equal. \\

Recall from sections~\ref{fp} and \ref{realization} that a fixed point $(\CF_\bullet,\CF'_\bullet)\in \CP_{\ul{d}}\times \CP_{\ul{d}}$ is given by two collections of partitions $(\blambda,\blambda')$, or alternatively by two vectors of positive integers $(\widetilde{\ul{d}},\widetilde{\ul{d}}')$. The relation between $\blambda$ and $\widetilde{\ul{d}}$ is given by relations \eqref{relation lambdas} and \eqref{relation d lambda}: if $1\leq k,l\leq n$, then
$$
\lambda^{kl}_s=d_{ns-n\lfloor \frac {k-l}n \rfloor +k,l}
$$
The computation of the characters we need is based on the following lemma:

\begin{lem} \label{torus char 0}
Let $\bD_0=\bC \times 0_{\bX}$ and $\bD_1=0_{\bC}\times \bX$. Let us consider two partitions $\lambda,\lambda'$ and the associated rank 1 torsion free sheaves $J_\lambda, J_{\lambda'}$ on $\bS$. Then the torus character of $\BC^*\times \BC^*$ in $\chi(J_{\lambda'}, J_\lambda(-\bD_\infty+\alpha \bD_0+\beta \bD_1))$ equals
$$
\emph{char}_{\alpha\beta}(\lambda',\lambda):=-\sum_{i=0}^\infty\sum_{i'=0}^\infty q^{\beta+1}\frac {({q}^{\lambda'_{i'}}-1)({q}^{-\lambda_{i}}-1)}{q-1}\cdot {q'}^{\alpha+i'-i}(q'-1)+
$$
$$
+\sum_{i=0}^\infty q^{\beta+1}\frac {{q}^{-\lambda_{i}}-1}{q-1}\cdot {q'}^{\alpha-i}-\sum_{i'=0}^\infty q^{\beta+1}\frac {{q}^{\lambda'_{i'}}-1}{q-1}\cdot {q'}^{\alpha+i'+1}+ qq'\frac {{q}^\beta-1}{q-1}\cdot \frac {{q'}^\alpha-1}{q'-1}
$$
\end{lem}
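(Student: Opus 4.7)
The plan is to expand $\chi(J_{\lambda'}, J_\lambda(-\bD_\infty + \alpha\bD_0 + \beta\bD_1))$ into a four-term alternating sum via biadditivity of $\chi$ in short exact sequences. Applying the sequences $0 \to J_\mu \to \CO_\bS \to \CO_\mu \to 0$ for $\mu=\lambda,\lambda'$ (where $\CO_\mu := \CO_\bS/J_\mu$ is zero-dimensional and set-theoretically supported at $(0_\bC,0_\bX)$), and writing $L := \CO_\bS(-\bD_\infty + \alpha\bD_0 + \beta\bD_1)$, one obtains
\begin{equation*}
\chi(J_{\lambda'}, J_\lambda \otimes L) = \chi(\CO_\bS, L) - \chi(\CO_\bS, \CO_\lambda \otimes L) - \chi(\CO_{\lambda'}, L) + \chi(\CO_{\lambda'}, \CO_\lambda \otimes L).
\end{equation*}
I will match these four summands, in order, with the four summands of the claimed formula read from last to first.

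The first summand is the equivariant Euler characteristic of a line bundle on $\bS = \BP^1 \times \BP^1$; using the K\"unneth decomposition $L \cong \CO_\bC(\beta\cdot 0_\bC - \infty_\bC) \boxtimes \CO_\bX(\alpha \cdot 0_\bX - \infty_\bX)$ it factors as a product of two $\BP^1$-computations. On the $\bC$ factor the sections have equivariant basis $\{z^{-k} : 1 \le k \le \beta\}$ with $z^{-k}$ of weight $q^k$, giving $\sum_{k=1}^{\beta} q^k = q\tfrac{q^\beta-1}{q-1}$; the product with the analogous $\bX$-factor is the last term of the formula. The second summand $\chi(\CO_\bS, \CO_\lambda \otimes L) = \chi(\bS, \CO_\lambda \otimes L)$ is the sum of weights of a zero-dimensional sheaf: near $(0_\bC,0_\bX)$ the line bundle $L$ is equivariantly trivialized with generator of weight $q^\beta(q')^\alpha$, $\CO_\lambda$ has character $\sum_{i \ge 0}\sum_{0 \le a < \lambda_i} q^{-a}(q')^{-i}$, and summing the inner geometric progression in $a$ reproduces the second term of the formula after accounting for the overall minus sign in the expansion. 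The third summand $\chi(\CO_{\lambda'}, L)$ is handled by equivariant Serre duality on $\bS$: only $\Ext^2$ contributes (since $\CO_{\lambda'}$ is torsion and $L$ is locally free), and $\chi(\CO_{\lambda'}, L)$ becomes the $q\leftrightarrow q^{-1},\,q'\leftrightarrow(q')^{-1}$ dual of $\chi(\bS, \CO_{\lambda'} \otimes L^{-1} \otimes \omega_\bS)$; using that $\omega_\bS$ has local weight $q^{-1}(q')^{-1}$ at the origin, the analogous geometric summation yields $\sum_{i'} q^{\beta+1}\tfrac{q^{\lambda'_{i'}}-1}{q-1}(q')^{\alpha+i'+1}$, matching the third term of the formula up to the sign coming from the expansion.

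The last and most delicate summand is $\chi(\CO_{\lambda'}, \CO_\lambda \otimes L)$, which produces the leading double sum. Both factors are zero-dimensional and supported at the origin, so the computation is purely local on the affine chart $\operatorname{Spec}\BC[z,y]$. I would carry it out by resolving $\CO_{\lambda'}$ by a Koszul-type locally free complex --- equivalently, applying the additivity identity $[\CO_{\lambda'}] = [\CO_\bS] - [J_{\lambda'}]$ once more and combining it with the Koszul resolution of the maximal ideal at the origin --- and evaluating the resulting $R\Hom$ against $\CO_\lambda \otimes L$ in equivariant K-theory via Grothendieck local duality. The twist by $L$ contributes the overall factor $q^\beta(q')^\alpha$, the Matlis dualization contributes $qq'$ together with the $q\leftrightarrow q^{-1},\,q'\leftrightarrow(q')^{-1}$ swap on the $\lambda'$-characters, and the Koszul contributions produce the factor $(q'-1)$ and the denominator $(q-1)^{-1}$, so that after rearrangement the character sum factors as a product of two generating series of the form $\sum_{i'}(q')^{i'}\tfrac{q^{\lambda'_{i'}}-1}{q-1}$ and $\sum_i (q')^{-i}\tfrac{q^{-\lambda_i}-1}{q-1}$ multiplied by the overall weight. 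The main obstacle is purely bookkeeping: tracking the equivariant shifts, Koszul signs, and the dualization in Serre duality so that the four contributions sum to the stated closed form; assembling them then completes the proof.
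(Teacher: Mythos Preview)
Your argument is correct, but it proceeds along a different decomposition than the paper's. The paper does not pass to the quotients $\CO_\mu=\CO_\bS/J_\mu$; instead it resolves each $J_\mu$ by line bundles,
\[
0\ \longrightarrow\ \bigoplus_{i=0}^{t-1}\CO_\bS(-(i+1)\bD_0-\mu_i\bD_1)\ \longrightarrow\ \bigoplus_{i=0}^{t}\CO_\bS(-i\bD_0-\mu_i\bD_1)\ \longrightarrow\ J_\mu\ \longrightarrow\ 0,
\]
reduces $\chi(J_{\lambda'},J_\lambda(-\bD_\infty+\alpha\bD_0+\beta\bD_1))$ to a double alternating sum of the single line-bundle characters $\mathrm{char}_{a,b}=\chi(\CO_\bS,\CO_\bS(-\bD_\infty+a\bD_0+b\bD_1))=qq'\,\frac{q^b-1}{q-1}\,\frac{(q')^a-1}{q'-1}$, and then collapses the telescoping sums.

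Your route has the pleasant feature that the four summands in the stated formula acquire a transparent meaning: they are exactly $\chi(\CO_\bS,L)$, $-\chi(\CO_\bS,\CO_\lambda\otimes L)$, $-\chi(\CO_{\lambda'},L)$, and $\chi(\CO_{\lambda'},\CO_\lambda\otimes L)$. The price is that three different tools (direct weight count, Serre duality, local/Koszul duality) are invoked for what the paper handles with one uniform computation. If you want to tighten the last step, note that you can bypass Grothendieck local duality entirely: writing $[\CO_\mu]=[\CO_\bS]-[J_\mu]$ and feeding the same line-bundle resolution of $J_\mu$ back in gives $\chi(\CO_{\lambda'},\CO_\lambda\otimes L)$ as an explicit finite Laurent polynomial, from which the double-sum factorisation drops out by the same telescoping as in the paper.
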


\begin{proof} Let us begin by noting that the character of $\BC^*\times \BC^*$ in  $\chi(\CO_{\bS}, \CO_{\bS}(-\bD_\infty+a\bD_0+b \bD_1))$ equals
\begin{equation} \label{char line bundles}
\textrm{char}_{a,b}:=qq'\cdot \frac {{q}^b-1}{q-1}\cdot \frac {{q'}^a-1}{q'-1}
\end{equation}

Recall that $J_\lambda=y^0z^{\lambda_0}+...+y^tz^{\lambda_t}$, where $t$ is large enough such that $\lambda_t=0$. Then a resolution of $J_\lambda$ is naturally given by
$$
0\rightarrow \bigoplus_{i=0}^{t-1} \CO_{\bS}(-(i+1)\bD_0-\lambda_i\bD_1) \rightarrow \bigoplus_{i=0}^t \CO_{\bS}(-i\bD_0-\lambda_i\bD_1) \rightarrow J_{\lambda}\rightarrow 0
$$
A similar resolution holds for $J_{\lambda'}$. Since both the functor $\chi$ and the character of a representation are additive in exact sequences, we have
$$
\textrm{char}_{\alpha,\beta}(\lambda',\lambda)=\sum_{i=0}^t \sum_{i'=0}^{t'} \textrm{char}_{i'-i+\alpha,\lambda'_{i'}-\lambda_i+\beta} - \sum_{i=0}^t \sum_{i'=0}^{t'-1} \textrm{char}_{i'-i+1+\alpha,\lambda'_{i'}-\lambda_i+\beta} -
$$
$$
-\sum_{i=0}^{t-1} \sum_{i'=0}^{t'} \textrm{char}_{i'-i-1+\alpha,\lambda'_{i'}-\lambda_i+\beta} + \sum_{i=0}^{t-1} \sum_{i'=0}^{t'-1} \textrm{char}_{i'-i+\alpha,\lambda'_{i'}-\lambda_i+\beta}
$$
The desired formula for $\textrm{char}_{\alpha\beta}(\lambda',\lambda)$ in the statement of the lemma is obtained by plugging \eqref{char line bundles} into the above expression, and performing some predictable computational manipulations.
\end{proof}

Recall that our fixed parabolic sheaves $\tilde{\CF'}$ and $\tilde{\CF}$ have the form \eqref{explicit tilde fixed points}. Thus one can compute the torus character in $\chi(\tilde{\CF'}, \tilde{\CF}(-\bD_\infty))$ by applying the above Lemma. Then, to obtain the torus character in the $G-$invariant part of this representation, we must keep only the terms of the form ${q'}^{nx}\cdot \dots$, and replace them by ${q'}^x \cdot \dots$. In this way, we obtain the following result.
Let $t_i$ stand for the following character of
$\widetilde{T}\times \BC^*\times \BC^*:\ (t_1,\ldots,t_i,\ldots,t_n,v,c)\mapsto
t_i$.

\begin{prop} \label{torus char E}
The character of $\widetilde{T}\times \BC^*\times \BC^*$ in $E|_{(\CF_\bullet, \CF'_\bullet)}=-\chi_G(\tilde{\CF'}, \tilde{\CF}(-\bD_\infty))$ equals
$$
\sum_{k=1}^n \sum_{l\leq k}^{l'\leq k-1} \frac{t^2_l}{t^2_{l'}}\cdot q\frac {({q}^{d'_{(k-1)l'}}-1)({q}^{-d_{kl}}-1)}{q-1}\cdot {q'}^{\lfloor \frac {-l'}n\rfloor-\lfloor \frac {-l}n\rfloor}+$$
$$+\sum_{k=1}^n \sum_{l'\leq k-1} \frac{t^2_k}{t^2_{l'}}\cdot q\frac {{q}^{d'_{(k-1)l'}}-1}{q-1}\cdot {q'}^{\lfloor \frac {-l'}n\rfloor-\lfloor \frac {-k}n\rfloor}-
$$
$$
-\sum_{k=1}^n \sum_{l\leq k}^{l'\leq k}\frac{t^2_l}{t^2_{l'}}\cdot q\frac {({q}^{d'_{kl'}}-1)({q}^{-d_{kl}}-1)}{q-1}\cdot {q'}^{\lfloor \frac {-l'}n\rfloor-\lfloor \frac {-l}n\rfloor}-$$
$$-\sum_{k=1}^n \sum_{l\leq k} \frac{t^2_l}{t^2_k}\cdot q\frac {{q}^{-d_{kl}}-1}{q-1}\cdot {q'}^{\lfloor \frac {-k}n\rfloor-\lfloor \frac {-l}n\rfloor}
$$
where $t_l:=t_{l \emph{ mod }n}$.
\end{prop}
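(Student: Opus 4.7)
\medskip

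\emph{Proposal for the proof of Proposition~\ref{torus char E}.}
The plan is to reduce the computation to a direct application of Lemma~\ref{torus char 0}, carry out the $G$-averaging term by term, and then re-index the resulting sums so that the outer summation variable ranges only over residues modulo $n$.

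\emph{Setup.} At the fixed point indexed by $(\widetilde{\ul d},\widetilde{\ul d}{}')$ the sheaves $\tilde\CF$ and $\tilde{\CF'}$ split, by~(\ref{explicit tilde fixed points}), as direct sums of rank one sheaves $J_{\lambda^l}(-(l-1)\bD_0)w_l$ and $J_{\lambda'^{l'}}(-(l'-1)\bD_0)w_{l'}$ with $l,l'\in\{1,\dots,n\}$. Since $\chi$ is additive in direct sums, the upstairs character equals
\[
\chi(\tilde{\CF'},\tilde\CF(-\bD_\infty))=\sum_{l,l'=1}^n \tfrac{t_l^2}{t_{l'}^2}\,\chi\bigl(J_{\lambda'^{l'}},J_{\lambda^l}(-\bD_\infty+(l'-l)\bD_0)\bigr),
\]
with the shift $(l'-l)\bD_0$ obtained by moving the two $(-(\cdot-1)\bD_0)$ twists through $\chi$. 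Each summand is computed by Lemma~\ref{torus char 0} with $\alpha=l'-l$ and $\beta=0$, producing four pieces $A,B,C,D$ in the variables $i,i'\ge 0$.

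\emph{Reindexing and $G$-averaging.} Using~(\ref{relation d lambda}) one replaces $\lambda^l_i$ by $d_{i+l,\,l}$ and $\lambda'^{l'}_{i'}$ by $d'_{i'+l',\,l'}$, and sets $k=i+l$, $k'=i'+l'$; the $q'$-exponents become $k'-k$ (or $k'-k+1$, after expanding $(q'-1)$ in term $A$). Taking $G$-invariants amounts to retaining only those monomials whose $q'$-exponent is divisible by $n$ and then dividing the exponent by $n$. Thus term $A$ splits into two pieces according to whether $k'\equiv k$ or $k'\equiv k-1\pmod n$; these will become the third and first terms in the statement of the proposition. The singly-summed pieces $B,C$ contribute the fourth and second terms, while the constant piece $D$ contributes to the third via the $(q'^\alpha-1)/(q'-1)$ expansion.

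\emph{Passage to the standard form.} To turn the resulting sums (indexed by arbitrarily large $k$, $k'$ with $l,l'\in\{1,\dots,n\}$) into the shape claimed in the proposition, write each $k=k_0+na$ with $k_0\in\{1,\dots,n\}$, $a\ge 0$, and simultaneously relabel $l\mapsto l-na$ (allowing $l$ to range over all integers $\le k_0$). The periodicities $d_{i+n,j+n}=d_{ij}$ and $t_{i+n}=t_i$ make this substitution leave all $d$- and $t$-factors invariant; since $\lfloor-l/n\rfloor=-1$ for the original $l\in\{1,\dots,n\}$, the shift $a$ is recorded as $\lfloor-l_{\text{new}}/n\rfloor+1$, and the combination $a'-a$ that measures the divided $q'$-exponent becomes exactly $\lfloor-l'/n\rfloor-\lfloor-l/n\rfloor$. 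Performing the analogous substitution in the $k'$-variable for term $A$, and finally negating to pass from $\chi_G$ to $E=-\chi_G$, produces the four summands displayed in the proposition with their stated signs.

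\emph{Main obstacle.} The only delicate point is the bookkeeping in the last step: one must simultaneously track the shift of $k$ modulo $n$, the absorption of the shift into $l$, and the way in which the $G$-invariance condition on term $A$ splits it into the two residue classes $k'\equiv k$ and $k'\equiv k-1\pmod n$. Once this shift-and-split is done consistently and the $\lfloor-l/n\rfloor$ factor is identified with the count of $n$-shifts, the four sums assemble into precisely the formula of the proposition.
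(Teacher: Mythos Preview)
Your approach is exactly the one the paper takes: decompose $\tilde\CF,\tilde\CF'$ via~(\ref{explicit tilde fixed points}), apply Lemma~\ref{torus char 0} summand by summand with $\alpha=l'-l$ and $\beta=0$, extract the $G$-invariant part by keeping only $q'$-exponents divisible by $n$ and dividing them by $n$, and then re-index so that the outer sum runs over residues $k\in\{1,\dots,n\}$. The paper's own text compresses all of this into a single sentence; your write-up simply makes the bookkeeping explicit, and your identification of $a'-a$ with $\lfloor -l'/n\rfloor-\lfloor -l/n\rfloor$ is the key step that the paper leaves to the reader.

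One small correction: the constant piece $D$ of Lemma~\ref{torus char 0} does \emph{not} contribute at all, since $\beta=0$ forces $(q^\beta-1)/(q-1)=0$. The first and third displayed sums in the proposition both arise from term $A$ alone, split according to whether $k'\equiv k-1$ or $k'\equiv k\pmod n$ after expanding the factor $(q'-1)$; terms $B$ and $C$ give the fourth and second sums, and $D$ is identically zero. This slip is harmless for the argument, but you should remove the sentence attributing part of the third sum to $D$.
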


\begin{rem} \label{rank of E}
{\em By letting $q,q',t_l\rightarrow 1$ in the above expression,
one obtains that the dimension of the fiber of $E$ equals $\sum_{i\geq j}
d_{ij}+\sum_{i\geq j} d'_{ij}=2(d_1+\dots+ d_n)=\dim(\CP_{\ul{d}})$.}
\end{rem}

\begin{rem} \label{tangent character}
{\em As was noted above, the restriction of $E$ to the diagonal is isomorphic
to the tangent bundle of $\CP_{\ul{d}}$. Therefore, the torus character in a
fixed tangent space to $\CP_{\ul{d}}$ is obtained by setting
$\widetilde{\ul{d}}=\widetilde{\ul{d}'}$ in the above expression.}
\end{rem}

In a similar way, one computes the character in the right hand side 
of~\eqref{E K-theory}.

\begin{prop} The character of $\widetilde{T}\times \BC^*\times \BC^*$ in $\chi(\CF'_{k-1-n}, \CF_{k-n}(-\bD_\infty))$ equals
$$
-\sum_{l\leq k}^{l'\leq k-1} \frac{t^2_l}{t^2_{l'}}\cdot q\frac {({q}^{d'_{(k-1)l'}}-1)({q}^{-d_{kl}}-1)}{q-1}\cdot {q'}^{\lfloor \frac {-l'}n\rfloor-\lfloor \frac {-l}n\rfloor}(q'-1)+
$$
$$
+\sum_{l'=1}^n\sum_{l\leq k} \frac{t^2_l}{t^2_{l'}}\cdot q\frac {{q}^{-d_{kl}}-1}{q-1}\cdot {q'}^{\lfloor \frac {l'-k}n\rfloor-\lfloor \frac {-l}n\rfloor}-$$ 
$$-\sum_{l=1}^n \sum_{l'\leq k-1} \frac{t^2_l}{t^2_{l'}}\cdot q\frac {{q}^{d'_{(k-1)l'}}-1}{q-1}\cdot {q'}^{\lfloor \frac {-l'}n\rfloor-\lfloor \frac {l-k-1}n\rfloor+1}
$$
Similarly, the character of $\widetilde{T}\times \BC^*\times \BC^*$ in $\chi(\CF'_{k-n}, \CF_{k-n}(-\bD_\infty))$ equals
$$
-\sum_{l\leq k}^{l'\leq k} \frac{t^2_l}{t^2_{l'}}\cdot q\frac {({q}^{d'_{kl'}}-1)({q}^{-d_{kl}}-1)}{q-1}\cdot {q'}^{\lfloor \frac {-l'}n\rfloor-\lfloor \frac {-l}n\rfloor}(q'-1)+
$$
$$
+\sum_{l'=1}^n\sum_{l\leq k} \frac{t^2_l}{t^2_{l'}}\cdot q\frac {{q}^{-d_{kl}}-1}{q-1}\cdot {q'}^{\lfloor \frac {l'-k-1}n\rfloor-\lfloor \frac {-l}n\rfloor}-$$ 
$$-\sum_{l=1}^n \sum_{l'\leq k} \frac{t^2_l}{t^2_{l'}}\cdot q\frac {{q}^{d'_{kl'}}-1}{q-1}\cdot {q'}^{\lfloor \frac {-l'}n\rfloor-\lfloor \frac {l-k-1}n\rfloor+1}
$$
\end{prop}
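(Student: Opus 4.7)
The proof is a direct torus-character computation in the same spirit as the proof of Proposition~\ref{torus char E}, using the explicit description \eqref{swim} of fixed parabolic sheaves together with Lemma~\ref{torus char 0}. The plan is the following. First I would expand the sheaves $\CF_{k-n}$ and $\CF'_{k-1-n}$ at a fixed point $(\blambda,\blambda')$ via \eqref{swim}: each becomes a direct sum of ideal sheaves $J_{\lambda^{kl}}$ (resp.\ $J_{\lambda'^{k-1,l'}}$) twisted by $w_l$ (resp.\ $w_{l'}$), and, for the range $l>k$ (resp.\ $l'>k-1$), additionally twisted by $-\bD_0$.

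Next I would apply Lemma~\ref{torus char 0} summand-by-summand. Each pair $(l,l')\in\{1,\ldots,n\}^2$ contributes a factor $t_l^2/t_{l'}^2$ (from the $w_l,w_{l'}$ weights) times $\mathrm{char}_{\alpha_{l,l'},\,0}(\lambda'^{k-1,l'},\lambda^{kl})$, where $\alpha_{l,l'}\in\{-1,0,+1\}$ records the $-\bD_0$ twists from \eqref{swim}: $\alpha_{l,l'}=0$ when $l\le k$ and $l'\le k-1$, $\alpha_{l,l'}=-1$ when $l>k$ and $l'\le k-1$, $\alpha_{l,l'}=+1$ when $l\le k$ and $l'\ge k$, and $\alpha_{l,l'}=0$ when $l>k$ and $l'\ge k$. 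The parameter $\beta$ is $0$ throughout since no $\bD_1$-twist appears. Summing these contributions over $(l,l')$ gives the desired character.

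The bulk of the work is in matching this sum to the stated formula. The double-sum term of Lemma~\ref{torus char 0}, after using \eqref{relation d lambda} to identify $d_{kl}=\lambda^{kl}_0$ and $d'_{(k-1)l'}=\lambda'^{k-1,l'}_0$, contributes the first sum. The two single-sum terms of the lemma produce the second and third sums: the summation index $i$ (resp.\ $i'$) of Lemma~\ref{torus char 0} gets repackaged into a summation over the extended range $l'\in\{1,\ldots,n\}$ (resp.\ $l\in\{1,\ldots,n\}$) via the correspondence $\lambda^{kl}_i=\lambda^l_{k-l+ni}$ from \eqref{relation lambdas} combined with $\lambda^l_s=d_{s+l,l}$ from \eqref{relation d lambda} and the quasiperiodicity $\CF_{k+n}=\CF_k(\bD_0)$. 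The floor-function exponents $\lfloor-l'/n\rfloor$, $\lfloor(l'-k)/n\rfloor$ and $\lfloor(l-k-1)/n\rfloor$ of $q'$ in the statement precisely record these periodic shifts combined with the $\alpha_{l,l'}$ offsets. The ``background'' term of Lemma~\ref{torus char 0} vanishes since $\beta=0$ forces $q^\beta-1=0$.

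The second formula, for $\chi(\CF'_{k-n},\CF_{k-n}(-\bD_\infty))$, is proved by exactly the same method; the only difference is that the cut in the decomposition of $\CF'_{k-n}$ occurs at $l'=k$ rather than $l'=k-1$, which shifts the floor-function exponents accordingly. The main obstacle is the combinatorial bookkeeping of this re-indexing: one must carefully track the four sign cases $(l\le k$ or $l>k$, and $l'\le k-1$ or $l'>k-1$) and verify that the resulting sums, after re-indexing and combining, reproduce the stated formulas exactly. No new input beyond Lemma~\ref{torus char 0} is required.
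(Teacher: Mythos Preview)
Your proposal is correct and follows exactly the approach the paper intends: the paper itself gives no proof beyond the phrase ``In a similar way, one computes the character in the right hand side of \eqref{E K-theory}'', and your plan --- decompose $\CF_{k-n}$ and $\CF'_{k-1-n}$ via \eqref{swim}, apply Lemma~\ref{torus char 0} summand-by-summand with the appropriate $\bD_0$-shifts $\alpha_{l,l'}$, and re-index using \eqref{relation lambdas}--\eqref{relation d lambda} --- is precisely this computation spelled out. One small wording slip: in your description of the single-sum terms, it is the \emph{combination} of the Lemma's sum over $i$ with the finite sum over $l\in\{1,\ldots,n\}$ that produces the extended sum over $l\le k$ (and likewise $i'$ with $l'$), not a repackaging of $i$ into a finite $l'$-range; but this does not affect the correctness of the argument.
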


The above two propositions allow us to compare the left and right hand sides of \eqref{E K-theory} at the level of characters in the fixed fibers, and note that they are equal. As was mentioned at the beginning of the proof, this is enough to ensure that equality \eqref{E K-theory} holds in equivariant $K$-theory.
\end{proof}

\subsection{Relation to K\"unneth components} 
\label{end0}
The vector bundles $\chi(\CF'_k, \CF_k(-\bD_\infty))$, $\chi(\CF'_{k-1}, \CF_k(-\bD_\infty))$ can be linked to the K\"unneth components of the Chern classes 
of the universal sheaves $\ul{\CF}_k$ through a universal procedure that we illustrate below. Let $\pi:\CP_{\ul{d}}\times \bS\rightarrow \CP_{\ul{d}}$ denote the standard projection. The Beilinson spectral sequence (Theorem 3.1.3 of ~\cite{oss}) for the universal sheaf $\ul{\CF}_k$ has $E_1$ term given by
$$
E_1^{pq}=R^q\pi_*\ul{\CF}_k(\dots) \boxtimes \Omega_\bS^{-p}(\dots),
$$
where $\dots$ denote twists by line bundles on $\bS$, which do not depend on the integer $q$. This spectral sequence converges, and the $E_\infty$ term has $E_\infty^{pq}=0$ for $p+q\neq 0$, while $E_\infty^{p,-p}$ gives a filtration of $\ul{\CF}_k$ itself. As a general fact about spectral sequences, the $K$-theory class of the alternating sum $(-1)^{p+q}E_r^{pq}$ is the same for all $r$. Equating these $K$-theory classes for $r=1$ and $r=\infty$ gives us the following identity in the $K$-theory ring of $\ul{\CF}_k \times \bS$:
\begin{equation} \label{beilinson}
[\ul{\CF}_k]=\sum_p (-1)^p \left(\sum_q (-1)^q [R^q\pi_*\ul{\CF}_k(\dots)]\right) \boxtimes [\Omega_\bS^{-p}(\dots)].
\end{equation}
Consider the following complex of sheaves $\CG$ on
$\CP_{\ul{d}} \times \CP_{\ul{d}} \times \bS$:
$$
\CG:=R\CHom_{\bS}(\ul{\CF}'_k,\ul{\CF}_k(-\bD_\infty))\cong
\ul{\CF}_k(-\bD_\infty)\stackrel{L}{\otimes}_{\CO_\bS}\ul{\CF}'_k{}^\vee,
$$
where $\ul{\CF}'_k{}^\vee$ stands for the dual sheaf $R\CHom(\ul{\CF}'_k,\CO)$.
Let $\tilde{\pi}:\CP_{\ul{d}} \times \CP_{\ul{d}} \times \bS \rightarrow \CP_{\ul{d}} \times \CP_{\ul{d}}$ denote the standard projection. Since the functor $\chi$ is the derived functor of $\textrm{Hom}$, we have the obvious identity in $K$-theory:
\begin{equation} \label{chi derived 1}
[\chi(\CF'_k, \CF_k(-\bD_\infty))]=\sum_{q} (-1)^q R^q \tilde{\pi}_*[\CG].
\end{equation}
However, the class of $\CG$ is clearly given by
$$
[\CG]=[\ul{\CF}_k(-\bD_\infty)] \otimes [\ul{\CF}_k^\vee],
$$
where we suppress the notation for the obvious pull-back morphisms. We can use \eqref{beilinson} to evaluate the above:
$$
[\CG]=\sum_{p_1} \sum_{p_2} \left(\sum_q (-1)^q [R^q\pi_*\ul{\CF}_k(\dots)]
\right)\boxtimes \left(\sum_q (-1)^q [R^q\pi_*\ul{\CF}_k(\dots)^\vee]\right)
\boxtimes$$ 
$$\left([\Omega_\bS^{-p_1}(\dots)] \otimes [\Omega_\bS^{-p_2}(\dots)^\vee] \right)
$$
Plugging this into \eqref{chi derived 1} gives us
\begin{multline} \label{chi derived 2}
[\chi(\CF'_k, \CF_k(-\bD_\infty))]=\sum_{p_1} \sum_{p_2} \left(\sum_q (-1)^q [R^q\pi_*\ul{\CF}_k(\dots)]\right)\boxtimes\\
 \left(\sum_q (-1)^q [R^q\pi_*\ul{\CF}_k(\dots)^\vee]\right)\cdot Z,
\end{multline}
where $Z=\chi(\Omega_\bS^{-p_2}(\dots),\Omega_\bS^{-p_1}(\dots))$ is a constant in the $K$-theory algebra. By the Grothendieck-Riemann-Roch theorem applied to the projection map $\pi:\CP_{\ul{d}}\times \bS\rightarrow \CP_{\ul{d}}$, we obtain
$$
\textrm{ch}\left(\sum_q (-1)^qR^q\pi_*\ul{\CF}_k(\dots)\right)=\pi_*\textrm{ch}(\ul{\CF}_k\cdot \dots),
$$
where $\dots$ again denotes a class on $\bS$. Each graded piece of the
right hand side is precisely a certain polynomial in the K\"unneth components
of the tautological bundle $\ul{\CF}_k$. Thus, applying the Chern character
to~\eqref{chi derived 2}, we see that the Chern classes of the bundle $\chi(\CF'_k, \CF_k(-\bD_\infty))$ are exterior products of polynomials in the K\"unneth components of the tautological bundle $\ul{\CF}_k$. This is exactly the form we want for the expression in the right hand side of \eqref{form of diagonal}. By a similar argument, the Chern classes of $\chi(\CF'_{k-1}, \CF_k(-\bD_\infty))$ will also be exterior products of polynomials in the K\"unneth components of the 
Chern classes of the tautological bundles. \\

Now let us look at relation \eqref{E K-theory}. Since $q'=e^{\hbar'}$, this relation allows us to inductively express the Chern classes of $E$ in terms of the Chern classes of the bundles $\chi(\CF'_k, \CF_k(-\bD_\infty)), \chi(\CF'_{k-1}, \CF_k(-\bD_\infty))$. More precisely, the Chern classes of $E$ will be polynomials in the Chern classes of the $\chi$ bundles (divided by polynomials in the constant $\hbar'$, but this is allowed since we tensor all our cohomology rings by $\BC(\hbar, \hbar')$). Therefore the Chern classes of $E$ all have the form in the right hand side of \eqref{form of diagonal}. By \eqref{diagonal euler class}, $[\Delta]$ will also have that form. This completes the proof of Theorem~\ref{feiryb}.

\subsection{Torus eigenvalues in the tangent spaces} 
In this section, we will use torus character computations as in section~\ref{torus computations} in order to prove Theorem~\ref{affeigin}. In other words, we want to compute the matrix coefficients of $\fe_i, \ff_i$ in the basis $[\widetilde{\ul{d}}]$. By equivariant localization, this comes down to computing the torus character in the tangent spaces to $\sE_{\ul{d},i}$ at the torus fixed points. If we let $d'_j=d_j+\delta_{j \textrm{ mod }n}^i$, recall that the correspondence $\sE_{\ul{d},i}\subset \CP_{\ul{d}} \times \CP_{\ul{d}'}$ consists of flags $(\CF_\bullet, \CF'_\bullet)$ such that $\CF'_j=\CF_j$ for $i \neq j \textrm{ mod }n$, while $\CF'_j \subset \CF_j$ for $i = j \textrm{ mod }n$. \\

Let $\eta:\bS \rightarrow \bS$ be given by $\eta(x,y)=(x,y^{n+1})$. Let $H=\BZ/(n+1)\BZ$, which acts on $\bS=\bC \times \bX$ by multiplying $\bX$ with the roots of unity of order $n+1$. As in section~\ref{realization}, to any point $(\CF_\bullet, \CF'_\bullet)\in \sE_{\ul{d},i}$ we can associate the single $H-$invariant sheaf
$$
\CH:=\eta^*\CF_{1-n} + \eta^*\CF_{2-n}(-\bD_0)+ \dots +$$ 
$$\eta^*\CF'_{i-n}(-(i-1)\bD_0)+\eta^*\CF_{i-n}(-i\bD_0)+\dots +\eta^*\CF_{0}(-n\bD_0).
$$
This gives a realization of $\sE_{\ul{d},i}$ as the moduli space of $H-$invariant sheaves on $\bS$ satisfying a certain framing at $\bD_\infty$ and certain numerical conditions. Moreover, the tangent space to this moduli space at the point $(\CF_\bullet, \CF'_\bullet)$ is given by:
$$
T_{(\CF_\bullet, \CF'_\bullet)} \sE_{\ul{d},i}= \textrm{Ext}^1_H(\CH, \CH(-\bD_\infty)).
$$
Just like in the proof of Proposition~\ref{E bundle}, the corresponding $\textrm{Hom}$ and $\textrm{Ext}^2$ spaces vanish, and therefore we have
$$
T_{(\CF_\bullet, \CF'_\bullet)} \sE_{\ul{d},i}=-\chi_H(\CH, \CH(-\bD_\infty)).
$$
We want now to compute the character of $\widetilde{T}\times \BC^*\times \BC^*$ in this tangent space, when $(\CF_\bullet,\CF'_\bullet) \in \sE_{\ul{d},i}$ is a torus fixed point. Recall that $\CF_\bullet, \CF'_\bullet$ are given by collections of indices $\widetilde{\ul{d}},\widetilde{\ul{d'}}$, and that their components break up into direct sums as in \eqref{swim}. Then we can use Lemma~\ref{torus char 0} to compute the torus character in $\chi(\CH, \CH(-\bD_\infty))$. To obtain the $H-$invariant part of this, we keep only those terms of the form ${q'}^{(n+1)x}\cdot \dots$ and replace them with ${q'}^x\cdot \dots$. Carrying this computation through, we obtain:

\begin{prop} \label{character to correspondence}
The torus character in the tangent space to $\sE_{\ul{d},i}$ above the torus fixed point given by indices $\widetilde{\ul{d}},\widetilde{\ul{d'}}$ equals
$$
\sum_{k=1}^n \sum_{l\leq k}^{l'\leq k-1} \frac{t^2_l}{t^2_{l'}}\cdot q\frac {({q}^{d_{(k-1)l'}}-1)({q}^{-d_{kl}}-1)}{q-1}\cdot {q'}^{\lfloor \frac {-l'}n\rfloor-\lfloor \frac {-l}n\rfloor}+$$
$$+\sum_{k=1}^n \sum_{l'\leq k-1} \frac{t^2_k}{t^2_{l'}}\cdot q\frac {{q}^{d_{(k-1)l'}}-1}{q-1}\cdot {q'}^{\lfloor \frac {-l'}n\rfloor-\lfloor \frac {-k}n\rfloor}-
$$
$$
-\sum_{k=1}^n \sum_{l\leq k}^{l'\leq k}\frac{t^2_l}{t^2_{l'}}\cdot q\frac {({q}^{d_{kl'}}-1)({q}^{-d_{kl}}-1)}{q-1}\cdot {q'}^{\lfloor \frac {-l'}n\rfloor-\lfloor \frac {-l}n\rfloor}-$$
$$-\sum_{k=1}^n \sum_{l\leq k} \frac{t^2_l}{t^2_k}\cdot q\frac {{q}^{-d_{kl}}-1}{q-1}\cdot {q'}^{\lfloor \frac {-k}n\rfloor-\lfloor \frac {-l}n\rfloor}+
$$
$$
+q-{q}^{-d_{ij}+d_{(i-1)j}}+\frac{t^2_j}{t^2_i}\cdot {q}^{-d_{ij}+d_{ii}}\cdot {q'}^{\lfloor \frac {-i}n\rfloor-\lfloor \frac {-j}n\rfloor}+$$
$$+\sum_{j\neq k\leq i-1}\frac{t^2_j}{t^2_k}\cdot {q}^{-d_{ij}}\cdot ({q}^{d_{ik}}-{q}^{d_{(i-1)k}})\cdot {q'}^{\lfloor \frac {-k}n\rfloor-\lfloor \frac {-j}n\rfloor}
$$
if $d'_{ij}=d_{ij}+1$ for certain $j\leq i$.
\end{prop}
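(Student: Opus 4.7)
The strategy is the one used already for Proposition~\ref{torus char E}: realize the tangent space as an equivariant $\chi$--group, decompose the relevant sheaf as a sum of rank one pieces, apply Lemma~\ref{torus char 0} to every pair, and finally extract the $H$-invariant part. Concretely, I would start from the formula
$$T_{(\CF_\bullet,\CF'_\bullet)}\sE_{\ul{d},i}=-\chi_H(\CH,\CH(-\bD_\infty))$$
which is justified by the same vanishing argument as in Proposition~\ref{E bundle}: both $\textrm{Hom}$ and $\textrm{Ext}^2$ vanish because the framing at $\bD_\infty$ forces matching first Chern classes and Serre duality then eliminates the $\textrm{Ext}^2$.

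Next, at a torus fixed point labelled by $(\widetilde{\ul{d}},\widetilde{\ul{d'}})$, I would use the splitting~\eqref{swim} for each of the pieces $\CF_{k-n}$ and for the extra piece $\CF'_{i-n}$ that enters the construction of $\CH$. This gives an explicit decomposition of $\CH$ as a direct sum of rank one sheaves of the form $J_{\lambda^{kl}}(-\alpha\bD_0)w_l$ with explicit exponents $\alpha$ determined by which layer of $\CH$ the summand comes from and by the extra shift $\eta^*(-(l-1)\bD_0)$. Plugging these into $\chi(\CH,\CH(-\bD_\infty))$ and applying Lemma~\ref{torus char 0} term by term, one gets a double sum of explicit characters of $\widetilde{T}\times\BC^*\times\BC^*$.

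The $H$-invariant part is then extracted exactly as in the proof of Proposition~\ref{torus char E}: one keeps only the monomials in which $q'$ appears with exponent divisible by $n+1$, and replaces $q'^{(n+1)x}$ by $q'^x$. For all the pairs of summands that do not involve $\CF'_{i-n}$, the answer is identical to the one coming from $\chi_G(\tilde{\CF},\tilde{\CF}(-\bD_\infty))$ with $\widetilde{\ul{d}}=\widetilde{\ul{d'}}$, so these contribute the first two lines of the claimed formula (with $d'_{ij}$ replaced by $d_{ij}$ everywhere in the non-$i$-th positions). Once signs are flipped (remember the overall minus sign in front of $\chi_H$), this is exactly what appears in the statement.

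The only nontrivial part of the bookkeeping, and the one I expect to be the main obstacle, is to track the additional contributions coming from cross terms in which the summand $\eta^*\CF'_{i-n}(-(i-1)\bD_0)$ pairs either with itself or with one of the $\eta^*\CF_{k-n}(-k\bD_0)$ pieces. These cross terms produce the four ``correction'' monomials
$$q-q^{-d_{ij}+d_{(i-1)j}}+\tfrac{t_j^2}{t_i^2}q^{-d_{ij}+d_{ii}}q'^{\lfloor -i/n\rfloor-\lfloor -j/n\rfloor}+\sum_{j\ne k\le i-1}\tfrac{t_j^2}{t_k^2}q^{-d_{ij}}(q^{d_{ik}}-q^{d_{(i-1)k}})q'^{\lfloor -k/n\rfloor-\lfloor -j/n\rfloor}$$
appearing on the last line of the proposition. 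The crucial point is that after the $H$-invariance cut the new term in the arithmetic of exponents is the shift from $n$ to $n+1$ in the degree of the cover, which is absorbed into the single extra layer of $\CH$; once this is correctly accounted for, the four correction monomials arise from (i) the self-pairing of the new summand (giving the constant $q$), (ii) the pairings of the new summand with the old $\CF_{i-n}$ at the same index $i$ (the $q^{-d_{ij}+d_{(i-1)j}}$ and $q^{-d_{ij}+d_{ii}}$ terms), and (iii) the pairings with the neighbours at levels $k\ne i$ (the final sum). The rest is routine algebraic manipulation, analogous to the proofs of Lemma~\ref{torus char 0} and Proposition~\ref{torus char E}. Combining everything proves the formula. \qed
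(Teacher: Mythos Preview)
Your proposal is correct and follows essentially the same route as the paper: realize the tangent space as $-\chi_H(\CH,\CH(-\bD_\infty))$, split $\CH$ into rank one summands via~\eqref{swim}, apply Lemma~\ref{torus char 0} to each pair, and extract the $H$-invariant part by keeping only the monomials with $q'^{(n+1)x}$ and replacing them by $q'^x$. The paper in fact gives no more detail than this, simply saying ``carrying this computation through, we obtain'' the stated formula; your identification of the last line as the correction coming from the extra layer $\eta^*\CF'_{i-n}(-(i-1)\bD_0)$ is a helpful refinement of what the paper leaves implicit.

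One small caution on the phrasing: your sentence ``for all the pairs of summands that do not involve $\CF'_{i-n}$, the answer is identical to the one coming from $\chi_G(\tilde\CF,\tilde\CF(-\bD_\infty))$'' is correct as a statement about the final expressions, but the mechanism is not literally term-by-term identification, since the cover degree has changed from $n$ to $n+1$ and the $\bD_0$-twists are shifted accordingly. The agreement of the first two lines with Proposition~\ref{torus char E} (at $\widetilde{\ul{d}}=\widetilde{\ul{d}}{}'$) only emerges after the $H$-invariant extraction has been performed and the answer rewritten in terms of the $d_{kl}$; it is not that the individual $\chi$-contributions coincide before extraction. This does not affect the validity of the argument, only the heuristic you gave for it.
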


\subsection{Proof of Theorem~\ref{affeigin}}
\label{310}
Proposition~\ref{character to correspondence}
enables us to prove Theorem~\ref{affeigin}.
Indeed, by Theorem 3.7 in \cite{neg}, we have
\begin{equation}
\label{ratiu}
\fe_i [\widetilde{\ul{d}}]=-\bq_*\bp^*[\widetilde{\ul{d}}]=-\sum_{\widetilde{\ul{d}'}} [\widetilde{\ul{d}'}]\cdot \frac {\dsp \prod_{w\in T_{\widetilde{\ul{d}}}\CP_{\ul{d}}} w}{\dsp \prod_{w\in T_{(\widetilde{\ul{d}},\widetilde{\ul{d}'} )}\sE_{\ul{d},i}} w},
\end{equation}
where the notation $\prod_{w\in T_{\dots}}$ denotes the product of the torus weights in the respective tangent spaces. The sum above runs over all $\widetilde{\ul{d}'}$ such that $d'_{ij}=d_{ij}+1$ for a certain $j$. The coefficient $\fe_{i[\widetilde{\ul{d}},\widetilde{\ul{d}'}]}$ is precisely the ratio of products of weights in the above expression. To compute this ratio, note that Remark~\ref{tangent character} and Proposition~\ref{character to correspondence} imply that the character in $T_{\widetilde{\ul{d}}}\CP_{\ul{d}}$ minus the character in $T_{(\widetilde{\ul{d}},\widetilde{\ul{d}'} )}\sE_{\ul{d},i}$ is precisely
$$
-q+{q}^{-d_{ij}+d_{(i-1)j}}-\frac{t^2_j}{t^2_i}\cdot {q}^{-d_{ij}+d_{ii}}\cdot {q'}^{\lfloor \frac {-i}n\rfloor-\lfloor \frac {-j}n\rfloor}+$$
$$\sum_{j\neq k\leq i-1}\frac{t^2_j}{t^2_k}\cdot ({q}^{d_{(i-1)k}-d_{ij}}-{q}^{d_{ik}-d_{ij}})\cdot {q'}^{\lfloor \frac {-k}n\rfloor-\lfloor \frac {-j}n\rfloor}
$$
The above expression is a sum of terms of the form $\pm e^w$, where $w$ runs over the weights in the ratio we want to compute. This easily shows that the matrix coefficient of $\fe_i$ has the form claimed in Theorem~\ref{affeigin}. The matrix coefficients of $\ff_i$ are computed in the same way, and thus Theorem~\ref{affeigin} is proved.

\section{Cohomology ring of $\fM_{n,d}$}

\subsection{Giesecker moduli space} 
$\fM_{n,d}$ is the moduli space of torsion free sheaves
on $\bC\times\bX$ of rank $n$ and second Chern class $d$, trivialized at
$\bC\times\infty_\bX\cup\infty_\bC\times\bX$ (see~\cite{nak},~section~2).
We have an evident morphism
$\eta:\ \CP_{\ul{d}}\to\fM_{n,d_0}$ (forgetting the flag). It induces the
morphism on cohomology
$\eta^*:\ H^\bullet_{\widetilde{T}\times\BC^*\times\BC^*}(\fM_{n,d_0})\to
H^\bullet_{\widetilde{T}\times\BC^*\times\BC^*}(\CP_{\ul{d}})$.

\begin{lem}
\label{imaginary}
Assume ${\ul{d}}=(d,\ldots,d)$. Then
$\eta^*:\ H^\bullet_{\widetilde{T}\times\BC^*\times\BC^*}(\fM_{n,d})\to
H^\bullet_{\widetilde{T}\times\BC^*\times\BC^*}(\CP_{d,\ldots,d})$ is an
embedding.
\end{lem}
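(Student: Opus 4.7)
The plan is to use equivariant localization with respect to the action of $\widetilde{T}\times\BC^*\times\BC^*$. Both $\fM_{n,d}$ and $\CP_{d,\ldots,d}$ are smooth with finitely many torus fixed points and admit Bialynicki-Birula decompositions into affine cells compatible with this action. Consequently, their equivariant cohomology rings are free modules over $H^\bullet_{\widetilde{T}\times\BC^*\times\BC^*}(pt)$ and embed into their localizations with respect to $\on{Frac}(H^\bullet_{\widetilde{T}\times\BC^*\times\BC^*}(pt))$. It will therefore suffice to prove that $\eta^*$ is injective after passing to these localizations.

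Next, I would identify the map $\eta$ on fixed point sets. The fixed points of $\fM_{n,d}$ are parametrized by $n$-tuples of partitions $\mu=(\mu^1,\ldots,\mu^n)$ with $\sum_l|\mu^l|=d$, corresponding to the framed sheaves $\bigoplus_l J_{\mu^l}w_l$. The fixed points of $\CP_{d,\ldots,d}$ are the collections $\blambda=(\lambda^{kl})$ satisfying~(\ref{pool}) with $d_k(\blambda)=d$ for all $k$. Reading off $\CF_0$ from~(\ref{swim}) with $k=n$, one sees that $\eta$ sends $\blambda$ to $(\lambda^{n1},\ldots,\lambda^{nn})$. I claim this map is surjective: given any $\mu$ as above, the collection defined by $\lambda^{kl}:=\mu^l$ for all $k,l$ lies in the fiber over $\mu$. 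Indeed, every strict containment in~(\ref{pool}) becomes an equality, the cyclic conditions $\lambda^{nl}\widetilde{\supset}\lambda^{ll}$ reduce to $\mu^l_i\geq\mu^l_{i+1}$ (automatic for partitions), and $d_k(\blambda)=\sum_l|\mu^l|=d$ for every $k$.

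With surjectivity in hand, the standard localization formula gives
$$\eta^*[\mu]=\sum_{\blambda:\,\eta(\blambda)=\mu}\frac{e(T_\mu\fM_{n,d})}{e(T_\blambda\CP_{d,\ldots,d})}\,[\blambda]$$
in the localized cohomology of $\CP_{d,\ldots,d}$, where $[\mu]$ and $[\blambda]$ are the fundamental classes of the respective fixed points. Each Euler class is a nonzero element of the fraction field since the torus acts with nonzero weights on the tangent spaces at isolated fixed points. Moreover the fibers of $\eta$ over distinct $\mu$ are disjoint, so the classes $\{\eta^*[\mu]\}$ involve pairwise disjoint sets of basis vectors $[\blambda]$ with nonzero coefficients, and are therefore linearly independent over $\on{Frac}(H^\bullet_{\widetilde{T}\times\BC^*\times\BC^*}(pt))$. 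Since $\{[\mu]\}$ is a basis of the localized cohomology of $\fM_{n,d}$, injectivity of $\eta^*$ on the localizations follows, and then on $H^\bullet_{\widetilde{T}\times\BC^*\times\BC^*}(\fM_{n,d})$ itself by torsion-freeness.

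The main content of the argument, beyond invoking standard localization machinery, is the combinatorial surjectivity check on fixed points; this is where the hypothesis $\ul{d}=(d,\ldots,d)$ enters in an essential way. For a more general $\ul{d}$, fixing $\lambda^{nl}=\mu^l$ would constrain the $\lambda^{kl}$ for $k<n$ in a manner generally incompatible with the sum conditions $d_k(\blambda)=d_k$ together with the inequalities in~(\ref{pool}), so surjectivity on fixed points (and hence this proof of injectivity of $\eta^*$) would break down.
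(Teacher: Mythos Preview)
Your proof is correct and follows the same strategy as the paper: embed both equivariant cohomology rings into their localizations, invoke the localization theorem, and reduce to the surjectivity of $\eta$ on torus-fixed points. The paper dismisses this last step as ``clear,'' whereas you spell out the explicit section $\lambda^{kl}:=\mu^l$ and verify it against~(\ref{pool}); this is a welcome elaboration but not a different approach.
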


\proof Embedding the source and the target of $\eta^*$ into their localizations
and applying the Localization Theorem, it suffices to check that $\eta$ is
surjective on the sets of torus-fixed points. This is clear. \qed

\subsection{Yangian of $\fgl_n$.}
We will view $H^\bullet_{\widetilde{T}\times\BC^*\times\BC^*}(\fM_{n,d})$
as a subring of
$H^\bullet_{\widetilde{T}\times\BC^*\times\BC^*}(\CP_{d,\ldots,d})$.
Our next task is to exhibit a subalgebra $ZY(\fgl_n)\subset\fA_{\on{aff}}$
such that
$\psi(ZY(\fgl_n))=H^\bullet_{\widetilde{T}\times\BC^*\times\BC^*}(\fM_{n,d})$.

The elements $\bh_{i,r},\ i=1,\ldots,n$,
together with $\bx^\pm_{i,r},\ i=1,\ldots,n-1$, generate a copy of
$Y(\fgl_n)$ inside $\widehat{Y}$. According to \cite{mbook}, the Fourier
components of $\ba_{0,n}(u)$ generate the center $ZY(\fgl_n)$ of $Y(\fgl_n)$.
The eigenvalue of $\ba_{0,n}(u)$ on the basis vector $[\widetilde{\ul{d}}]$
is $\prod_{j\le0}(u-p_{0j}+\hbar')(u-p_{0j})^{-1}$.

\begin{thm}
\label{fM}
Assume ${\ul{d}}=(d,\ldots,d)$. Then
$\psi(ZY(\fgl_n)\otimes\BC(\hbar,\hbar')[x_1,\ldots,x_n])=
\eta^*({}^{\on{loc}}H^\bullet_{\widetilde{T}\times\BC^*\times\BC^*}
(\fM_{n,d}))$.
\end{thm}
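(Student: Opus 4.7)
The plan is to identify both sides of the asserted equality with a common subring $R$ of ${}^{\on{loc}}H^\bullet_{\widetilde{T}\times\BC^*\times\BC^*}(\CP_{d,\ldots,d})$: namely the $\BC(\hbar,\hbar')[x_1,\ldots,x_n]$-subalgebra generated by the four K\"unneth components $c_j^{(j)}(\ul{\CF}_0),\,c_j^{(j-1)}(\ul{\CF}_0),\,c_j^{(j-1)'}(\ul{\CF}_0),\,c_j^{(j-2)}(\ul{\CF}_0)$ of the Chern classes of the tautological sheaf $\ul{\CF}_0$ on $\CP_{d,\ldots,d}\times\bS$.

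First I would establish the equality $\eta^*({}^{\on{loc}}H^\bullet_{\widetilde{T}\times\BC^*\times\BC^*}(\fM_{n,d}))=R$. The forgetful map $\eta:\CP_{d,\ldots,d}\to\fM_{n,d}$ pulls the universal rank-$n$ sheaf on $\fM_{n,d}\times\bS$ back to $\ul{\CF}_0$, so naturality of Chern classes yields the inclusion $\eta^*({}^{\on{loc}}H^\bullet(\fM_{n,d}))\subset R$. The reverse inclusion reduces to the classical fact that the equivariant cohomology of $\fM_{n,d}$ is generated by the K\"unneth components of Chern classes of its universal sheaf (Baranovsky--Nakajima); if preferred, this may be reproved by repeating the diagonal-class strategy of sections~\ref{andrei}--\ref{end0} in the $\fM_{n,d}$ setting, where the absence of a parabolic structure simplifies the calculations.

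Next I would prove $\psi(ZY(\fgl_n)\otimes\BC(\hbar,\hbar')[x_1,\ldots,x_n])=R$. The center $ZY(\fgl_n)$ is generated by the Fourier coefficients of $\ba_{0,n}(u)$, and the identity $\ba_{0,n}(u)=\ba_{-n,0}(u+\hbar'/\hbar)$ from the proof of Theorem~\ref{vara} shows that these span the same $\BC(\hbar,\hbar')$-subalgebra as the Fourier coefficients of $\ba_{-n,0}(u)$. But $\ba_{-n,0}(u)$ is by definition the generating series of the K\"unneth components $e_j^{0,0}(\ul{\CF}_0)$ of the restriction $\ul{\CF}_0|_{\CP\times\bC\times 0_\bX}=\ul{\CW}_{-n,0}$, which, via~(\ref{senile}), gives the inclusion $\psi(ZY(\fgl_n)\otimes\BC(\hbar,\hbar')[x_1,\ldots,x_n])\subset R$. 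For the reverse inclusion I would invoke the inversion argument of section~\ref{a2c}: the linear system~(\ref{marazm}) combined with the framing identities that identify $e_j^{\infty,\infty}(\ul{\CF}_0),\,e_j^{0,\infty}(\ul{\CF}_n),\,e_j^{\infty,0}(\ul{\CF}_n)$ with the $j$-th elementary symmetric polynomial in $x_1,\ldots,x_n$ is uniquely solvable over $\BC(\hbar,\hbar')[x_1,\ldots,x_n]$, recovering the four K\"unneth components of $c_j(\ul{\CF}_0)$ as polynomial expressions in the $e_j^{0,0}(\ul{\CF}_0)$ and the $x_i$'s.

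The main obstacle is the tautological generation statement for ${}^{\on{loc}}H^\bullet(\fM_{n,d})$, which is the substantive input; once it is in hand, the identification of both sides with $R$ is essentially a direct translation of section~\ref{a2c}, and combining the two equalities yields Theorem~\ref{fM}.
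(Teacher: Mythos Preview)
Your approach is essentially identical to the paper's: both reduce the claim to the tautological-generation statement for $H^\bullet(\fM_{n,d})$ (which the paper cites from Nakajima's resolution of the diagonal in~\cite{nak},~\S4) together with the inversion argument of section~\ref{a2c}. One small slip: in your first paragraph the two inclusions are swapped---naturality of Chern classes alone gives $R\subset\eta^*({}^{\on{loc}}H^\bullet(\fM_{n,d}))$ (the generators of $R$ are visibly $\eta^*$ of the K\"unneth components upstairs), whereas it is the generation theorem that yields $\eta^*({}^{\on{loc}}H^\bullet(\fM_{n,d}))\subset R$.
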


\proof Follows from the proof of Theorem~\ref{feiryb}. The fact that the
cohomology ring of $\fM_{n,d}$ is generated by the K\"unneth components of
the Chern classes of the universal bundle $\ul{\CF}$ on
$\fM_{n,d}\times\bC\times\bX$ follows from the resolution of diagonal of
$\fM_{n,d}$ constructed in~section~4 of~\cite{nak}. \qed

\subsection{Determinant line bundle}
We consider the line bundle $\D_0$
on $\CP_{\ul{d}}$ whose fiber at the point $(\CF_\bullet)$ equals
$\det R\Gamma(\bC\times\bX,\F_0)$.

\begin{lem}\label{D_0}
$\D_0$ is a $\widetilde{T}\times\BC^*\times\BC^*$-equivariant line bundle, and
the character of $\widetilde{T}\times\BC^*\times\BC^*$ acting in the fiber of
$\D_0$ at a point $\widetilde{\ul{d}}=(d_{ij})$ equals
$\sum_{j=1}^{n}x_j(1-\sum_{k=j\mod n} d_{0k})+ \frac{1}{2}\sum_{j\le0} d_{0j}(d_{0j}-1)\hbar+\sum_{j\le0} d_{0j}\lfloor\frac{-j}{n}\rfloor\hbar'$.
\end{lem}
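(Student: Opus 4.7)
The plan is to establish equivariance and then compute the character at a fixed point $\widetilde{\ul d}$ using the explicit decomposition of $\F_0$ from~(\ref{swim}).

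Equivariance is formal: the universal parabolic sheaf $\F_0$ on $\CP_{\ul d}\times\bS$ carries a canonical $\widetilde T\times\BC^*\times\BC^*$-equivariant structure, and since $\bS$ is projective the pushforward $R\pi_*\F_0$ (with $\pi:\CP_{\ul d}\times\bS\to\CP_{\ul d}$ the first projection) is an equivariant perfect complex whose determinant is $\D_0$.

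To compute the character at a fixed point indexed by $\widetilde{\ul d}=(d_{ij})$, I use Lemma~\ref{evi} and formula~(\ref{swim}) with $k=n$ to write
\[
\F_0=\bigoplus_{l=1}^n J_{\lambda^{nl}}\,w_l,\qquad R\Gamma(\bS,\F_0)=\bigoplus_{l=1}^n R\Gamma(\bS,J_{\lambda^{nl}})\otimes\BC w_l.
\]
The weight of $\D_0|_{\widetilde{\ul d}}$ is then the sum of all equivariant weights of this virtual representation (with the appropriate virtual signs), converted to additive form via $q\leftrightarrow\hbar$, $q'\leftrightarrow\hbar'$, $t_l^2\leftrightarrow x_l$. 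For a partition $\mu$, the short exact sequence $0\to J_\mu\to\O_\bS\to\O_{Z_\mu}\to 0$ gives in equivariant $K$-theory
\[
\chi(\bS,J_\mu)=1-\sum_{i\ge 0}\sum_{j=0}^{\mu_i-1}q^{-j}(q')^{-i},
\]
since the weights of $z$ and $y$ are $q^{-1}$ and $(q')^{-1}$ respectively. Summing all weights and adding the shift by $x_l$ coming from $\BC w_l$ yields
\[
\mathrm{char}(\D_0|_{\widetilde{\ul d}})=\sum_{l=1}^n\Bigl[(1-|\lambda^{nl}|)x_l+\hbar\sum_{i\ge 0}\tbinom{\lambda^{nl}_i}{2}+\hbar'\sum_{i\ge 0}i\cdot\lambda^{nl}_i\Bigr].
\]

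The remaining step is combinatorial. Using $d_{ij}=\lambda^{j\bmod n}_{i-j}$ from subsection~\ref{realization} together with the periodicity $d_{i+n,j+n}=d_{ij}$, one checks that the map $(l,i)\mapsto k$ given by $k=l-n(i+1)$ for $1\le l<n$ and $k=-ni$ for $l=n$ is a bijection between $\{(l,i):1\le l\le n,\ i\ge 0\}$ and $\{k\in\BZ:k\le 0\}$, satisfying $\lambda^{nl}_i=d_{0k}$, $l\equiv k\pmod n$, and $i=\lfloor -k/n\rfloor$. Substituting this into the three sums above produces exactly the three terms appearing in the lemma. The main obstacle is the bookkeeping between the several layers of indexing ($\lambda^{nl}$, $\lambda^l$, $d_{ij}$); once this dictionary is in place, each summand in the formula of the lemma matches term by term.
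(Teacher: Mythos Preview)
Your proof is correct and is precisely the ``straightforward'' computation the paper has in mind: decompose $\F_0$ at the fixed point via~(\ref{swim}) with $k=n$, compute the equivariant Euler characteristic of each $J_{\lambda^{nl}}$ from the short exact sequence $0\to J_\mu\to\O_\bS\to\O_{Z_\mu}\to 0$, sum the weights to get the determinant character, and then translate from the $\lambda^{nl}$ indexing to the $d_{0k}$ indexing using~(\ref{relation lambdas}) and~(\ref{relation d lambda}). The bijection you wrote down between $(l,i)$ and $k\le 0$ is exactly right, and each of the three summands matches as you claim.
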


\begin{proof}
Straightforward.
\end{proof}

Let $\Phi_n(u)=\sum_{r\ge0}\Phi_{n,r}u^{-r-1}=\partial_u\log\ba_{0,n}(u)$.
This is the {\em noncommutative power sum of the second kind} of Gelfand et al.
(see~\cite{gel} and~\cite{mbook}~7.3 and~Corollary~1.11.8).
Let $N$ be the minimal number such that $d_{0N}\ne0$. The eigenvalue of $\Phi_n(u)$ on $[\widetilde{\ul{d}}]$ then reads \begin{multline*}\sum_{j\le0}(u-p_{0j}+\hbar')^{-1}-(u-p_{0j})^{-1}=\\=\sum_{N\le j\le0}-\hbar'u^{-2}(1-(p_{0j}-\hbar')u^{-1})^{-1}(1-p_{0j}u^{-1})^{-1}+\\ 
+\sum_{N-n\le j\le N-1} u^{-1}(1-(p_{0j}-\hbar')u^{-1})^{-1}.
\end{multline*}

In particular, we have $$\Phi_{n,1}=-\sum_{j=1}^{n} x_j-n\hbar',$$
$$\Phi_{n,2}=\sum_{j=1}^{n} (x_j+\hbar')^2-2\hbar\hbar'\sum d_{0j},$$
and
$$\Phi_{n,3}=-\sum_{j=1}^{n} (x_j+\hbar')^3 + 3\hbar\hbar'(2\sum_{j=1}^{n}x_j
\sum_{k=j\mod n} d_{0k}- \sum d_{0j}^2\hbar-\sum d_{0j}
(2\lfloor\frac{-j}{n}\rfloor-1)\hbar').$$

\begin{thm}
\label{xvi}
The operator of multiplication by the first Chern class of the
determinant line bundle $\D_0$ acts as
$$\frac{-2\Phi_{n,3}+3(\hbar-\hbar')
\Phi_{n,2}-2\sum_{j=1}^{n} (x_j+\hbar')^3-3(\hbar-\hbar')\sum_{j=1}^{n}
(x_j+\hbar')^2}{12\hbar\hbar'}+\sum_{j=1}^{n} x_j.$$
\end{thm}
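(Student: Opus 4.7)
The plan is to prove the theorem by showing that the two operators in question agree eigenvalue-by-eigenvalue in the fixed-point basis $\{[\widetilde{\ul{d}}]\}$ of $M_{\ul{d}}$. Since $c_1(\D_0)$ acts as multiplication in equivariant cohomology, and the Thomason localization theorem identifies $M_{\ul{d}}$ with the span of the fundamental cycles of the torus-fixed points, the operator of multiplication by $c_1(\D_0)$ is diagonal in this basis. Its eigenvalue on $[\widetilde{\ul{d}}]$ equals the character of $\widetilde{T}\times\BC^*\times\BC^*$ in the fiber of $\D_0$ over $\widetilde{\ul{d}}$, which has been explicitly computed in the lemma preceding the theorem. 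So it suffices to show that the right-hand side of the theorem, viewed as an operator on $M_{\ul{d}}$, acts diagonally on $[\widetilde{\ul{d}}]$ with the same eigenvalue.

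To obtain the eigenvalue of the right-hand side, I would use the eigenvalue of $\Phi_n(u)=\partial_u\log\ba_{0,n}(u)$ on $[\widetilde{\ul{d}}]$ derived just before the theorem statement, together with the resulting closed expressions for $\Phi_{n,2}$ and $\Phi_{n,3}$ given there. These expressions naturally split into a ``scalar part'' that depends only on $x_j,\hbar,\hbar'$ (coming from the sums $\sum(x_j+\hbar')^k$) and a ``combinatorial part'' that is polynomial in the entries $d_{0j}$ of $\widetilde{\ul{d}}$. The subtraction of $2\sum(x_j+\hbar')^3+3(\hbar-\hbar')\sum(x_j+\hbar')^2$ inside the bracket is precisely engineered so that the scalar parts of $-2\Phi_{n,3}+3(\hbar-\hbar')\Phi_{n,2}$ are exactly killed.

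After that cancellation, what remains in the bracket is $6\hbar\hbar'$ times an explicit linear combination of $\sum x_j\sum_{k\equiv j}d_{0k}$, $\sum d_{0j}^2$, $\sum d_{0j}\lfloor -j/n\rfloor$, and $\sum d_{0j}$. Dividing by $12\hbar\hbar'$ and adding the $+\sum x_j$ tail, a short algebraic manipulation combines the $\hbar$-terms into $\tfrac12\hbar\sum d_{0j}(d_{0j}-1)$, the $\hbar'$-terms into $\hbar'\sum d_{0j}\lfloor -j/n\rfloor$, and the $x_j$-terms into $\sum_j x_j(1-\sum_{k\equiv j\,(n)}d_{0k})$. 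This is exactly the character formula from the lemma, concluding the argument.

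The step requiring the most care is this final algebraic reconciliation: one must correctly keep track of the index substitution $j\mapsto (j\bmod n)$ appearing in $\Phi_{n,3}$, the contribution of the ``constant'' term $\sum d_{0j}$ inherited from the $(2\lfloor -j/n\rfloor-1)$ factor versus the $(\hbar-\hbar')$ factor, and the signs of $\hbar$ and $\hbar'$ in the two kinds of weight contributions. There is no conceptual obstacle beyond this bookkeeping: everything reduces to explicit computations with the already known eigenvalues, and the diagonality of both operators in $\{[\widetilde{\ul{d}}]\}$ makes the problem purely combinatorial.
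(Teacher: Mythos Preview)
Your proposal is correct and is precisely what the paper means by ``Straightforward'': both sides are diagonal in the fixed-point basis, and one simply substitutes the already-computed eigenvalues of $\Phi_{n,2}$, $\Phi_{n,3}$ and the character formula from the preceding lemma, then checks the algebra. Your sketch of the cancellation of the scalar parts and the regrouping of the remaining terms into $\sum x_j(1-\sum_{k\equiv j}d_{0k})+\tfrac{\hbar}{2}\sum d_{0j}(d_{0j}-1)+\hbar'\sum d_{0j}\lfloor-j/n\rfloor$ is exactly right.
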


\proof Due to Lemma~\ref{D_0}, the eigenvalue of $c_1(\D_0)$ on $[\widetilde{\ul{d}}]$ reads
\begin{multline*}
\sum_{j=1}^{n}x_j(1-\sum_{k=j\mod n} d_{0k})+ \frac{1}{2}\sum_{j\le0} d_{0j}(d_{0j}-1)\hbar+\sum_{j\le0} d_{0j}\lfloor\frac{-j}{n}\rfloor\hbar'=\sum_{j=1}^{n} x_j-\\
-\frac{1}{2}\left(2\sum_{j=1}^{n}x_j
\sum_{k=j\mod n} d_{0k}- \sum d_{0j}^2\hbar-\sum d_{0j}
2\lfloor\frac{-j}{n}\rfloor\hbar'+\sum d_{0j}(\hbar-\hbar')\right)=\\
=\frac{-2\Phi_{n,3}+3(\hbar-\hbar')
\Phi_{n,2}-2\sum_{j=1}^{n} (x_j+\hbar')^3-3(\hbar-\hbar')\sum_{j=1}^{n}
(x_j+\hbar')^2}{12\hbar\hbar'}+\sum_{j=1}^{n} x_j.
\end{multline*}
\qed

\end{document}